\theoremstyle{plain}
\newtheorem{thm}{Theorem}[section]
\newtheorem{lem}[thm]{Lemma}
\newtheorem{prop}[thm]{Proposition}
\newtheorem{theoremalph}{Theorem}
\newtheorem{prob}[thm]{Problem}
\newtheorem{claim}[thm]{Claim}
\theoremstyle{definition}
\newtheorem{exl}[thm]{Example}
\newtheorem{defn}[thm]{Definition}
\newtheorem{cons}[thm]{Construction}
\newtheorem{remark}[thm]{Remark}
\newtheorem{notation}[thm]{Notation}
\DeclareMathOperator{\N}{\mathbb{N}}
\DeclareMathOperator{\Z}{\mathbb{Z}}
\DeclareMathOperator{\Q}{\mathbb{Q}}
\DeclareMathOperator{\ssm}{\smallsetminus}
\DeclareMathOperator{\Sg}{\Sigma}
\DeclareMathOperator{\GL}{GL}
\DeclareMathOperator{\Imm}{Im}
\DeclareMathOperator{\Hom}{Hom}
\DeclareMathOperator{\Span}{Span}
\DeclareMathOperator{\coker}{coker}
\DeclareMathOperator{\gr}{g-rk}
\DeclareMathOperator{\Id}{Id}
\DeclareMathOperator{\A}{\mathcal{A}}
\DeclareMathOperator{\Tor}{Tor}
\newcommand{\sm}{\setminus}
\newcommand{\ol}{\overline}
\newcommand{\wt}{\widetilde}
\title{Stabilization distance between surfaces}
\author{Allison N.\ Miller}
\address{Department of Mathematics, Rice University, Houston, TX, United States}
\email{allison.miller@rice.edu}
\author{Mark Powell}
\address{Department of Mathematical Sciences, Durham University, United Kingdom}
\email{mark.a.powell@durham.ac.uk}
\subjclass[2010]{57N13, 57N65}
\keywords{2-knots, slice discs, stabilization distance, twisted homology}
\begin{document}

\begin{abstract}
Define the \emph{1-handle stabilization distance} between two surfaces properly embedded in a fixed 4-dimensional manifold to be the minimal number of 1-handle stabilizations necessary for the surfaces to become ambiently isotopic.
For every nonnegative integer $m$ we find a pair of 2-knots in the 4-sphere whose stabilization distance equals~$m$.

Next, using a \emph{generalized stabilization distance} that counts connected sum with arbitrary 2-knots as distance zero, for every nonnegative integer $m$ we exhibit a knot $J_m$ in the 3-sphere with two slice discs in the 4-ball whose generalized stabilization distance equals~$m$. We show this using homology of cyclic covers.

Finally, we use metabelian twisted homology to show that for each~$m$ there exists a knot and pair of slice discs with generalized stabilization distance at least~$m$, with the additional property that abelian invariants associated to cyclic covering spaces coincide. This detects different choices of slicing discs corresponding to a fixed metabolising link on a Seifert surface.
\end{abstract}

\maketitle

\section{Introduction}

Given a compact, smooth, oriented 4-manifold $W$, every second homology class can be represented by some embedded surface~\cite[Prop.~1.2.3]{Gompf-stipsicz-book}.
A simple operation called \emph{1-handle stabilization}, illustrated in 3-dimensional space in Figure~\ref{fig:stabpic}, preserves the homology class represented by a surface while increasing the genus by one. Roughly, a 1-handle stabilization removes $D^2 \times S^0$ from $\Sigma$ and glues in $S^1 \times D^1$, with some conditions that allow this to occur ambiently in $W$ in a controlled way (see Section~\ref{subsection:stab-distances} for formal definitions).
\begin{figure}[h]
$
\begin{array}[t]{ccc}
\begin{array}{c} \includegraphics[height=1.8cm]{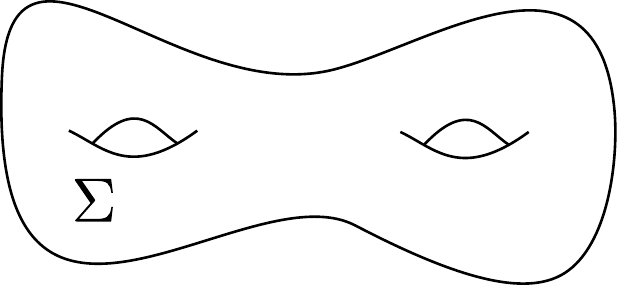} \end{array}
&
\begin{array}{c} \rightsquigarrow \end{array}
&
\begin{array}{c} \includegraphics[height=1.8cm]{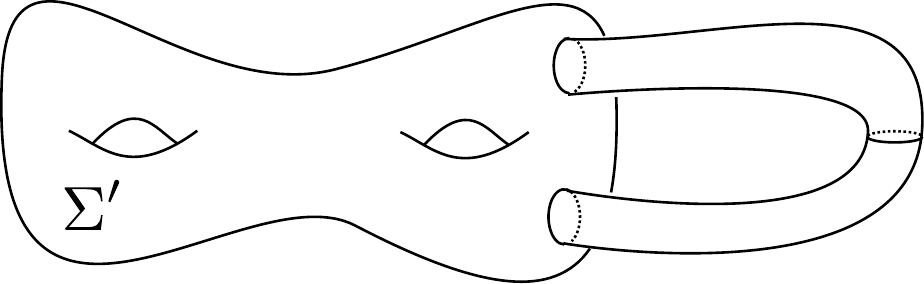} \end{array}
\end{array}$
\caption{An embedded surface $\Sigma$  (left) is stabilized by addition of a 1-handle, resulting in $\Sigma'$ (right).}\label{fig:stabpic}
\end{figure}
A result of Baykur-Sunukjian~\cite{Bay-Sun-15} states that any two embedded surfaces in $W$ representing the same second homology class become isotopic after finitely many  1-handle stabilizations.

In this paper, we  analyze the minimal number of 1-handle additions required to make two surfaces with the same genera isotopic. We call this the \emph{1-handle stabilization distance}, and show that it induces a metric on the collection of ambient isotopy classes of surfaces of a fixed genus representing a given second homology class.
There are many invariants capable of distinguishing  two surfaces up to ambient isotopy, thereby showing that at least one 1-handle addition is required, but it is more challenging to find more substantial lower bounds on the number of 1-handles needed.

Our first result shows that, even in the simplest possible setting of necessarily null-homologous 2-spheres in $S^4$, the 1-handle stabilization distance can be arbitrarily large.

\begin{theoremalph}\label{thm:A}
  For every nonnegative integer $m$, there exists a pair of embedded 2-spheres $K_1$ and $K_2$ in $S^4$ with 1-handle stabilization distance $m$.
\end{theoremalph}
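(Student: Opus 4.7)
My plan is to prove Theorem A by producing, for each $m$, an explicit pair of 2-knots $K_1, K_2 \subset S^4$ together with (i) a construction realising $1$-handle stabilization distance at most $m$, and (ii) an algebraic invariant of the cyclic cover that forces the distance to be at least $m$. The natural candidate invariant is the minimal number of $\Z[t^{\pm 1}]$-generators of the Alexander module $H_1(X_K^\infty; \Z[t^{\pm 1}])$, where $X_K^\infty$ denotes the infinite cyclic cover of the exterior of $K$ (extended to arbitrary-genus surface exteriors so that the invariant behaves predictably under stabilization); passing to $\F_p[t^{\pm 1}]$ coefficients for a well-chosen prime $p$ may simplify the algebra.

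For the construction I would take $K_1$ to be the unknotted 2-sphere and $K_2$ to be the $m$-fold connected sum $\#^m J$, where $J$ is a fixed nontrivial 2-knot chosen so that (a) $J$ becomes isotopic to the unknot after a single 1-handle stabilization (for example, a suitable ribbon 2-knot or twist-spin), and (b) the Alexander module of $J$ is a prescribed cyclic module, say $\Z[t^{\pm 1}]/(p(t))$ for some well-chosen polynomial $p$. For the upper bound one stabilizes each summand of $\#^m J$ once, producing an unknotted genus-$m$ surface, and stabilizes $K_1$ trivially $m$ times to obtain the same standard unknotted genus-$m$ surface. Both are then ambiently isotopic, giving distance at most $m$.

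The lower bound is the crux. By additivity of the Alexander module under connected sum, the module of $\#^m J$ is $\bigl(\Z[t^{\pm 1}]/(p(t))\bigr)^m$, which has minimal generating set of size exactly $m$, while that of $K_1$ has generating rank zero. The key technical lemma is that a single 1-handle stabilization changes this generator rank by at most one. The geometric content is that attaching a 1-handle along an arc in the exterior modifies the fundamental group of the exterior by at most one additional generator (a meridian of the new tube) and one relation, and then a Mayer--Vietoris/Fox-calculus analysis in the infinite cyclic cover shows that the minimal size of a generating set for $H_1$ can change by at most one. Iterating, fewer than $m$ stabilizations cannot equate the two generator ranks, hence the distance is at least $m$.

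The main obstacle is proving the "generator rank changes by at most one" lemma in the generality needed: the invariant must be defined for exteriors of embedded surfaces of arbitrary genus (not just 2-knots), and one must verify that the meridian is primitive and that the Mayer--Vietoris computation for attaching the 1-handle really increases the minimal generator count by no more than one, even accounting for the effect of the $\Z[t^{\pm 1}]$-action. A secondary technical point is producing a 2-knot $J$ that simultaneously satisfies the stabilization-to-unknot property and the prescribed Alexander module, and verifying additivity of Alexander modules under connected sum of 2-knots with the expected independence of summands. Once these pieces are assembled, combining the upper and lower bounds gives 1-handle stabilization distance exactly $m$.
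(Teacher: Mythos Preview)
Your proposal is correct and follows essentially the same strategy as the paper: take $K_1$ unknotted and $K_2=\#^m J$ for a 2-knot $J$ that unknots after one stabilization and has nontrivial cyclic Alexander module, then use the generating rank of the Alexander module (over the PID $\Q[t^{\pm1}]$ in the paper) together with the fact that a single stabilization can drop this rank by at most one. The paper's execution of the key lemma is slightly cleaner than your Fox-calculus sketch: it builds a relative cobordism $X_T$ between $X_{F_1}$ and $X_{F_2}$ consisting of a single 5-dimensional 2-handle (dually a 3-handle), yielding directly a short exact sequence $0\to \Q[t^{\pm1}]/\langle p\rangle \to H_1(X_{F_1};\Q[t^{\pm1}])\to H_1(X_{F_2};\Q[t^{\pm1}])\to 0$; your ``one generator and one relation'' description of the exterior change is not quite the right picture, but the cobordism argument gives exactly the inequality you need.
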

We prove Theorem~\ref{thm:A} by analyzing the effect of 1-handle stabilization on the Alexander module of a surface in $S^4$.
Recall that the first Alexander module $H_1(S^{n+2} \sm \nu K;\Q[t^{\pm 1}])$ is a classical invariant of an embedded $n$-sphere $K$ in $S^{n+2}$ that measures the homology of the infinite cyclic cover of the exterior of $K$, considered as a $\Q[t^{\pm1}]$-module.
In the case of $n=1$, the order of this $\Q[t^{\pm1}]$-module is exactly the classical Alexander polynomial $\Delta_K(t)$.

\emph{Added in proof.} It was brought to our attention immediately prior to publication that Theorem~\ref{thm:A} was already proven using a similar method by Miyazaki~\cite{Miyazaki-86}.
We leave our treatment of the theorem in the paper since its primary purpose is to contrast with the upcoming theorems and their proofs, which capture more subtle phenomena.
\\

In addition to 1-handle stabilization, one might also wish to allow connected sum with arbitrary knotted 2-spheres, also called \emph{2-knots}. In the context of Theorem~\ref{thm:A} this is uninteresting: any two 2-knots become isotopic with zero 1-handle additions and a single 2-sphere addition to each. However, when considering properly embedded discs in $D^4$ with fixed boundary we show that the resulting \emph{generalized stabilization distance}, in which 1-handle addition counts as 1 and 2-sphere addition counts as 0, has similarly interesting properties.
 In particular, the generalized stabilization distance between properly embedded discs in $D^4$ with fixed boundary can be arbitrarily large.
 More precisely, a \emph{slice disc} for a 1-knot $J \subset S^3$ is a smoothly properly embedded disc $D^2 \subset D^4$ with boundary the knot $J$, and we prove the following.
\begin{theoremalph}\label{thm:B}
  For every nonnegative integer $m$, there exists a knot $J \subset S^3$ and a pair of slice discs $D_1$ and $D_2$ for $J$ with generalized stabilization distance~$m$.
\end{theoremalph}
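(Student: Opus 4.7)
The plan is to define a non-negative integer invariant $\nu(D_1, D_2)$ for pairs of slice discs of a fixed knot $J$ that (a) is unchanged if either disc is modified by connected sum with a 2-knot and (b) changes by at most a universal constant $c$ under a single 1-handle stabilization. Such an invariant immediately forces the generalized stabilization distance to be at least $\nu(D_1, D_2)/c$; matching this with an explicit upper bound and rescaling the building block yields the precise value $m$.

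The invariant will be built from the double branched cover $\Sigma_2(D)$ of $D^4$ over $D$, whose boundary is $\Sigma_2(J)$. The subspace $L(D) := \ker\bigl(H_1(\Sigma_2(J);\F) \to H_1(\Sigma_2(D);\F)\bigr)$ is a Lagrangian for the $\F$-linking form on $H_1(\Sigma_2(J);\F)$, and I set
\[
\nu(D_1, D_2) := \dim_\F L(D_1) - \dim_\F\bigl(L(D_1) \cap L(D_2)\bigr).
\]
Property (a) holds because the identity $\Sigma_2(D \# K) = \Sigma_2(D) \natural \bigl(\Sigma_2(K) \setminus B^4\bigr)$ makes the 2-knot contribution appear as a direct summand of $H_1(\Sigma_2(D \# K);\F)$ disjoint from the image of the boundary inclusion of $H_1(\Sigma_2(J);\F)$, leaving $L(D)$ unchanged. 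Property (b) will follow from a local Mayer--Vietoris computation: a 1-handle stabilization is supported in a 4-ball, its double branched cover is a cobordism with a priori bounded topology, and the resulting shift in $L(D)$ is correspondingly bounded.

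For the examples, I choose a building-block ribbon knot $J_0$ whose $\Sigma_2(J_0)$ supports two distinct Lagrangian metabolizers $L_1 \neq L_2$ in $H_1(\Sigma_2(J_0);\F)$, each realized by an explicit slice disc $D_{0,i}$, and then set $J_m := \#^m J_0$. Additivity under connected sums yields $H_1(\Sigma_2(J_m);\F) \cong H_1(\Sigma_2(J_0);\F)^{\oplus m}$ with metabolizers $L_1^{\oplus m}$ and $L_2^{\oplus m}$ for the slice discs $D_1^m := \natural^m D_{0,1}$ and $D_2^m := \natural^m D_{0,2}$, so that $\nu(D_1^m, D_2^m) = m \cdot (\dim L_1 - \dim(L_1 \cap L_2))$ grows linearly in $m$. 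A summand-by-summand construction interpolating between the two slicings gives the matching upper bound on the distance.

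The hardest step will be property (b): quantitatively controlling the change in the Lagrangian $L(D)$ under one 1-handle stabilization. While the local double branched cover is elementary, tracking how the kernel of the boundary inclusion shifts inside the fixed linking-form space $H_1(\Sigma_2(J);\F)$ requires combining Mayer--Vietoris with explicit linking-form bookkeeping. A secondary subtlety is matching the lower and upper bounds precisely to yield the exact value $m$ rather than merely a quantity linear in $m$: this demands fine control on $c$ and on the ratio $\dim L_1 - \dim(L_1 \cap L_2)$, and likely a careful choice of coefficient field (for instance $\F = \F_p$ for an appropriate prime $p$) so that a single stabilization corresponds to a unit step in $\nu$.
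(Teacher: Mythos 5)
Your overall strategy---a numeric quantity built from the kernel of the boundary-inclusion map on some homology theory, shown to be blind to local 2-knot addition and to change by at most a bounded amount per 1-handle stabilization, with examples built by connect-summing a good building block---is precisely the strategy of the paper's proof. The only genuine difference is the choice of homology theory: the paper uses $\ker(\A_{\Q}(J) \to \A_{\Q}(D))$ measured by generating rank over the PID $\Q[t^{\pm 1}]$, while you use $\ker(H_1(\Sigma_2(J);\F) \to H_1(\Sigma_2(D^4,D);\F))$ measured by $\F$-dimension. Since $H_1(\Sigma_2(J);\F_p) \cong \A(J)\otimes_{\Z[t^{\pm1}]}\F_p$ with $t$ acting by $-1$, yours is essentially the specialization of the paper's invariant at $t=-1$, and the $9_{46}$-style example works in both settings. (Minor point: the decomposition for a locally knotted sphere is an interior connected sum, $\Sigma_2(D^4,D\#S)\cong\Sigma_2(D^4,D)\#\Sigma_2(S^4,S)$, not a boundary connected sum~$\natural$.)

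The genuine gap is property (b), which you flag but do not close, and which is exactly the engine of the whole argument (it corresponds to Proposition~\ref{prop:kerneloftwistedH1} in the paper). You need two things there: first, that $L(D)\subseteq L(F)$ whenever $F$ is a stabilization of $D$ (the analogue of $P_1\subseteq P_2$), and second, a sharp value of~$c$. Neither is automatic. In the Alexander-module setting, Construction~\ref{cons:standardcobordism} shows that $X_T$ is obtained from $X_{F_1}\times I$ by attaching a \emph{single} 5-dimensional 2-handle, so $H_2(X_T,X_{F_1};\Q[t^{\pm1}])\cong\Q[t^{\pm1}]$ and $c=1$ drops out. For the branched double cover one must be more careful: the attaching circle $\gamma=[\mu_1,\beta]$ is null-homologous mod~$2$, so the (unbranched) double cover $X_T^2$ is obtained by attaching \emph{two} 2-handles, which naively gives $c=2$ and hence only $d_2\geq m/2$---not the claimed $m$. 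The correct count is $c=1$, because the deck involution acts as $-1$ on $H_1(\Sigma_2(D^4,F_1))$ and the two lifted attaching circles are therefore homologous up to sign, so they kill only a rank-one subgroup; equivalently, one checks the local model, $\Sigma_2(B^4,D^2\sqcup D^2)\cong S^1\times D^3$ versus $\Sigma_2(B^4,\text{annulus})\cong D^2\times S^2$, joined by a single 5-dimensional 2-handle. But this is the substantive content of the proof and you leave it as a sketch, including the dependency of your final exact count on the value of $c$ you never pin down. Until $c=1$ and the containment $L(D)\subseteq L(F)$ are actually proven, the proposal does not yield Theorem~\ref{thm:B}.
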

To prove Theorem~\ref{thm:B} we again rely on the Alexander module,  comparing for $i=1$ and $2$ the kernels of the inclusion-induced maps
\[H_1(S^3 \sm \nu J;\Q[t^{\pm 1}]) \to H_1(D^4 \sm \nu D_i;\Q[t^{\pm 1}]).\]
Given any embedded surface $\Sigma$ with boundary~$J$,  we then analyze how the kernel of the inclusion induced map
\[H_1(S^3 \sm \nu J;\Q[t^{\pm 1}]) \to H_1(D^4 \sm \nu \Sigma;\Q[t^{\pm 1}])\]
can change under 1-handle and 2-sphere addition.\\

 One common way to produce a slice disc for a knot is to surger a spanning surface
for the knot along a collection of curves as follows. Given an embedded oriented surface $F$ in $S^3$ with boundary $J$, suppose we can find  a set of 0-framed curves $\gamma_i \subset F$ that form a half-basis for $H_1(F;\Z)$ and which themselves bound disjoint discs $\Delta_i$ in $D^4$. Then the surface
 \[F_{\Delta}:= \Big(F \ssm \bigcup_i (\gamma_i \times (0,1))\Big) \cup \Big( \bigcup_i \Delta_i \times \{0,1\}\Big) \subset D^4,\]
is a slice disc for $J$, after a minor isotopy to smooth corners and make the embedding proper.
  The methods of Theorem~\ref{thm:B} can often distinguish slice discs which arise from surgering a Seifert surface along two different collections of $\{\gamma_i\}$ curves.
  However,  while fixing the $\{\gamma_i\}$ there can still be multiple choices for the slice discs $\Delta_i$, and Alexander module techniques cannot distinguish the resulting slice discs for $J$.

 For our last main result we detect these second order differences between slice discs, and again show that the distance can be arbitrarily large.

\begin{theoremalph}\label{thm:C}
  For every nonnegative integer $m$, there exists a knot $J \subset S^3$ and a pair of slice discs $D_1$ and $D_2$ for $J$ with generalized stabilization distance at least $m$, such that the kernels
  \[\ker\big(H_1(S^3 \sm \nu J;\Q[t^{\pm 1}]) \to H_1(D^4 \sm \nu D_i;\Q[t^{\pm 1}])\big)\]
  coincide for $i=1,2$.
\end{theoremalph}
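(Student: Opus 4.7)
\medskip

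The overall plan is to upgrade the Alexander-module argument from Theorem~\ref{thm:B} to a twisted-homology argument using metabelian representations, and to produce families of examples for which the Alexander kernel is forced to coincide by construction while a suitable twisted invariant differs by an amount that grows linearly in the parameter.

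First I would fix a knot $J$ with a genus-$g$ Seifert surface $F \subset S^3$ and a choice of metabolising curves $\{\gamma_i\} \subset F$ that individually bound disjoint slice discs in $D^4$. For two different choices $\{\Delta_i^{(1)}\}$ and $\{\Delta_i^{(2)}\}$ of such slice discs for the same $\gamma_i$, the construction $F \mapsto F_\Delta$ reviewed in the introduction yields slice discs $D_1$ and $D_2$ for $J$. Because the set of curves $\{\gamma_i\}$ on $F$ is the same in both cases, the kernels of $H_1(S^3\setminus \nu J;\Q[t^{\pm 1}]) \to H_1(D^4\setminus \nu D_i;\Q[t^{\pm 1}])$ automatically coincide (both are the $\Q[t^{\pm 1}]$-submodule carried by the $\gamma_i$), as required by the statement. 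Concretely, I would take $\Delta_i^{(2)}$ to differ from $\Delta_i^{(1)}$ by infection in $D^4\setminus F$ along curves chosen to be trivial in the first Alexander module but nontrivial in a metabelian quotient, for example satellite operations by knots with interesting Casson–Gordon-type invariants.

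Next I would set up the metabelian obstruction. Given a prime $p$ and a character $\chi\colon H_1(S^3\setminus \nu J;\Q[t^{\pm 1}]) \to \Z/p$ that vanishes on the common Alexander kernel, $\chi$ extends to $H_1(D^4\setminus \nu D_i;\Q[t^{\pm 1}])$ for $i=1,2$ and so assembles with the abelianisation into a metabelian representation $\alpha_i\colon \pi_1(D^4\setminus \nu D_i) \to \Z \ltimes \Z/p$. This produces twisted homology modules $H_1(D^4\setminus \nu D_i;\Q(\zeta_p)[t^{\pm 1}])$, and the relevant numerical invariant $\tau_i(\chi)$ is either the $\Q(\zeta_p)[t^{\pm 1}]$-dimension of the image of $H_1(S^3\setminus \nu J)$ in this module, or the order of an appropriate quotient. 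The point is that since $\chi$ extends to both $D_1$ and $D_2$, the quantities $\tau_1(\chi)$ and $\tau_2(\chi)$ are simultaneously defined and comparable.

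The key analytic step is a stabilisation bound: adding a single 1-handle to a surface $\Sigma$ in $D^4$ changes $\tau(\chi)$ by at most a universal constant $C$, because attaching the handle alters the disc complement by adding one generator and one relation to $\pi_1$, which in turn can change the twisted $H_1$-dimension by a bounded amount. To make this useful in the generalised stabilisation setting I would also show that connected sum with a 2-knot leaves the normalised invariant unchanged: the metabelian character $\chi$ is pulled back from $H_1$ of the $J$-side, hence restricts trivially across the connect-sum sphere, and a Mayer–Vietoris argument shows that the extra twisted $H_1$ from the 2-knot summand can be accounted for and subtracted off. Together these give $|\tau_1(\chi)-\tau_2(\chi)| \le C\cdot d$, where $d$ is the generalised stabilisation distance.

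The main obstacle is the fourth and final step: producing, for each $m$, a knot $J_m$ and pair of slice discs with the Alexander kernels coinciding and some character $\chi$ satisfying $|\tau_1(\chi)-\tau_2(\chi)| \ge Cm+1$. My approach would be an iterated satellite/infection construction, stacking $m$ independent modifications of the discs $\Delta_i^{(2)}$ along curves lying in the derived subgroup of the disc complement, and computing the cumulative effect on the twisted homology by a Mayer–Vietoris calculation on the $p$-fold cyclic cover. The delicate points are to verify that the infections contribute additively to the metabelian twisted $H_1$ while leaving the ordinary Alexander kernel unchanged, and to argue that the added contributions cannot be cancelled by subsequent connected sum with any 2-knot. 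This last point is what forces the use of a twisted invariant that is genuinely metabelian rather than abelian.
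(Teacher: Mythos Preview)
Your overall philosophy---use metabelian twisted homology to see past the Alexander kernel, bound the change under a single 1-handle, and build examples by iterated satellite/infection---matches the paper's strategy.  However, there is a genuine gap that the paper has to work quite hard to fill, and your outline skips over it.

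The issue is the \emph{extension of the character over the cobordism}.  You extend $\chi$ separately to $\pi_1(X_{D_1})$ and $\pi_1(X_{D_2})$ and then assert a stabilisation bound of the form $|\tau_1(\chi)-\tau_2(\chi)|\le C\cdot d$.  But that inequality only makes sense if there is a \emph{single} representation on the exterior $X_F$ of the common stabilised surface $F$ (indeed on the whole cobordism $X_T$ from $X_{D_1'}$ through $X_F$ to $X_{D_2'}$) restricting to your $\alpha_i$ on each end.  A character that extends to both disc exteriors need not extend over $X_F$: each 1-handle addition imposes an extra relation, and your $\chi$ must vanish on the resulting elements of $H_1$ of the double cover.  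In the paper this is Claim~\ref{claim:1} in the proof of Theorem~\ref{thm:subtlerlargedistancedetail}, and it is handled by an overcounting trick: one takes $N=4g$ connect-summands so that the $2h\le 2g$ extension constraints still leave at least $2g$ of the component characters $\chi_i$ nonzero, via a linear-algebra count over $\mathbb F_3$.  Without this step your stabilisation bound is comparing two invariants that are not a~priori defined with respect to the same representation on $X_F$, and the triangle-inequality chain $D_1\to F\to D_2$ breaks.

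Two smaller points.  First, for the $2$-knot invariance (your Mayer--Vietoris step) you need the character to restrict \emph{trivially} to the $2$-knot summand; the paper arranges this explicitly (Proposition~\ref{prop:2knotsandtwistedh1}) by choosing the extension $\chi_{D\#S}=\chi_D\oplus 0$, and then must check this choice is compatible with the cobordism extension above.  Second, the paper works with the finite dihedral group $\mathbb Z_2\ltimes\mathbb Z_3$ and $\mathbb Z[\xi_3]$-coefficients (a PID), and the obstruction is the \emph{generating rank} of $\ker(\iota_2)/(\ker(\iota_1)\cap\ker(\iota_2))$ rather than a dimension over a field; the PID hypothesis is used repeatedly via Lemma~\ref{lemma:gen-rank-facts}.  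Your $\mathbb Q(\zeta_p)[t^{\pm1}]$-dimension idea could likely be made to work, but the paper's choice is what makes the explicit computation in Proposition~\ref{prop:satellitekernel} tractable.
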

Our primary tool in the proof of Theorem~\ref{thm:C} is \emph{metabelian twisted homology}, or twisted homology coming from maps to \emph{metabelian} groups, i.e. groups $G$ with
\[ G^{(2)}:= [[G,G],[G,G]]=0.\]
These sorts of representations were notably used by Casson-Gordon \cite{Casson-Gordon:1978-1, Casson-Gordon:1986-1} to give the first examples of algebraically slice knots in $S^3$ which are not actually slice.
The corresponding twisted homology theories have the nice feature of being relatively computable while still being powerful enough to obtain strong conclusions, for example distinguishing mutant knots up to concordance \cite{Kirk-Livingston:1999-1}.
In our case, we take $G$ to be the dihedral group $D_{2n} \cong \Z_2 \ltimes \Z_n$ and construct our representations using maps from the first homology of the double cover of the relevant space to $\Z_n$.
\\

We remark that Theorem~\ref{thm:B} is not a corollary of Theorem~\ref{thm:C}, since the former gives us distance exactly $m$.
Theorem~\ref{thm:B} is also easier to prove, and the method extends straightforwardly to distinguish choices of slice discs for many knots beyond the explicit examples we give, while Theorem~\ref{thm:C} requires more involved arguments and more specialized constructions.
\\

A slightly different analysis of stabilization distance between surfaces was undertaken by \cite{Juhasz-Zemke-stab-dist}, who rather than minimizing the number of 1-handle stabilizations necessary to make two surfaces isotopic instead minimized the largest genus of any surface appearing in a sequence of stabilizations and de-stabilizations connecting the two surfaces.
\\

We also wish to advertise the following problem, which relates to recent work by \cite{Juhasz-Zemke-slice-disks} and~~\cite{Conway-Powell}.
 For a slice knot $R$, let $n_s(R)$ denote the number of equivalence classes of slice discs for $R$, where the equivalence relation is generated by connected sum with knotted 2-spheres and ambient isotopy rel.\ boundary.
Note that~$n_s(U)=1$.

Our examples of Theorem~\ref{thm:B} show that for every integer $k$ there is a knot $R_k$ with $n_s(R_k) \geq k$.
In fact, the knot $\#^k 9_{46}$ has $2^k$ natural slice discs obtained by choosing `left band' or `right band' slice discs for each $i=1, \dots, k$; see Figure~\ref{fig:946}. By considering the kernels of the inclusion induced maps on Alexander modules as we do in the proof of Theorem~\ref{thm:B}, one can see they are all mutually not ambiently isotopic rel.\ boundary and so $n_s(\#^k 9_{46}) \geq 2^k$.

\begin{prob}
Determine the value of $n_s(R)$ for some nontrivial knot $R$, or at least whether $n_s(R)< \infty$.
\end{prob}

\subsection*{Organization of the paper}

In Section~\ref{subsection:stab-distances} we give precise definitions for our notions of stabilization distance. Section~\ref{section:cobordisms} constructs a cobordism between surface exteriors corresponding to a stabilization. Our results will follow from analyzing the effects on homology of these cobordisms. Section~\ref{section:gen-rank} recalls the notion of generating rank of a module over a commutative PID, records the facts about generating rank that we shall use, and establishes our conventions around twisted homology.  Then Section~\ref{section:2-knots} proves Theorem~\ref{thm:A}, Section~\ref{section:slice discs-1} proves Theorem~\ref{thm:B}, and Section~\ref{section:slice-discs-2} proves Theorem~\ref{thm:C}.

\subsection*{Conventions}
All manifolds, unless otherwise stated, are compact, smooth, and oriented.
When $N$ is a properly embedded submanifold of $M$, we write $X_N:= M \ssm \nu(N)$.
In our context, we will frequently have a canonical isomorphism $\varepsilon \colon H_1(X_N) \to \Z$ and  in this case we let $X_N^n$ denote the corresponding $n$-fold cyclic cover, for $n \in \mathbb{N} \cup \{ \infty\}$.
For $n \in \mathbb{N}$, we use $\Z_n$ to denote the finite cyclic group $\Z/n\Z$.
Given a surface~$F$, we let~$g(F)$ denote its genus.

\subsection*{Acknowledgements}
The second author thanks Federico Cantero Mor\'{a}n and Jason Joseph for discussions on Theorem~\ref{thm:A}.
Both authors thank the referee for a careful reading and many valuable comments.
During the preparation of this paper, the first author was partially supported by NSF grant DMS-1902880.

\section{Stabilization distances}\label{subsection:stab-distances}

Fix a compact, oriented, smooth 4-manifold $W$.
The following definition is motivated by that of Juh{\'a}sz and Zemke~\cite{Juhasz-Zemke-stab-dist}.

\begin{defn}\label{defn:1handlestab}
Let $\Sigma$ be an oriented surface with boundary, smoothly and properly embedded in $W$.
Let $B$ be an embedding of $D^4$ into $W$ such that $\partial B$ intersects $\Sigma$ transversely in a 2-component unlink $L$ and $B$ intersects $\Sigma$  in two discs $\Delta_0$ and $\Delta_1$, which can be simultaneously isotoped within $B$ to lie in $\partial B$. Suppose that a 3-dimensional 1-handle $D^2 \times I$ is embedded into the interior of $W$ such that $D^2 \times \{i\}= \Delta_i$ for $i=0,1$.
Then $\Sigma':= (\Sigma \cap (W \ssm B))  \cup_L (S^1 \times I)$ is a \emph{1-handle stabilization} of $\Sigma$.
If  $S^1 \times I$ can be isotoped into $\partial B$ relative to $L$, we call the stabilization \emph{trivial}.
\end{defn}
\begin{figure}[h]
\includegraphics[height=1.8cm]{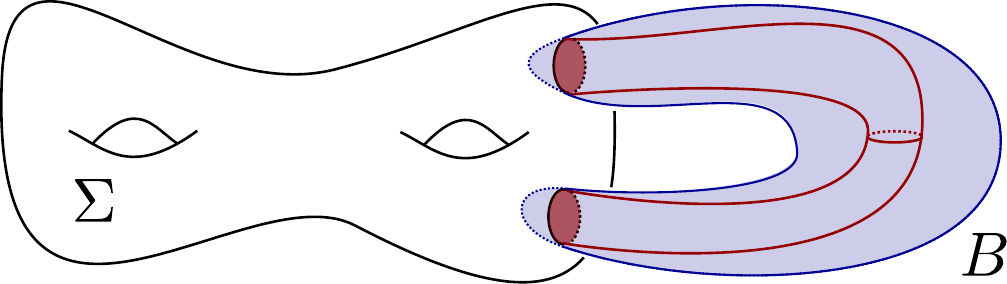}
\caption{A surface $\Sigma$ with ball $B$ as in Definition~\ref{defn:1handlestab}, pre-stabilization.}\label{fig:stabelabpic}
\end{figure}


A trivial 1-handle stabilization does not change the fundamental group of the complement of the surface, so frequently there will be no sequence of trivial stabilizations relating two given surfaces.  On the other hand, any two homologous surfaces become isotopic after adding finitely many 1-handles~\cite{Bay-Sun-15}.

\begin{defn}
Define the \emph{1-handle stabilization distance} in $\N \cup \{0,\infty\}$ between smoothly and properly embedded surfaces $(F,\partial F) \subset (W,\partial W)$ and $(F',\partial F') \subset (W,\partial W)$ with $\partial F=\partial F'$, homologous in $H_2(W,\partial W;\Z)$, to be the minimal $k  \in \mathbb{N}$ such that $F$ and $F'$ become ambiently isotopic rel.\ boundary after each has been stabilized  at most $k$ times. We denote this by $d_{1}(F, F')$.  If $F$ and $F'$ are not homologous or have different boundaries then we say that $d_{1}(F, F') = \infty$.
\end{defn}

In particular for any two 2-knots $K$ and $J$, $d_{1}(K,J) < \infty$.
For distances between slice discs, we obtain stronger results by defining a coarser notion that permits connected sum with locally knotted 2-spheres.
%
By \emph{adding a locally knotted 2-sphere} to a properly embedded surface $(\Sigma,\partial \Sigma) \subset (W,\partial)$ we mean taking a 2-knot $S$ in $S^4$ and forming the connected sum of pairs
\[(W, \Sigma) \#(S^4, S) = (W, \Sigma \#S).\]

\begin{defn}
Let $(F,\partial F) \subset (W,\partial W)$ and $(F',\partial F') \subset (W,\partial W)$ be smoothly and properly embedded surfaces.
If $\partial F=\partial F'$ and  $[F]=[F']\in H_2(W, \partial W; \Z)$, we define the \emph{generalized stabilization distance} $d_2(F, F')$ in $\N \cup \{0,\infty\}$
  to be the minimal $k \in \mathbb{N}$ such that $F$ and $F'$ become ambiently isotopic rel.\ boundary after each has been stabilized at most $k$
  times and had arbitrarily many locally knotted 2-spheres added. If $F$ and $F'$ are not homologous or have different boundaries then we say that $d_2(F, F') = \infty$.
\end{defn}

Note that for any two slice discs $D_1,D_2$ in $D^4$ for a fixed knot in $S^3$, we have that $d_2(D_1,D_2) < \infty$.
It is immediate from the definitions that
\[ d_2(F,F') \leq d_{1}(F, F').\]
We also remark that $d_{JZ}(F,F') \leq d_2(F,F')$, where $d_{JZ}$ denotes the Juh{\'a}sz-Zemke stabilization distance~\cite{Juhasz-Zemke-stab-dist} between surfaces.


\section{Cobordisms corresponding to handle additions}\label{section:cobordisms}

Now we construct cobordisms corresponding to handle additions.  The following construction will be used in our proofs of all three main theorems.

\begin{cons}\label{cons:standardcobordism}[A cobordism between surface exteriors.]
Let $W$ be a compact, oriented, smooth 4-manifold.
Suppose that $F_1$ is a smoothly and properly embedded surface in $W$ with $\partial F_1=K \subset \partial W$ and that $F_2$ has been obtained from $F_1$ by a 1-handle addition such that $g(F_2)= g(F_1)+1$.
We define an ambient cobordism $T \subset W \times I$ as follows:
\begin{align*}
T:= (F_1\times [0,1/2]) \cup ((D^1 \times D^2) \times \{1/2\}) \cup (F_2 \times [1/2, 1]),
\end{align*}
where $D^1 \times D^2 \hookrightarrow W$ is an embedding with $\partial D^1 \times D^2 \subset F_1$ and $D^1 \times \partial D^2 \subset F_2$.  (That is,~$D^1 \times D^2$ is the 3-dimensional 1-handle~$h$ in the definition of 1-handle stabilization.)
Observe that
\[ \partial T= (F_1 \times \{0\}) \cup_{K \times \{0\}} (K \times [0,1]) \cup_{K \times \{1\}} F_2 \times \{1\}
\]
and so $X_T\colon=(W \times I)\ssm \nu(T)$ is a cobordism rel.\ $X_K$ from $X_{F_1}$ to $X_{F_2}$.

Since $T$ is obtained from $F_1 \times [0,1/2]$ by attaching a single 3-dimensional 1-handle to $F_1 \times \{1/2\}$ (and then flowing upwards), it follows from the rising water principle~\cite[Section~6.2]{Gompf-stipsicz-book} that $X_T$ has a handle decomposition relative to  $X_{F_1}$ obtained by attaching a single 5-dimensional 2-handle to $X_{F_1} \times I$. Notice that the attaching sphere of this 2-handle determines an element of $\pi_1(X_{F_1})$ of the form $\gamma=\mu_1 \beta \mu_2^{-1} \beta^{-1}$, where $\mu_1$ and $\mu_2$ are meridians to $F_1$ near the attaching spheres of $h$ and $\beta$ is a parallel push-off of the core of $h$. In particular, $\gamma$ is null-homologous in $H_1(X_{F_1})$.
Taking the dual decomposition, we see that $X_T$ also has a handle decomposition relative to $X_{F_2}$ obtained by attaching a single 5-dimensional 3-handle.
By excision, we therefore have that
\[ H_k(X_T, X_{F_1})=\begin{cases} \Z & k=2\\ 0 & \text{else} \end{cases} \text{ and  } H_k(X_T, X_{F_2})=\begin{cases} \Z & k=3 \\ 0 & \text{else.} \end{cases}  \]
In particular, the inclusion maps $X_{F_i} \to X_T$ induce isomorphisms on first homology.
It will be useful for us later on to know that the inclusion induced map $\pi_1(X_{F_1}) \to \pi_1(X_{T})$ is surjective, as follows immediately from applying the Seifert-van Kampen theorem to $X_T= (X_{F_1} \times I) \cup (2\text{-handle})$.

We now comment on basepoints for the fundamental group in this context.  Let $x_0 \in X_K \subseteq X_T \times \{0\}$, let $\alpha=\{x_0\} \times I \subseteq X_T \times I$, and let $x_1= \{x_0\} \times 1$.  We will always let $\pi_1(X_K)= \pi_1(X_K, x_0)$, $\pi_1(X_{F_1})= \pi_1(X_{F_1}, x_0)$, $\pi_1(X_T)= \pi_1(X_T, x_0)$, and $\pi_1(X_{F_2})= \pi_1(X_{F_2}, x_1)$.
There are natural inclusion induced maps $\iota \colon \pi_1(X_K, x_0) \to \pi_1(X_T, x_0)$ and $\iota_1\colon  \pi_1(X_{F_1}, x_0) \to \pi_1(X_T, x_0)$. Moreover, we use the arc $\alpha$ to define
\[\iota_2 \colon \pi_1(X_{F_2}, x_1) \to \pi_1(X_T, x_1) \to \pi_1(X_T ,x_0).\]
Later on, we will often omit basepoints from our notation, always using the above arcs and corresponding inclusion maps. This completes Construction~\ref{cons:standardcobordism}.
\end{cons}

\begin{prop}
  Fix a compact, oriented, smooth 4-manifold $W$, a $($possibly empty$)$ link~$L$ in $\partial W$, a nonnegative number $g$, and  a homology class $A \in H_2(W,\partial W;\Z)$ with $\partial A=[L]$.  The distance function~$d_{1}$ defines a metric on the set of ambient isotopy classes rel.\ boundary of embedded oriented surfaces of genus $g$ in $W$ with boundary $L$ that represent the class $A \in H_2(W,\partial W;\Z)$.
  \end{prop}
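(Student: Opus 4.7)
The plan is to verify the three metric axioms in turn. Non-negativity and symmetry are immediate from the definition, as is the fact that $d_1(F,F)=0$. Finiteness follows from the Baykur–Sunukjian theorem once one observes that our hypotheses (same boundary $L$, same homology class $A$, same genus $g$) allow us to apply it. What remains is to check that $d_1(F,F')=0$ implies that $F$ and $F'$ are already ambient isotopic rel.\ boundary (which is immediate from the definition, since $0$ stabilizations means no modification) and the triangle inequality
\[ d_1(F, F'') \leq d_1(F, F') + d_1(F', F''). \]

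For the triangle inequality, suppose that $d_1(F,F') = a$ and $d_1(F',F'') = b$, witnessed by a pair of sequences of stabilizations
\[ F \rightsquigarrow F^{(a)} \sim_{\text{iso}} F'^{(a)} \lleftarrow F', \qquad F' \rightsquigarrow F'^{(b)} \sim_{\text{iso}} F''^{(b)} \lleftarrow F'',\]
where each arrow consists of $a$ (respectively $b$) 1-handle stabilizations and $\sim_{\text{iso}}$ denotes ambient isotopy rel.\ boundary. The goal is to produce from these data a pair of sequences, each of length $a+b$, exhibiting a common stabilization of $F$ and $F''$. The idea is to perform the $b$-many stabilizations producing $F'^{(b)}$ inside $4$-balls in $W$ disjoint from the $a$-many stabilizations producing $F'^{(a)}$ (as well as disjoint from $\partial W$). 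Since $W$ is $4$-dimensional and the stabilization $4$-balls are compact, a small isotopy of the second family of balls off the first yields a surface $F'^{(a+b)}$ obtained from $F'$ by $a+b$ disjoint 1-handle stabilizations and which is simultaneously a stabilization of $F'^{(a)}$ (by the last $b$ handles) and of $F'^{(b)}$ (by the first $a$ handles, isotoped into disjoint position).

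Now the key observation is that if two properly embedded surfaces $\Sigma_0$ and $\Sigma_1$ are ambient isotopic rel.\ boundary via an ambient isotopy $\Phi_t \colon W \to W$ with $\Phi_0 = \mathrm{id}$ and $\Phi_1(\Sigma_0) = \Sigma_1$, then any 1-handle stabilization of $\Sigma_0$ using data $(B, h)$ produces a surface that is ambient isotopic rel.\ boundary to the 1-handle stabilization of $\Sigma_1$ using the transported data $(\Phi_1(B), \Phi_1(h))$. Applying this principle to the isotopy $F^{(a)} \sim_{\text{iso}} F'^{(a)}$ and transporting the $b$ additional stabilizations of $F'^{(a)}$ that produce $F'^{(a+b)}$, we obtain a stabilization $F^{(a+b)}$ of $F^{(a)}$ (and hence of $F$, done $a+b$ times) with $F^{(a+b)} \sim_{\text{iso}} F'^{(a+b)}$. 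Running the analogous argument on the other side using $F'^{(b)} \sim_{\text{iso}} F''^{(b)}$ produces $F''^{(a+b)} \sim_{\text{iso}} F'^{(a+b)}$. Chaining the isotopies gives $F^{(a+b)} \sim_{\text{iso}} F''^{(a+b)}$, so $d_1(F,F'') \leq a+b$, as required.

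The main obstacle is the disjointness argument: one must be careful that the $4$-balls in Definition~\ref{defn:1handlestab} and the attached $3$-dimensional $1$-handles for the two separate families of stabilizations can be made simultaneously disjointly embedded, and that the transporting ambient isotopies do not alter the boundary. Disjointness follows from a transversality/general-position argument combined with the freedom to shrink the $4$-balls, while the rel.\ boundary condition is preserved because the ambient isotopies we apply may be chosen to be the identity in a neighborhood of $\partial W$ (since the stabilization balls lie in the interior of $W$). Once these technical points are handled, the three axioms are all verified and $d_1$ is a metric.
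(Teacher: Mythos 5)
Your proof is correct, but it follows a genuinely different route from the paper's.  The paper views the concatenated sequence $F \rightsquigarrow S \rightsquigarrow F' \rightsquigarrow S' \rightsquigarrow F''$ of stabilizations and destabilizations as a $3$-dimensional cobordism $T$ embedded in $W \times I$, perturbs so that projection to $I$ is Morse on $T$ with stabilizations as index-one and destabilizations as index-two critical points, and then invokes the codimension-$\geq 2$ rearrangement theorem (via general position of ascending and descending manifolds of a gradient-like vector field) to push all index-one critical points below all index-two ones; the regular level set $S''$ in the middle is then a common $(k+h)$-fold stabilization of $F$ and $F''$.  Your approach instead stays in $W$: you use general position to make the two families of stabilization data for $F'$ simultaneously disjoint, form the common $(a+b)$-fold stabilization $F'^{(a+b)}$ directly, and then transport the extra handles across the given ambient isotopies.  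Both arguments are sound.  The Morse-cobordism version is cleaner and hides the bookkeeping of the stabilization data in the codimension count, whereas your version is more elementary and makes the ``transport stabilization data through an ambient isotopy'' principle explicit, which is a useful observation in its own right.

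One place where your write-up is imprecise and would need to be tightened in a final draft: you speak of making the stabilization $4$-balls disjoint by ``a small isotopy'' and by ``transversality/general-position,'' but the $4$-balls are codimension zero in $W$, so neither of these literally applies.  The correct justification is the one you gesture at with ``the freedom to shrink the $4$-balls'': each stabilization is determined up to ambient isotopy by a core consisting of an arc in $W$ meeting the surface only at its endpoints, together with two small feet discs on the surface; the ball $B$ and the $3$-dimensional $1$-handle may then be taken to be a thin regular neighbourhood of this core.  Cores are $1$-complexes (plus $2$-discs on the surface), so general position in the $4$-manifold $W$ (and on the surface) lets you separate the two families of cores, after which thin enough regular neighbourhoods are disjoint.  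Also note that you do not actually need the $a$ stabilizations to be mutually disjoint among themselves (nor the $b$ among themselves); you only need the union of the $a$-data to be disjoint from the union of the $b$-data, which is what the shrink-and-general-position argument delivers.  With that clarification the proof is complete.
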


\begin{proof}
  We use that the distance is finite within the sets considered~\cite{Bay-Sun-15}.   If $d_{1}(\Sigma,\Sigma')=0$, then~$\Sigma$ and~$\Sigma'$ are ambiently isotopic.  The distance function is flagrantly symmetric.

  To see the triangle inequality,
 suppose $F$ and $F'$ are homologous rel.\ boundary surfaces which stabilize via $k$ 1-handle additions to a surface $S$ and $F'$ and $F''$ are homologous rel.\ boundary surfaces which  stabilize via $h$ 1-handle additions to $S'$.
Now consider the sequence of stabilizations and destabilizations  from $F$ to $S$ to $F'$ to $S'$ to $F''$ as a 3-dimensional cobordism $T$
 embedded in $W \times I$.  We may perturb the embedding of $T$ so that  $F \colon W \times I \to I$ restricts to a Morse function on $T$, where stabilizations correspond to index one critical points, and destabilizations correspond to index two critical points. First we argue that we can rearrange this sequence of stabilizations and destabilizations so that all the stabilizations come first, followed by destabilizations. Our desired result will then follow immediately from letting $S''$ be the preimage of a regular value taken after all index one critical points and before all index two critical points, and observing that both $F$ and $F''$ stabilize via $(k+h)$ 1-handle additions to $S''$.

In codimension at least two, critical points of an embedded cobordism can be arranged, by ambient isotopy, to appear in order of increasing index~\cite{Pe}, \cite[Theorem~4.1]{BP}, by the following standard argument, which we include for completeness.
  Choose a gradient-like embedded vector field subordinate to $F$~\cite[Definition~3.1]{BP}.  Rearrangement of critical points is possible in general if the ascending manifold of the lower critical point is disjoint from the descending manifold of the higher critical point. Suppose that an index one critical point of $T$ has critical value $t_1$  higher than critical value $t_2$ of an index two critical point, and suppose that there are no critical values between $t_2$ and $t_1$. The descending manifold of the index 1 critical point of a 3-dimensional cobordism intersects a generic level set $W \times \{t\}$, with $t_2 < t < t_1$ in a 1-dimensional disc. The descending manifold of the index 2 critical point intersects  $W \times \{t\}$ also in a 1-dimensional disc. By general position, we can perturb the gradient-like vector field to make the ascending and descending manifolds disjoint, and we may do so simultaneously for all such~$t$.  It follows that the critical points can be rearranged by an ambient isotopy, as desired.
\end{proof}

We remark that we do not claim $d_2$ gives rise to a metric.
The next proposition tells us that $2$-spheres can be reordered so they come before 1-handle additions.

\begin{prop}\label{prop:2spheresfirst}
Suppose that an embedded surface $\Sigma_2$ is obtained from a connected surface~$\Sigma_1$ by some number $m$ of 1-handle additions, followed by connect summing with a local 2-knot.  Then there is an embedded surface~$\Sigma'$ that is obtained from~$\Sigma_1$ by adding a local 2-knot, and such that~$\Sigma_2$ is obtained from~$\Sigma'$ by~$m$ 1-handle additions.
\end{prop}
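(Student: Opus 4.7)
The plan is to use the connectedness of $\Sigma_1$ to relocate the 2-knot connect sum to a ball disjoint from the 1-handle regions, at which point the two operations commute and can be performed in the opposite order.

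First I would set up notation: write $\Sigma_1^*$ for the surface obtained from $\Sigma_1$ by the $m$ 1-handle additions. As in Definition~\ref{defn:1handlestab}, each such addition takes place inside a small ball $B_i \subset W$, and $\Sigma_1^*$ agrees with $\Sigma_1$ outside $\bigcup_{i=1}^{m} B_i$. The local 2-knot sum then takes place in a further small ball $B \subset W$ meeting $\Sigma_1^*$ in a disc $D$, producing $\Sigma_2$. Since $\Sigma_1$ is connected and each 1-handle addition preserves connectedness (it removes two discs from a connected surface and glues in an annulus joining the resulting boundary circles), $\Sigma_1^*$ is also connected.

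Second I would pick a small ball $B'' \subset W$ disjoint from every $B_i$ such that $B'' \cap \Sigma_1 = B'' \cap \Sigma_1^*$ is a small disc $D''$; this is possible because $\Sigma_1 \ssm \bigcup_i B_i$ is a nonempty open subset of $\Sigma_1$. Using connectedness of $\Sigma_1^*$, I would choose an embedded arc on $\Sigma_1^*$ from $D$ to $D''$. A tubular neighborhood of this arc in $W$ supports an ambient isotopy of $W$ carrying the ball $B$ (together with its 2-knotted interior) to a ball containing $B''$, and hence carrying the 2-knot sum at $D$ to a 2-knot sum at $D''$. In particular $\Sigma_2$ is ambient isotopic to the surface obtained from $\Sigma_1^*$ by performing the local 2-knot sum at $D''$ instead.

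Finally, because $B''$ is disjoint from every $B_i$, the 1-handle additions and the 2-knot sum at $B''$ commute: they modify the surface in pairwise disjoint regions of $W$. So I would let $\Sigma'$ be the result of performing the 2-knot sum on $\Sigma_1$ at $D''$, which is by construction obtained from $\Sigma_1$ by adding a local 2-knot. Performing the $m$ original 1-handle additions in $B_1,\ldots,B_m$ to $\Sigma'$ then produces a surface ambient isotopic to $\Sigma_2$, as desired. The main obstacle is the isotopy step: making precise the statement that a local 2-knot sum performed at two different small discs on a connected embedded surface yields ambient isotopic surfaces. This is a standard application of the fact that any two small balls meeting a connected surface in a disc can be joined by an isotopy supported near an arc in the surface.
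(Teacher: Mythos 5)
Your proof is essentially the same as the paper's: you relocate the local 2-knot connect sum along an arc in the connected surface to a region disjoint from the 1-handle balls, then observe that the two operations commute when supported in disjoint regions. The paper states this more tersely but the underlying argument is identical.
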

\begin{proof}
Let $\Sigma_1'$ denote $\Sigma_1$ with the 1-handles attached, so $\Sigma_2$ is obtained from $\Sigma_1'$ by connected sum with a local 2-knot $S$.
The isotopy class of $\Sigma_1' \# S$ is unchanged by where on $\Sigma_1'$ we take the connected sum, so we can assume that our connected sum  takes place far away from the attached 1-handles. But then it is clear that we can attach $S$ first and our 1-handles second.
\end{proof}

\section{Generating ranks and twisted homology}\label{section:gen-rank}

\subsection{Generating rank of modules over a commutative PID}
We recall some facts about generating ranks of finitely generated modules over
commutative PIDs.

Let $A$ be a finitely generated module over a commutative PID~$S$.
We say that $A$ has \emph{generating rank  $k$ over $S$} if $A$ is
    generated as an $S$-module by $k$ elements but not by $k-1$ elements and  write
     $\gr_S A=k$. When $S$ is clear from context, we often abbreviate $\gr_S A$ by $\gr A$.

\begin{lem}\leavevmode\label{lemma:gen-rank-facts}
Let $A$, $B$, and $C$ be finitely generated modules over a commutative PID~$S$.
\begin{enumerate}
  \item\label{item:gen-rank-surjection} If $A$ surjects onto $B$ then $\gr_S B \leq \gr_S A$.
  \item\label{item:gen-rank-subgroup} If $B \leq A$ then $\gr_S B \leq \gr_S A$.
\item\label{item:gen-rank-ses} Let $0 \to A \xrightarrow{f} B \xrightarrow{g} C \to 0$ be a short exact sequence of $S$-modules. Then \\ $\gr_s C \geq \gr _S(B) - \gr_S(A)$.
\end{enumerate}
\end{lem}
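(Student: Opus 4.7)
The plan is to handle the three parts in order, with part (2) being the one that genuinely uses the PID hypothesis and the other two being essentially formal.

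For part (1), I would simply take a generating set $\{a_1, \dots, a_n\}$ of $A$ of size $n = \gr_S A$ and apply a surjection $q \colon A \twoheadrightarrow B$. Since $q$ is surjective, $\{q(a_1), \dots, q(a_n)\}$ generates $B$, giving $\gr_S B \leq n = \gr_S A$.

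For part (2), the essential input is the standard PID fact that every submodule of the free module $S^n$ is itself free of rank at most $n$ (provable via Smith normal form, or via the structure theorem for finitely generated modules over a PID). Assuming this, I would fix a surjection $\pi \colon S^n \twoheadrightarrow A$ with $n = \gr_S A$ and consider the preimage $B' := \pi^{-1}(B) \leq S^n$. The cited fact shows $B'$ is free of rank at most $n$, hence $\gr_S B' \leq n$, and $\pi$ restricts to a surjection $B' \twoheadrightarrow B$. Applying part (1) then gives $\gr_S B \leq \gr_S B' \leq n = \gr_S A$.

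For part (3), given minimal generating sets $\{a_1, \dots, a_r\}$ of $A$ and $\{c_1, \dots, c_k\}$ of $C$, I would lift each $c_j$ to some $\tilde c_j \in B$ and claim that $\{f(a_1), \dots, f(a_r), \tilde c_1, \dots, \tilde c_k\}$ generates $B$. A short diagram chase using exactness at $B$ verifies this: for $b \in B$, write $g(b) = \sum_j s_j c_j$, observe that $b - \sum_j s_j \tilde c_j \in \ker g = \Imm f$, and then express the resulting element of $A$ in terms of the $a_i$ and push forward by $f$. This shows $\gr_S B \leq r + k = \gr_S A + \gr_S C$, which rearranges to the desired bound $\gr_S C \geq \gr_S B - \gr_S A$.

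The only substantive ingredient in the whole argument is the PID structure fact invoked in part (2); parts (1) and (3) go through verbatim over any commutative ring. I therefore do not anticipate significant obstacles, and the main point of writing out the proof carefully is just to fix notation for the submodule-to-free-module comparison that will be used later in the paper.
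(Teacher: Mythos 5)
Your proposal is correct and essentially matches the paper's proof: parts (1) and (3) are verbatim the paper's arguments, and for part (2) the paper merely cites the classification theorem for finitely generated modules over a PID while you supply the standard details via the free-submodule-of-$S^n$ fact and the preimage trick. These are the same ingredient viewed slightly differently, so there is no substantive divergence.
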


\begin{proof}
The first part follows immediately from the definition of generating rank.
The second part is easy to check using the classification of finitely generated modules over a commutative PID.
The third property follows from taking minimal $S$-generating sets $\{a_1, \dots, a_n\}$ and $\{c_1, \dots, c_m\}$    for $A$ and $C$ respectively,
picking $b_i \in g^{-1}(c_i)$ for each $1 \leq i \leq m$, and observing that $\{f(a_1), \dots, f(a_n), b_1, \dots, b_m\}$ is an $S$-generating set for $B$.
\end{proof}

\begin{remark}
  Only \eqref{item:gen-rank-subgroup} uses that $S$ is a PID.
\end{remark}

We will also make arguments involving the \emph{order} of  a finitely generated module~$A$ over a  commutative PID~$S$.
The classification of finitely generated modules over a PID states that
there exist $j, k \in \mathbb{N}$ and  elements $s_1, \dots, s_k \in S$ such
that there is a (non-canonical) isomorphism \[A \cong S^j \oplus TA \cong
    S^j \oplus \bigoplus_{i=1}^k S/ \langle s_i \rangle.\] When $j>0$ we say
    that the \emph{order} of $A$ is $|A|=0$ and when $j=0$ we say that the order
    of $A$ is $|A|= \prod_{i=1}^k s_i$. This is well-defined up to
    multiplication by units in $S$. The key property of order we use is that if
    $f\colon A \to B$ is a map of $S$-modules with $\ker(f)$ torsion, then
    $|\Imm(f)|= |A|/|\ker(f)|$.

\subsection{Twisted homology}

Let $X$ be a CW complex with universal cover $\wt{X}$.  The cellular chain complex $C_*(\wt{X})$ is a chain complex of right  $\Z[\pi_1(X)]$-modules. If $X$ is a finite complex then $C_*(\wt{X})$ is finitely generated as a $\Z[\pi_1(X)]$-module.  Let $R$ be a commutative ring with involution and with unit.  Let $\alpha \colon \pi_1(X) \to U_m(R)$ be a unitary representation i.e.\ $\alpha(g^{-1}) = \ol{\alpha(g)}^T$. This extends to a homomorphism of rings with involution $\Z[\pi_1(X)] \to \GL_m(R)$, and makes $R^m$ into a $(\Z[\pi_1(M)],R)$-bimodule.

\begin{defn}
  The $k$th \emph{twisted homology} of $X$ with respect to $\alpha$ is \[H_k^{\alpha}(X;R) := H_k(C_*(\wt{X}) \otimes_{\Z[\pi_1(X)]} R^m).\]
\end{defn}

When the ring $R$ is clearly understood, and we are short of space, we shall sometimes omit $R$ from the notation and write $H_k^{\alpha}(X)$ for $H_k^{\alpha}(X;R)$.

If $X$ is a finite complex and $R$ is Noetherian then $H_k^{\alpha}(X;R)$ is finitely generated as an $R$-module.
If $Y \subset X$ is a subcomplex and we choose a path $\gamma \colon I \to X$ from the basepoint then $\alpha$ determines a representation $\pi_1(Y) \to U_m(R)$ and we write $H_k^{\alpha}(Y;R)$ for the resulting twisted homology. The inclusion induced map $H_k^{\alpha}(Y;R) \to H_k^{\alpha}(X;R)$ depends on the choice of $\gamma$, but nonetheless we omit $\gamma$ from the notation.

\begin{remark}
Given $X$ and $\alpha \colon \pi_1(X) \to U_m(R)$ as above, let $X^{\alpha} \to X$ be the cover corresponding to $\ker(\alpha)$. Then $\Z[\pi_1(X)]$ acts on $C_*(X^\alpha)$ and it follows immediately from our definitions that
\[H_k^{\alpha}(X;R) \cong H_k(C_*(X^{\alpha}) \otimes_{\Z[\pi_1(X)]} R^m).\]
It is sometimes more convenient to compute with this smaller covering space.
\end{remark}

\subsection{Rational Alexander modules}

For any knot or slice disc $L$, let $\A(L)$  denote the Alexander module of $L$ with integral coefficients and let $\A_{\Q}(L)$ denote the Alexander module of $L$ with rational coefficients. That is, let $X_L$ be the exterior of $L$ and as usual let $\varepsilon \colon \pi_1(X_L) \to \Z$ denote the abelianization map.
Then $\A(L):= H_1(X_L, \Z[t^{\pm1}])$ and $\A_{\Q}(L):= H_1(X_L; \Q[t^{\pm}])$, where for $R=\Z,\Q$ the ring $R[t^{\pm1}]$ has a $\Z[\pi_1(X_L)]$-structure determined by $\varepsilon$.  We remark that $\Q$ is flat as a $\Z$-module, and so $\A_{\Q}(L) \cong \A(L) \otimes_{\Z} \Q$.

\section{Pairs of 2-knots with arbitrary 1-handle distance}\label{section:2-knots}

In this section, we prove  that for every nonnegative integer $m$, there exists a pair of 2-knots $K$ and $J$ in the $4$-sphere with 1-handle stabilization distance $m$, which is  an immediate consequence of the  following proposition.

\begin{prop}\label{prop:largedistancetotrivial}
For each $m \in \mathbb{N}$, there exists a knotted 2-sphere $K$ in $S^4$ such that the minimal number of 1-handle stabilizations needed to make $K$ an unknotted surface is
exactly~$m$.
\end{prop}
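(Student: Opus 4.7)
The plan is to use the generating rank $\gr_{\Q[t^{\pm 1}]} \A_\Q(K)$ of the rational Alexander module as a lower bound on the number of 1-handle stabilizations required to unknot $K$, and to exhibit for each $m$ a 2-knot $K_m$ with $\gr_{\Q[t^{\pm 1}]} \A_\Q(K_m) = m$ that can in fact be unknotted in $m$ stabilizations.

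For the lower bound, I would apply Construction~\ref{cons:standardcobordism}. If $K'$ is the result of a single 1-handle stabilization of $K$, the construction produces a 5-dimensional cobordism $X_T$ sitting in $X_K \hookrightarrow X_T \hookleftarrow X_{K'}$, with both inclusions inducing isomorphisms on $H_1$. Hence the abelianization map to $\Z$ extends consistently over $X_T$, and the infinite cyclic covers fit as $X_K^\infty \hookrightarrow X_T^\infty \hookleftarrow X_{K'}^\infty$. Lifting the handle structure, $X_T^\infty$ is obtained from $X_K^\infty \times I$ by equivariantly attaching $\Z[t^{\pm 1}]$-many 2-handles along lifts of the null-homologous commutator $\gamma = \mu_1 \beta \mu_2^{-1} \beta^{-1}$, and from $X_{K'}^\infty \times I$ by 3-handles with simply connected attaching spheres. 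The associated long exact sequences give
\[ \A_\Q(K') \cong \A_\Q(K)/\langle [\gamma] \rangle, \]
so by Lemma~\ref{lemma:gen-rank-facts}\eqref{item:gen-rank-ses} the generating rank drops by at most one per stabilization. Iterating, if $k$ stabilizations transform $K$ into an unknotted surface, then because trivial stabilization does not change $\pi_1$ the resulting genus-$k$ surface has complement with $\pi_1 \cong \Z$, hence vanishing rational Alexander module, forcing $k \geq \gr_{\Q[t^{\pm 1}]} \A_\Q(K)$.

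For the upper bound and explicit construction, I would take $K_m$ to be the $m$-fold connected sum of a 2-knot $J$ with $\gr_{\Q[t^{\pm 1}]} \A_\Q(J) = 1$ whose 1-handle distance from the unknot is exactly one, e.g.\ the spun trefoil $\sigma(3_1)$, for which $\A_\Q(\sigma(3_1)) \cong \Q[t^{\pm 1}]/(t^2 - t + 1)$. A Mayer--Vietoris computation yields $\A_\Q(K_m) \cong (\Q[t^{\pm 1}]/(t^2 - t + 1))^m$, and since $t^2 - t + 1$ is irreducible over $\Q[t^{\pm 1}]$ this module has generating rank exactly $m$. Performing a single unknotting stabilization on each connected summand then produces an unknotted genus-$m$ surface after $m$ stabilizations total.

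The main obstacle is this upper bound: while the Alexander-module lower bound is immediate from Construction~\ref{cons:standardcobordism} and Lemma~\ref{lemma:gen-rank-facts}, realizing the $m$ stabilizations explicitly and identifying the resulting surface with the standardly unknotted genus-$m$ surface in $S^4$ requires a hands-on geometric argument. In particular, one must verify that a single stabilization genuinely does unknot $\sigma(3_1)$ (or substitute a more tractable $J$, such as a ribbon 2-knot with a single fusion band whose stabilizing 1-handle can be read directly from the ribbon presentation).
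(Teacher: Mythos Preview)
Your approach is essentially the paper's: the lower bound via the generating rank of $\A_\Q$ using Construction~\ref{cons:standardcobordism} and Lemma~\ref{lemma:gen-rank-facts}\eqref{item:gen-rank-ses} is exactly Proposition~\ref{prop:change-in-alex-module-2-knot}, and the upper bound comes from exhibiting a 2-knot with cyclic Alexander module that unknots after one stabilization and then taking connected sums. The only difference is the choice of building block --- the paper uses the double $K_0 = D\cup_{9_{46}} D$ of the right-band slice disc for $9_{46}$ (with $\A_\Q(K_0)\cong\Q[t^{\pm1}]/(t-2)$) rather than the spun trefoil --- and then carries out precisely the hands-on verification you flag as the remaining obstacle, showing via Swenton's banded-unlink calculus that one stabilization of $K_0$ is the standard unknotted torus.
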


\begin{proof}[Proof of Theorem~\ref{thm:A}]
Let $m \in \mathbb{N}$, let $K$ be as in Proposition~\ref{prop:largedistancetotrivial}, and let $J$ be an unknotted 2-sphere.
Since every stabilization of an unknotted 2-sphere is an unknotted surface,
we obtain immediately that $d_1(K,J)=m$.
\end{proof}

The next proposition is the key algebraic input into the proof of Proposition~\ref{prop:largedistancetotrivial}.

\begin{prop}\label{prop:change-in-alex-module-2-knot}
Let $F_1 \subset S^4$ be a smoothly embedded oriented surface and suppose that $F_2$ is obtained from $F_1$ by a 1-handle stabilization.
Then there is a polynomial $p \in \Q[t^{\pm 1}]$ and a short exact sequence
  \[0 \to \Q[t^{\pm 1}]/\langle p \rangle \to H_1(S^4 \ssm \nu F_1;\Q[t^{\pm 1}]) \to H_1(S^4\ssm \nu F_2;\Q[t^{\pm 1}]) \to 0.\]
\end{prop}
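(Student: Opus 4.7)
The plan is to exploit the ambient cobordism $X_T$ between $X_{F_1}$ and $X_{F_2}$ built in Construction~\ref{cons:standardcobordism} and read off the desired short exact sequence from the long exact sequences of the pairs $(X_T,X_{F_i})$ with $\Q[t^{\pm 1}]$ coefficients. Assuming $F_1$ is connected, Alexander duality gives $H_1(X_{F_1};\Z) \cong \Z$, and the integral isomorphisms $H_1(X_{F_1}) \cong H_1(X_T) \cong H_1(X_{F_2})$ from Construction~\ref{cons:standardcobordism} ensure the standard abelianization maps used to define twisted homology over $\Q[t^{\pm 1}]$ are compatible along the inclusions, so the pairs' long exact sequences exist in twisted homology.

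The main computation is that $H_k(X_T,X_{F_1};\Q[t^{\pm 1}]) \cong \Q[t^{\pm 1}]$ for $k=2$ and vanishes otherwise, while dually $H_k(X_T,X_{F_2};\Q[t^{\pm 1}]) \cong \Q[t^{\pm 1}]$ for $k=3$ and vanishes otherwise; I expect this to be the main step of the argument. This follows from the handle decomposition of Construction~\ref{cons:standardcobordism}, which presents $X_T$ as $X_{F_1} \times I$ with a single 5-dimensional 2-handle attached along a circle representing the \emph{null-homologous} element $\gamma = \mu_1 \beta \mu_2^{-1}\beta^{-1} \in \pi_1(X_{F_1})$. Because $\gamma$ is null-homologous it lifts to a loop in the infinite cyclic cover $X_{F_1}^{\infty}$, so by excision the relative cellular chain complex $C_*(X_T^{\infty},X_{F_1}^{\infty})$ has exactly one free $\Z[t^{\pm 1}]$-generator in degree $2$ and is trivial elsewhere. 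Tensoring with $\Q$ yields the claimed twisted homology, and the dual 3-handle decomposition gives the analogous computation relative to $X_{F_2}$.

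Feeding these into the long exact sequence of the pair $(X_T,X_{F_2})$ shows that inclusion induces an isomorphism $H_1(X_{F_2};\Q[t^{\pm 1}]) \xrightarrow{\cong} H_1(X_T;\Q[t^{\pm 1}])$, while the sequence for $(X_T,X_{F_1})$ provides the exact sequence
\[ \Q[t^{\pm 1}] \cong H_2(X_T,X_{F_1};\Q[t^{\pm 1}]) \xrightarrow{\partial} H_1(X_{F_1};\Q[t^{\pm 1}]) \to H_1(X_T;\Q[t^{\pm 1}]) \to 0. \]
Composing the surjection at the right end with the inverse of the $(X_T,X_{F_2})$ isomorphism yields a surjection $H_1(X_{F_1};\Q[t^{\pm 1}]) \twoheadrightarrow H_1(X_{F_2};\Q[t^{\pm 1}])$ whose kernel equals $\Imm(\partial)$. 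Since this image is a cyclic $\Q[t^{\pm 1}]$-module and $\Q[t^{\pm 1}]$ is a PID, it is isomorphic to $\Q[t^{\pm 1}]/\langle p \rangle$ for some $p \in \Q[t^{\pm 1}]$, completing the construction of the desired short exact sequence.
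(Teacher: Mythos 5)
Your proof is correct and follows essentially the same route as the paper's: both pass through the cobordism $X_T$ from Construction~\ref{cons:standardcobordism}, lift its relative handle decomposition (one $2$-handle on one side, dually one $3$-handle on the other) to the infinite cyclic cover to compute $H_*(X_T,X_{F_i};\Q[t^{\pm 1}])$, and then read the desired short exact sequence out of the two long exact sequences of pairs, using that $\Q[t^{\pm 1}]$ is a PID at the end. The only cosmetic difference is that you phrase the main computation via the relative cellular chain complex of the $\Z$-cover while the paper invokes excision, but these amount to the same observation.
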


\begin{proof}
  We consider the relative cobordism $X_T$ between $X_{F_1}$ and $X_{F_2}$ from Construction~\ref{cons:standardcobordism}, with $W=S^4$. We will consider  the infinite cyclic cover $\widetilde{X}_T$.  Recall that $X_T$ is obtained from $X_{F_1} \times I$ by attaching a single 5-dimensional 2-handle along $\gamma \times \{1\}$ for  $\gamma= \mu_1 \beta \mu_2^{-1} \beta^{-1}$, where $\mu_1$ and $\mu_2$ are meridians of $F_1$ in $S^4$ near the attaching spheres of the 1-handle and $\beta$ is a parallel push-off of the core of this 1-handle.  Since $H_1(F_1;\Z) \cong \Z$, and the attaching sphere of the 2-handle is null homologous, the abelianization homomorphism $\pi_1(X_{F_1}) \to \Z$ extends to a homomorphism $\pi_1(X_T) \to \Z$.
  From now on in this proof we consider homology with $\Q[t^{\pm 1}]$-coefficients induced by this homomorphism.   We also note that the handle decomposition lifts to a relative handle decomposition of $\widetilde{X}_T$ with one orbit of 2-handles under the deck transformation action of~$\Z$.

Using this relative handle decomposition we  obtain that $H_k(X_T, X_{F_1}; \Q[t^{\pm 1}])=0$ for $k \neq 2$ and $H_2(X_T, X_{F_1}; \Q[t^{\pm 1}]) \cong \Q[t^{\pm 1}]$. Since dually $X_T$ is obtained from $X_{F_2} \times I$ by attaching a single 5-dimensional 3-handle, we have that $H_k(X_T,  X_{F_2}\Q[t^{\pm 1}])=0$ for $k \neq 3$.
Now consider the long exact sequence of the pair $(X_T,X_{F_1})$ with $\Q[t^{\pm 1}]$-coefficients.
     \[\cdots \to H_2(X_T) \to H_2(X_T,X_{F_1}) \to H_1(X_{F_1}) \to H_1(X_T) \to H_1(X_T,X_{F_1}).\]
Since $H_1(X_T,X_{F_1}) =0$ and $H_2(X_T,X_{F_1}) \cong \Q[t^{\pm 1}]$, and since $\Q[t^{\pm 1}]$ is a PID, this yields a short exact sequence
     \[0 \to \Q[t^{\pm 1}]/ \langle p \rangle \to H_1(X_{F_1}) \to H_1(X_T) \to 0\]
for some $p \in \Q[t^{\pm 1}]$.
Now the long exact sequence of the pair $(X_T,X_{F_2})$ yields
     \[0=H_2(X_T,X_{F_2}) \to H_1(X_{F_2}) \to H_1(X_T) \to H_1(X_T,X_{F_2})=0,\]
from which it follows that the inclusion induced map $H_1(X_{F_2}) \to H_1(X_T)$ is an isomorphism, and so we obtain the desired short exact sequence
     \[0 \to \Q[t^{\pm 1}]/ \langle p \rangle \to H_1(X_{F_1}) \to H_1(X_{F_2}) \to 0. \qedhere\]
\end{proof}

For the reader's convenience, we now describe two common constructions of slice discs.
\begin{cons}\label{cons:slicediscs}
Given a subset $Y \subseteq S^3$ and $J \subseteq I$ that is either an interval $[a,b]$ or a point $\{a\}$, write $Y_J$ for $Y \times J \subseteq S^3 \times I$.
We think of $D^4$ as $D^4 \cong S^3_{[0,1]}/ S^3_ {1}$.

\textit{The banding construction.}
Let $K$  be a knot with disjointly embedded bands $\beta_1, \dots, \beta_n$ in $S^3$ such that the result of banding $K$ via $\{\beta_i\}_{i=1}^n$ is the $(n+1)$-component unlink $U_{n+1}$, which could be capped off via $(n+1)$ discs in $S^3$.
Then, up to smoothing corners,
\[D:= K_{[0,1/3]} \cup   \left(\cup_{i=1}^n \beta_i\right)_{1/3} \cup  (U_{n+1})_{[1/3, 2/3]} \cup   \left(\cup_{i=1}^{n+1} D^2\right)_{2/3} \]
is a ribbon disc for $K$.

\textit{The surgery construction.}
Let $K$ be a knot with a genus $g$ Seifert surface $F$ and a collection of $g$ disjoint curves $\alpha_1, \dots, \alpha_g \subset F$ which are 0-framed by $F$ and which generate a $\Z^g$ summand of $H_1(F)$. Suppose also that the link $\cup_{i=1}^g \alpha_i \subset S^3$ is an unlink. Then, up to smoothing corners,
\[ D= K_{[0,1/3]} \cup (F \smallsetminus \nu(\cup_{i=1}^g \alpha_i))_{1/3}  \cup \cup_{i=1}^g (\alpha_i^+ \sqcup \alpha_i^-)_{ [1/3, 2/3]} \cup \cup_{i=1}^n (D^2 \sqcup D^2)_{2/3} \]
is a ribbon disc for $K$. We note that this construction is easily adapted to build a slice disc for $K$ under the weaker assumption that $\cup_{i=1}^g \alpha_i$ is merely strongly slice.
\end{cons}

\begin{exl}[The knot $9_{46}$ and its two standard slice discs.]\label{exl:946}
Let $R:=9_{46}$, and let $D_j$ for $j=1,2$ be the slice discs indicated by the left and right bands, respectively, of the left part of Figure~\ref{fig:946}.
\begin{figure}[ht]
  \includegraphics[height=5cm]{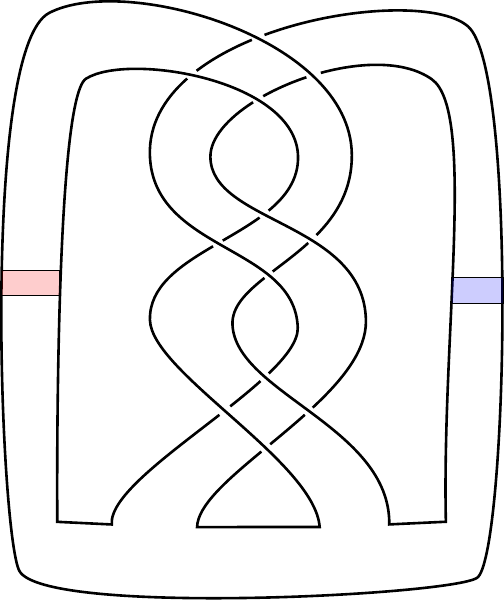}\hspace{1cm}
  \includegraphics[height=5cm]{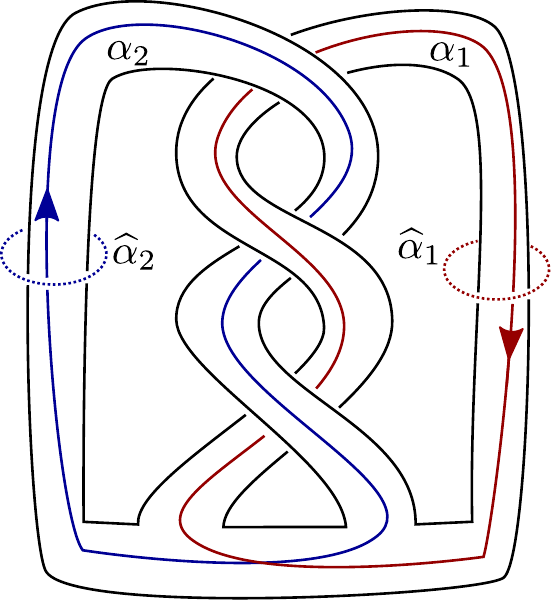}
  \caption{The knot $R=9_{46}$ has slice discs  $D_1$ (left band) and $D_2$ (right band).}
  \label{fig:946}
\end{figure}
Observe that $R$ has a genus 1 Seifert surface $F$ (illustrated on the right of Figure~\ref{fig:946}), and for $j=1,2$ let $D_j'$ be the slice disc  obtained by surgery of $F$  along $\alpha_j$.
Referring back to Construction~\ref{cons:slicediscs} for our explicit description of $D_j$ and $D_j'$, we can recognize these as isotopic discs in $D^4$, since
 \[R_{[1/6,1/3]} \cup   (\beta_j )_{1/3}  \, \cup\,  (U_2^j)_{[1/3, 2/3]} \subset D_j \text{ and } R_{[1/6, 1/3]} \cup (F \smallsetminus \nu(\alpha_j))_{1/3}  \cup (\alpha_j^+ \sqcup \alpha_j^-)_{[1/3, 2/3]} \subset D_j'\]
 are isotopic rel.\ boundary
as subsets of $S^3 \times [1/6, 2/3]$.

The oriented curves $\alpha_1, \alpha_2$  represent a basis for $H_1(F)$ with respect to which the Seifert pairing is given by
\[A=\left[\begin{array}{cc}0& 2 \\ 1 & 0 \end{array} \right].\] The Alexander module is therefore presented by \[tA-A^ T=\left[\begin{array}{cc}0& 2t-1 \\ t-2 & 0 \end{array} \right],\] and hence is isomorphic to $\Z[t^{\pm 1}]/ \langle t-2 \rangle \oplus \Z[t^{\pm1}]/ \langle 2t-1 \rangle $, where $\widehat{\alpha}_1$ and $\widehat{\alpha}_2$ represent the generators of each summand.

Moreover, the inclusion induced maps $\iota_j \colon \A_{\Q}(R) \to \A_{\Q}(D_j)$ are given by projection onto summands:
\begin{align*}
\A_{\Q}(R) \cong \Q[t^{\pm1}]/ \langle 2t-1 \rangle \oplus  \Q[t^{\pm1}]/ \langle t-2 \rangle
&\xrightarrow{\iota_1} \Q[t^{\pm1}]/ \langle 2t-1 \rangle \cong \A_{\Q}(D_1) \\
(x,y) &\mapsto x\\
\A_{\Q}(R) \cong \Q[t^{\pm1}]/ \langle 2t-1 \rangle \oplus  \Q[t^{\pm1}]/ \langle t-2 \rangle
&\xrightarrow{\iota_2} \Q[t^{\pm1}]/ \langle t-2\rangle \cong \A_{\Q}(D_2)  \\
(x,y) &\mapsto y.
\end{align*}
Note that $\ker(\iota_1) \cap \ker(\iota_2)= \{0\} \subseteq \A_{\Q}(R)$.

A detailed computation with these slice discs can be found in \cite[Section~5.1]{Conway-Powell}. To see that the induced maps are as claimed, we argue by  the rising water principle~\cite[Section~6.2]{Gompf-stipsicz-book}. There is a handle decomposition of $X_{D_i}$ relative to $X_R$ consisting of one 2-handle attached along $\widehat{\alpha}_i$ (corresponding to the band), followed by two 3-handles corresponding to the maxima, and a 4-handle. Only the 2-handle affects first homology, by killing the class represented by $\widehat{\alpha}_i$.
\end{exl}

\begin{proof}[Proof of Proposition~\ref{prop:largedistancetotrivial}]
Let $D:=D_2 \subset D^4$ be the ``right band'' slice disc for the $9_{46}$ knot shown via a blue band on the left of Figure~\ref{fig:946}. Let $K_0$ be the 2-knot obtained from doubling this disc, that is $K_0= D \cup_{9_{46}} D \subset D^4 \cup D^4 = S^4$.
Let $K := \#_{i=1}^{m} K_0$.

First we use Proposition~\ref{prop:change-in-alex-module-2-knot} to show that if $K$ stabilizes to an unknotted surface by $n$ 1-handle additions then $n \geq m$.
We know that
\begin{align*}
H_1(S^3 \ssm \nu(9_{46}); \Q[t^{\pm 1}]) &\cong \Q[t^{\pm 1}]/ \langle 2t-1 \rangle \oplus \Q[t^{\pm 1}]/ \langle t-2 \rangle
\end{align*}
where the inclusion induced map to $H_1(D^4 \ssm \nu(D);  \Q[t^{\pm 1}]) \cong \Q[t^{\pm 1}]/ \langle t-2 \rangle$ is given by projection onto the second factor.
By using the Mayer-Vietoris sequence corresponding to the decomposition
\[S^4 \ssm \nu K_0= \left( D^4 \ssm \nu(D)\right) \cup_{S^3 \ssm \nu(9_{46})}\left( D^4 \ssm \nu(D)\right),\]
we can compute that
\[H_1(S^4 \ssm \nu K_0;\Q[t^{\pm 1}]) = \Q[t^{\pm 1}]/(t-2).\]
Since Alexander modules are additive under connected sum of 2-knots we therefore have that
 \[ H_1(S^4 \ssm \nu K;\Q[t^{\pm 1}]) = \bigoplus_{i=1}^m \left( \Q[t^{\pm 1}]/(t-2)\right).\]

We therefore need to show that one requires at least $m$ stabilizations to trivialize the Alexander module of $K$.
Note that the generating rank of  $H_1(S^4 \ssm \nu K;\Q[t^{\pm 1}])$ is $m$.
We claim that the result of stabilizing an embedded surface whose Alexander module has generating rank $k$ is an embedded surface with generating rank at least $k-1$.
To see the claim, we use Proposition~\ref{prop:change-in-alex-module-2-knot} and the fact that if a $\Q[t^{\pm 1}]$-module $M$ has generating rank $k$ and a submodule $N$ has generating rank 1, then the quotient $M/N$ has generating rank at least $k-1$, by Lemma~\ref{lemma:gen-rank-facts}~(\ref{item:gen-rank-ses}). 
By the claim and the fact that the generating rank of $H_1(S^4 \ssm \nu K;\Q[t^{\pm 1}])$ is $m$, it follows by induction that $d_{1}(K,J) \geq m$.
\\

It remains to show that we can make $K$ unknotted via $m$ 1-handle attachments.
Recall that the slice disc $D$ is constructed by a band move ``cutting'' one of the bands of the obvious Seifert surface $\Sigma$ for $9_{46}$ in Figure~\ref{fig:946}, and then capping off the resulting 2-component unlink with disjoint discs.  A single stabilization, tubing these two discs together, results in an embedded genus one surface. This surface could also be obtained  by capping off the 2-component unlink with an annulus instead of two discs, and hence is isotopic to the result of pushing the aforementioned Seifert surface into $D^4$.
We assert that $D \cup \Sigma \subset S^4$ is an unknotted genus one surface, and prove this by direct manipulation of  handle diagrams for the embedding of the surface in $D^4$, using the banded knot diagram moves  of Swenton~\cite{Swenton}.\footnote{The reader who is familiar with doubly slice knots may instead observe that $D \cup \Sigma$ is a stabilization of the unknotted 2-knot obtained by gluing the `left band' and `right band' discs together, and hence is itself unknotted.  We give the longer argument here to be self-contained.}

The data of an unlink and bands attached to it with the property that the result of performing the corresponding band moves is also an unlink provides instructions for embedding a surface in $S^4$: the unlink's components correspond to 0-handles, the bands to 1-handles, and the unlink obtained by banding can be capped off with 2-handles in an essentially unique way, in the sense that any two choices of discs in $S^3$ capping off the unlink yield isotopic surfaces in $S^4$. This uses the main result of \cite{livingston-surfaces},
that any two sets of embedded discs in $S^3$ are isotopic rel.\ boundary in~$D^4$. We remark that isotopy of banded knot diagrams in $S^3$ together with cancellation/ creation  of band-unknot pairs,  sliding of bands across each other,  and the `band-swim move' illustrated in Figure~\ref{fig:swimming} preserve the isotopy class of the presented surface (see Swenton~\cite{Swenton} for more details).
\begin{figure}[h]
\[
\begin{array}{ccc}
\includegraphics[align=c, height=2cm]{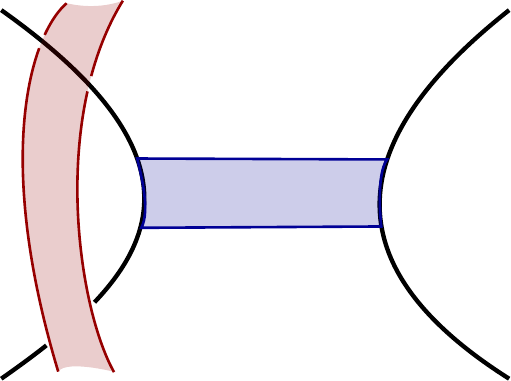}
& \leftrightarrow
& \includegraphics[align=c, height=2cm]{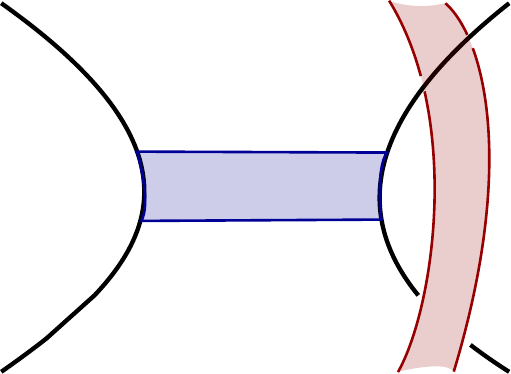}
\end{array}
\]
\caption{A  `band-swim' move preserves the isotopy class of a surface presented by a banded knot diagram.}
\label{fig:swimming}
\end{figure}

The banded diagram on the far left of Figure~\ref{fig:bandeddiagram} gives $D \cup \Sigma$.  The top two bands correspond to the Seifert surface, and the green band is the band of the disc~$D$.
\begin{figure}[h]
\includegraphics[align=t, height=3.3cm]{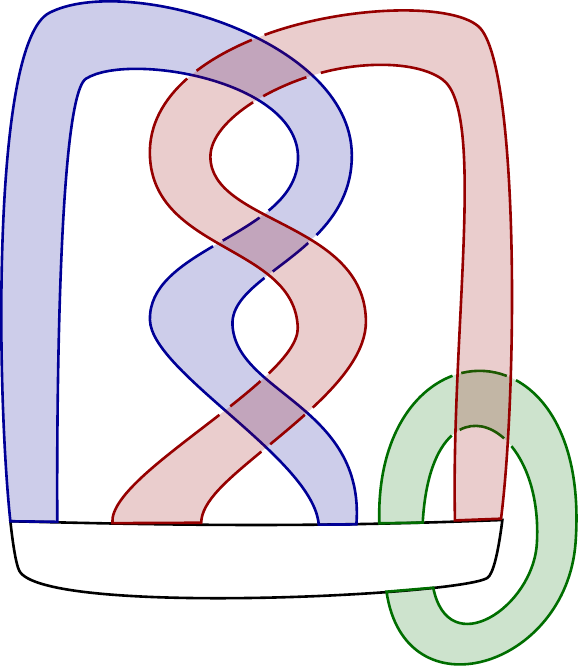}
\hspace{.5cm}
\includegraphics[align=t, height=3.3cm]{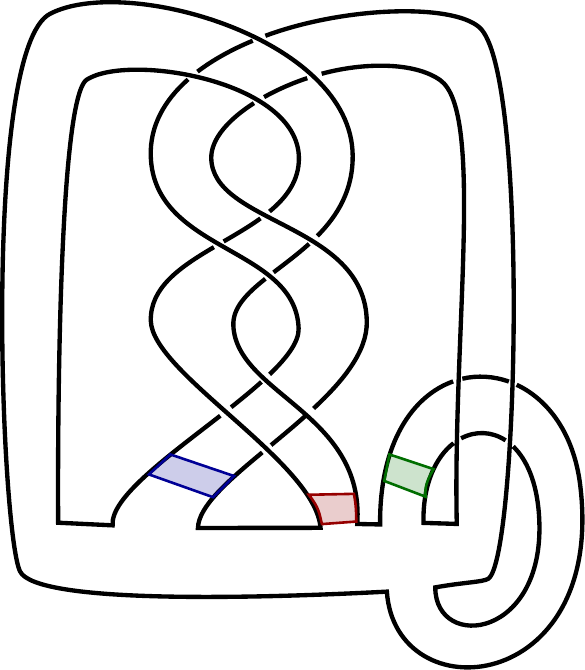}
\hspace{.5cm}
\includegraphics[align=t, height=3cm]{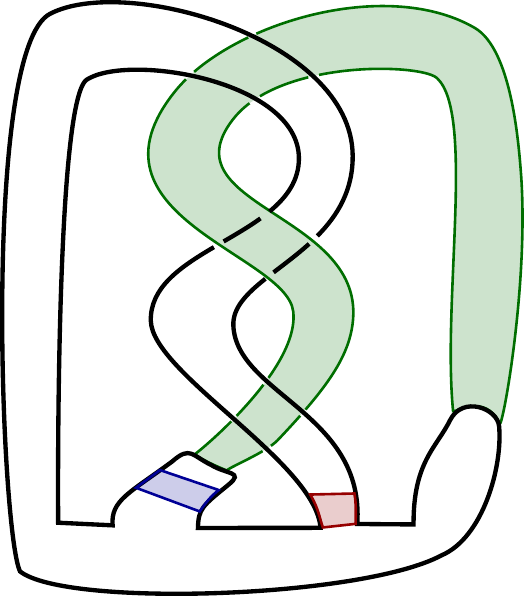}
\hspace{.5cm}
\includegraphics[align=t, height=3cm]{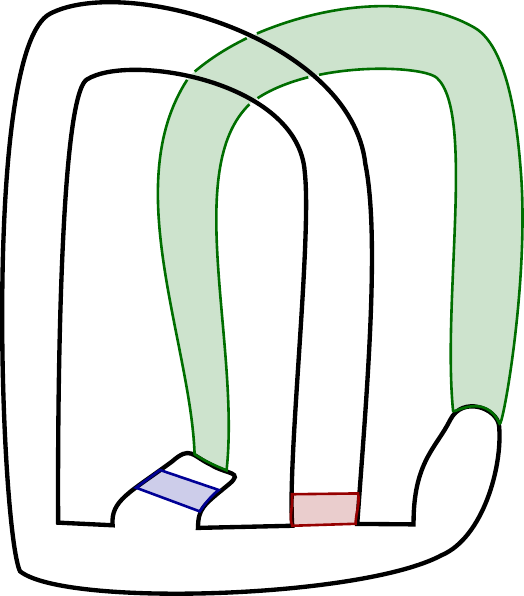}
\caption{Simplifying a banded knot diagram for $D \cup \Sigma$.}
\label{fig:bandeddiagram}
\end{figure}
The  center left of Figure~\ref{fig:bandeddiagram} gives the `dual' band description corresponding to turning our handle diagram upside down.
The center right figure is obtained by an isotopy of the banded diagram in $S^3$, and we perform a `band-swim' move of the green band through the red band to obtain the diagram on the far right of Figure~\ref{fig:bandeddiagram}.

Now obtain the diagram on the left of Figure~\ref{fig:bd2} by an isotopy of the diagram in $S^3$, before sliding the green band across the red band to obtain the central diagram.
\begin{figure}[h]
\includegraphics[align=c, height=2cm]{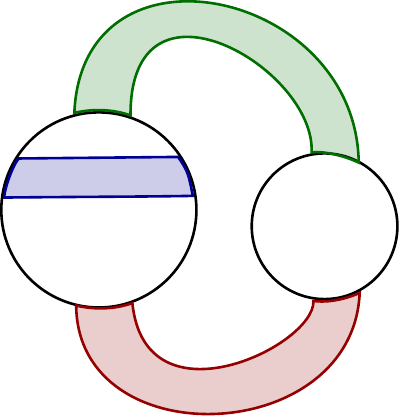}
\hspace{1cm}
\includegraphics[align=c, height=2cm]{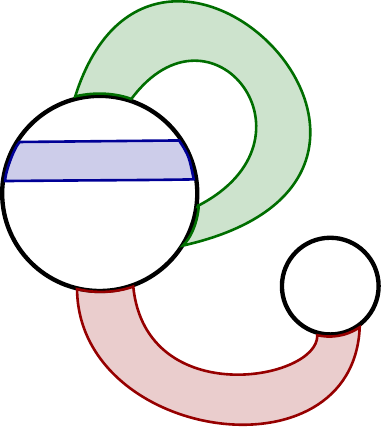}
\hspace{1cm}
\includegraphics[align=c, height=1.5cm]{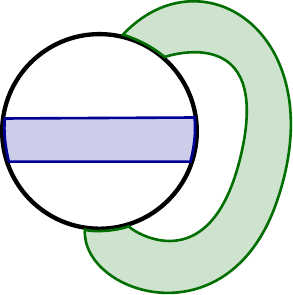}
\caption{Further simplifications of the banded knot diagram for $D \cup \Sigma$, resulting in the standard diagram for an unknotted torus (right).}
\label{fig:bd2}
\end{figure}
We can then cancel the right-hand unknot with the red band, corresponding to canceling a pair of 0- and 1-handles, in order to obtain the standard diagram for an unknotted torus seen on the right of Figure~\ref{fig:bd2}.
\end{proof}

\section{Pairs of slice discs with large generalized stabilization distance}\label{section:slice discs-1}

In this section we prove Theorem~\ref{thm:B}. We use the classical Alexander module to show that for every nonnegative integer $m$ there is a knot $K$ with slice discs $D$ and $D'$ such that $d_2(D, D')$ equals $m$.
To do this, we investigate the kernel of the induced map on fundamental groups from the knot exterior to the slice disc exteriors by using the homology of cyclic covering spaces.

First, we note that connected sum with a knotted 2-sphere has no effect on the kernel of the map on fundamental groups.

\begin{prop}\label{prop:2spheresum}
Suppose that $F_2$ has been obtained from $F_1$ by connected sum with a knotted 2-sphere $S$.
Then
\[ \ker( i_1\colon \pi_1(X_K) \to \pi_1(X_{F_1})) = \ker( i_2\colon \pi_1(X_K) \to \pi_1(X_{F_2})).\]
\end{prop}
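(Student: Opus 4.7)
The plan is to exhibit a homomorphism $j\colon \pi_1(X_{F_1}) \to \pi_1(X_{F_2})$ satisfying $i_2 = j \circ i_1$ and $j$ injective; together these give $\ker(i_2) = \ker(j \circ i_1) = \ker(i_1)$, which is the desired conclusion.

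I would start from the definition $(W, F_2) = (W, F_1) \#(S^4, S)$: the connected sum is formed by removing a ball-disc pair $(B, D_1) \subset (W, F_1)$ and $(B_2, D_2) \subset (S^4, S)$ from the interiors and gluing along the common boundary $(S^3, J)$, where $J = \partial D_1 = \partial D_2$ is an unknot. In particular the operation takes place in the interior of $W$, so $K = \partial F_1 = \partial F_2$ and $X_K$ sits identically inside $\partial X_{F_1}$ and $\partial X_{F_2}$. Taking exteriors gives
\[X_{F_2} = Y_1 \cup_Z Y_2,\]
where $Y_1 = (W \setminus B) \setminus \nu(F_1 \setminus D_1)$, $Y_2 = (S^4 \setminus B_2) \setminus \nu(S \setminus D_2)$, and $Z = S^3 \setminus \nu(J)$ is a solid torus with $\pi_1 \cong \mathbb{Z}$ generated by a meridian of $F_1 \# S$. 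Analogously, $X_{F_1} = Y_1 \cup_Z (B \setminus \nu(D_1))$, and since $B \setminus \nu(D_1) \cong S^1 \times D^3$ with the inclusion $Z \hookrightarrow S^1 \times D^3$ sending the meridian to the generator, Seifert--van Kampen shows that $\pi_1(Y_1) \to \pi_1(X_{F_1})$ is an isomorphism. Composing its inverse with the inclusion $Y_1 \hookrightarrow X_{F_2}$ defines $j$, and $i_2 = j \circ i_1$ follows from the chain of inclusions $X_K \subseteq Y_1 \subseteq X_{F_1}$ and $X_K \subseteq Y_1 \subseteq X_{F_2}$.

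For injectivity of $j$, I would apply Seifert--van Kampen to the decomposition $X_{F_2} = Y_1 \cup_Z Y_2$, combined with the analogous isomorphisms $\pi_1(Y_1) \cong \pi_1(X_{F_1})$ and $\pi_1(Y_2) \cong \pi_1(X_S)$, to identify $\pi_1(X_{F_2})$ with the amalgamated free product $\pi_1(X_{F_1}) *_{\mathbb{Z}} \pi_1(X_S)$, under which $j$ becomes the canonical inclusion of the $\pi_1(X_{F_1})$ factor. Since the meridian maps to a generator of $H_1(X_{F_1}) \cong \mathbb{Z}$ and of $H_1(X_S) \cong \mathbb{Z}$, it has infinite order in each factor, so the amalgamating $\mathbb{Z}$ embeds into both factors; the normal form theorem for amalgamated free products then yields that each canonical factor inclusion is injective. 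In particular $j$ is injective, and the proposition follows. The main obstacle is the careful bookkeeping in the two Van Kampen decompositions to verify that the amalgamating subgroup really is the meridional $\mathbb{Z}$ common to both factors; once that identification is in place, the injectivity conclusion is formal.
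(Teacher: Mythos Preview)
Your proof is correct and follows essentially the same approach as the paper: both use Seifert--van Kampen on the decomposition $X_{F_2} = X_{F_1} \cup_{S^1 \times D^2} X_S$ to identify $\pi_1(X_{F_2}) \cong \pi_1(X_{F_1}) *_{\Z} \pi_1(X_S)$, then observe that the inclusion-induced map factors through the canonical injection of $\pi_1(X_{F_1})$ into the amalgamated product. Your version is in fact more explicit than the paper's in justifying why the amalgamating $\Z$ injects into each factor (via the meridian being infinite order in $H_1$) and hence why the factor inclusion is injective; the paper states this without further comment.
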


\begin{proof}
Let $X_S:= S^4 \setminus \nu S$ be the exterior of $S$ in $S^4$.
Construct $X_{F_2}$ from $X_{F_1}$ and $X_{S}$ by identifying thickened meridians $S^1 \times D^2 \subset \partial X_{F_1}$ and  $S^1 \times D^2 \subset \partial X_{S}$ in the boundaries and smoothing corners.  By the Seifert-van Kampen theorem we have that
\[\pi_1(X_{F_2}) \cong \pi_1(X_{F_1}) *_{\Z} \pi_1(X_{S}).  \]
So $\pi_1(X_{F_1})$ is isomorphic to a subgroup of $ \pi_1(X_{F_2})$ in such a way that the inclusion-induced maps factor as
\[\pi_1(X_{F_1}) \hookrightarrow \pi_1(X_{F_1}) *_{\Z} \pi_1(X_{S}) \xrightarrow{\cong} \pi_1(X_{F_2}).\]
It follows that $\ker(i_1)=\ker(i_2)$.
\end{proof}

The following proposition is central to the rest of the paper, and so we state it in some generality. In particular, in later sections we will want to apply this result with twisted coefficients, so in the name of efficiency we state and prove the full version here.

\begin{prop}\label{prop:kerneloftwistedH1}
Let $F_1$ and $F_2$ be properly embedded  surfaces in $D^4$ with $\partial F_j= K$, where
$F_2$ has been obtained from $F_1$ by $g$  1-handle additions such that $g(F_2)= g(F_1)+g$.
Let $T\subseteq D^4 \times I$ be the 3-manifold  built as in Construction~\ref{cons:standardcobordism}.
Suppose that $\phi \colon \pi_1(X_K) \to \GL_m(R)$ extends over $\pi_1(X_T)$ to a map $\Phi \colon \pi_1(X_T) \to \GL_m(R)$.
For $j=1, 2$ define
\[P_j:= \ker\left( H_1^{\phi}(X_K;R) \to H_1^{\Phi} (X_{F_j}; R) \right).\]
Then $P_1\subseteq P_2$ and, assuming in addition that $R$ is a PID, $P_2$ is generated as an $R$-module by $P_1 \cup \{x_i\}_{i=1}^{gm}$ for some choice of  $x_i \in P_2$.
\end{prop}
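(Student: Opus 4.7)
The plan is to mirror the strategy of Proposition~\ref{prop:change-in-alex-module-2-knot}, analyzing the twisted first homology of the cobordism $X_T$ from Construction~\ref{cons:standardcobordism}. Since $\Phi$ extends $\phi$, I can pull it back along the inclusion-induced maps (using the arc $\alpha$ of Construction~\ref{cons:standardcobordism} to handle the basepoint change for $X_{F_2}$) to obtain compatible twisted coefficient systems on $X_K$, $X_{F_1}$, $X_{F_2}$, and $X_T$. All the inclusion-induced maps on twisted homology then fit into a commutative diagram.

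The key computation is the relative twisted homology of the pairs $(X_T, X_{F_j})$. Directly generalizing the discussion in Construction~\ref{cons:standardcobordism} from $g=1$ to arbitrary $g$, the cobordism $X_T$ admits a relative handle decomposition on $X_{F_1}$ consisting of exactly $g$ five-dimensional 2-handles and no other relative handles; dually, it is built from $X_{F_2}$ by attaching $g$ five-dimensional 3-handles. The relative cellular chain complex with $\Z[\pi_1(X_T)]$-coefficients is therefore free of rank equal to the number of relative cells in each degree, so tensoring with $R^m$ via $\Phi$ over $\Z[\pi_1(X_T)]$ gives
\[
H_k(X_T, X_{F_1}; R^m_{\Phi}) \cong \begin{cases} R^{gm} & k=2, \\ 0 & k\neq 2, \end{cases} \qquad H_k(X_T, X_{F_2}; R^m_{\Phi}) \cong \begin{cases} R^{gm} & k=3, \\ 0 & k\neq 3. \end{cases}
\]

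The long exact sequences of these pairs show that the inclusion-induced map $\iota_{2*}\colon H_1^{\Phi}(X_{F_2}; R) \to H_1^{\Phi}(X_T; R)$ is an isomorphism, while $\iota_{1*}\colon H_1^{\Phi}(X_{F_1}; R) \to H_1^{\Phi}(X_T; R)$ is surjective with kernel $N$ equal to the image of the connecting map from $R^{gm}$; in particular, $N$ is $R$-generated by at most $gm$ elements. Writing $\alpha_j\colon H_1^{\phi}(X_K; R) \to H_1^{\Phi}(X_{F_j}; R)$ for the maps induced by inclusion, commutativity of the diagram gives $\iota_{1*} \circ \alpha_1 = \iota_{2*} \circ \alpha_2$, and since $\iota_{2*}$ is an isomorphism we obtain $P_2 = \ker(\alpha_2) = \alpha_1^{-1}(N)$. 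This immediately yields $P_1 = \ker(\alpha_1) \subseteq P_2$, and $\alpha_1$ induces an isomorphism $P_2/P_1 \xrightarrow{\cong} \alpha_1(P_2) \subseteq N$. When $R$ is a PID, Lemma~\ref{lemma:gen-rank-facts}(\ref{item:gen-rank-subgroup}) shows that $\alpha_1(P_2)$, being a submodule of $N$, is $R$-generated by at most $gm$ elements; lifting these generators to $\{x_i\}_{i=1}^{gm} \subset P_2$ expresses $P_2$ as $R$-generated by $P_1 \cup \{x_i\}_{i=1}^{gm}$, as desired.

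The main obstacle I anticipate is the twisted relative homology computation, specifically the identification $H_2(X_T, X_{F_1}; R^m_{\Phi}) \cong R^{gm}$. Because $\pi_1(X_{F_1}) \to \pi_1(X_T)$ need not be injective (the attaching curves $\gamma$ become trivial in $\pi_1(X_T)$ but may be nontrivial in $\pi_1(X_{F_1})$), one must lift the relative handle decomposition carefully to the universal cover of $X_T$ to confirm that $C_*(X_T, X_{F_1}; \Z[\pi_1(X_T)])$ is concentrated in degree $2$ and free of rank $g$. Once that is in place, the remaining steps are a short diagram chase and the standard generating-rank bound for submodules over a PID.
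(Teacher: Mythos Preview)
Your argument is correct. The core inputs---the relative twisted homology computations $H_*^\Phi(X_T,X_{F_j})$ coming from the handle structure, and the generating-rank bound for submodules over a PID---are exactly what the paper uses. Your diagram chase is somewhat more direct than the paper's: rather than passing through the long exact sequences of the triples $(X_T,X_{F_j},X_K)$ and a larger commutative diagram involving $H_2^\Phi(X_T,X_K)$ and the maps $g_j$, you observe immediately from the pair sequences that $\iota_{2*}$ is an isomorphism and $\iota_{1*}$ is surjective with kernel $N$ a quotient of $R^{gm}$, whence $P_2=\alpha_1^{-1}(N)$ and $P_2/P_1\hookrightarrow N$. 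The paper instead bounds $\gr(\coker g_1)$ as a submodule of $H_2^\Phi(X_T,X_{F_1})\cong R^m$ (working one handle at a time) and then pushes generators forward under the connecting map $\partial_T$. Both routes invoke Lemma~\ref{lemma:gen-rank-facts}(\ref{item:gen-rank-subgroup}) at the analogous spot; yours simply packages the bookkeeping more efficiently by working in $H_1$ rather than $H_2$.
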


\begin{proof} The case of general $g$ follows immediately from repeated application of the $g=1$ case, which we now prove.



Recall that $X_T$ is obtained from $X_{F_1} \times I$ by attaching a single 5-dimensional 2-handle along $\gamma \times \{1\}$ for $\gamma$ a simple closed curve representing $[\gamma]= \mu_1 \beta \mu_2^{-1} \beta^{-1}$ in $\pi_1(X_{F_1})$, where $\mu_1$ and $\mu_2$ are meridians of $F_1$ in $D^4$ near the attaching spheres of the 1-handle, and $\beta$ is a parallel push-off of the core of this 1-handle.

There is a CW pair $(X_T^{CW},X_{F_1}) \simeq (X_T,X_{F_1})$ where $X_T^{CW}$ is a CW complex obtained by attaching a single 2-cell to $X_{F_1}$ along $\gamma$. The universal cover $\wt{X}_T^{CW} \to X_T^{CW}$ induces a pull-back covering $\wt{X}_{F_1} \to X_{F_1}$, with relative cellular chain complex
\[C_*(\wt{X}_T^{CW},\wt{X}_{F_1}) \simeq C_*(\wt{X}_T,\wt{X}_{F_1})\]
with $C_2(\wt{X}_T^{CW},\wt{X}_{F_1}) \cong \Z[\pi_1(X_T)]$ and $C_k(\wt{X}_T^{CW},\wt{X}_{F_1})=0$ for $k \neq 2$.
By tensoring with~$R^m$ we have that
\[C_k^{\Phi}(X_T^{CW},X_{F_1};R) \cong C_k(\wt{X}_T^{CW},\wt{X}_{F_1}) \otimes_{\Z[\pi_1(X_T)]} R^m \]
is isomorphic to $R^m$ for $k=2$ and is zero otherwise.
Since $C_*^{\Phi}(X_T,X_{F_1};R) \simeq C_*^{\Phi}(X_T^{CW},X_{F_1};R)$, we therefore obtain that $H_k^\Phi(X_T, X_{F_1}; R)=0$ for $k \neq 2$ and $H_2^\Phi(X_T, X_{F_1}; R) \cong R^m$.

Since dually $X_T$ is obtained from $X_{F_2} \times I$ by attaching a single 5-dimensional 3-handle, we have that $H_k^\Phi(X_T,  X_{F_2};R)=0$ for $k \neq 3$.
For $j=1,2$ the long exact sequence in twisted homology with $R$-coefficients corresponding to the triple $(X_T, X_{F_j}, X_K)$ is
\begin{align} \label{eqn:twistedrelhomology}
\dots \to H_3^\Phi(X_T, X_{F_j}) \to H_2^\Phi(X_{F_j}, X_K) \xrightarrow{g_j} H_2^\Phi(X_T, X_K) \xrightarrow{h_j} H_2^\Phi(X_T, X_{F_j})\to \dots
\end{align}
and so we see that $g_2$ is surjective.

Now consider the following diagram, which is commutative since all maps are induced by various inclusions and natural long exact sequences.  The horizontal sequences come from long exact sequences of various pairs and all homology is appropriately twisted with coefficients in $R$.
\[
\begin{tikzcd}
H_2^\Phi(X_{F_1})\arrow{d} \arrow{r} & H_2^\Phi(X_{F_1}, X_K) \arrow{d}{g_1} \arrow{dr}{\partial_1} & & H_1^\Phi(X_{F_1}) \arrow{d} \\
H_2^\Phi(X_T) \arrow{r} & H_2^\Phi(X_T, X_K) \arrow{r}{\partial_T} &H_1^\phi(X_K)\arrow{dr}{j_2} \arrow{ur}{j_1} \arrow{r}{j_T} & H_1^\Phi(X_T) \\
H_2^\Phi(X_{F_2})  \arrow{u} \arrow{r} & H_2^\Phi(X_{F_2}, X_K) \arrow{u}{g_2} \arrow{ur}{\partial_2} & & H_1^\Phi(X_{F_2})\arrow{u}
\end{tikzcd}
\]
Since $g_2$ is surjective, we have that $P_2= \ker(j_2)= \Imm(\partial_2) = \Imm(\partial_T)$. Also,
\[P_1= \ker(j_1)= \Imm(\partial_1)= \Imm(\partial_T \circ g_1) \subseteq \Imm(\partial_T) = P_2.\]
So we have established the first conclusion of this proposition.

To establish the second conclusion, we recall from above that $H_2^{\Phi}(X_T, X_{F_1};R) \cong R^m$ has $R$-generating rank~$m$. Considering the long exact sequence of Equation~(\ref{eqn:twistedrelhomology}), we see that
\begin{align*}
\coker(g_1) = H_2^{\Phi}(X_T, X_K)/ \Imm(g_1)= H_2^{\Phi}(X_T, X_K)/ \ker(h_1) \cong \Imm(h_1)  \subseteq H_2^{\Phi}(X_T, X_{F_1})
\end{align*}
and so $\coker(g_1)$ has generating rank no more than $m$ as an $R$-module, by Lemma~\ref{lemma:gen-rank-facts}~(\ref{item:gen-rank-subgroup}).
We can therefore let $\{a_i\}_{i=1}^m$ be elements of $H_2(X_T, X_K)$ which represent generators of $\coker(g_1)$. Hence together with $\Imm(g_1)$  the $\{a_i\}_{i=1}^m$ generate $H_2(X_T, X_K)$ as an $R$-module.  Therefore $\partial_T(\Imm(g_1) \cup \{a_i\}_{i=1}^m)$ generates $\Imm \partial_T = P_2$. It follows that
\begin{align*}
P_1 \cup \{\partial_T(a_i)\}_{i=1}^m & =
 \Imm(\partial_1) \cup  \{ \partial_T(a_i)\}_{i=1}^m\\
&= \Imm(\partial_T \circ g_1) \cup  \{ \partial_T(a_i)\}_{i=1}^m \\
&= \partial_T(\Imm(g_1) \cup \{ a_i\}_{i=1}^m)
\end{align*}
generates $\Imm(\partial_T)=P_2$ as an $R$-module, and so we can let $x_i= \partial_T(a_i)$ for $i=1, \dots, m$.
\end{proof}


\begin{prop}\label{prop:showing-d_2-large}
  Let $\Delta_1$ and $\Delta_2$ be slice discs for a knot $K$.  Let $P_j:=\ker(\A_{\Q}(K) \to \A_{\Q}(\Delta_j))$ for $j=1,2$.
  Suppose that $\gr(P_1)=\gr(P_2)=n$ and that $\gr(P_1 \cap P_2)=k$. Then $d_2(\Delta_1,\Delta_2) \geq n-k$.
\end{prop}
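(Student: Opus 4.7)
The plan is to reduce the lower bound on $d_2(\Delta_1, \Delta_2)$ to a generating-rank estimate on submodules of $\A_{\Q}(K)$, by tracking how $P_j$ evolves along a sequence of moves connecting stabilizations of $\Delta_1$ and $\Delta_2$. Setting $d := d_2(\Delta_1,\Delta_2)$, I would use Proposition~\ref{prop:2spheresfirst} to rearrange so that each $\Delta_j$ is first summed with finitely many local 2-knots and then 1-handle stabilized at most $d$ times, producing surfaces $\Sigma_1$ and $\Sigma_2$ that are ambiently isotopic rel.\ boundary. Since this isotopy fixes $K$, the inclusion-induced maps $\A_{\Q}(K) \to \A_{\Q}(\Sigma_j)$ have a common kernel $Q \leq \A_{\Q}(K)$.

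The next step is to show $P_j \subseteq Q$ and $\gr(Q/P_j) \leq d$. For connected sum with a 2-knot $S$, I would run Mayer-Vietoris with $\Q[t^{\pm 1}]$-coefficients on the decomposition $X_{F\#S} = X_F \cup_{S^1\times D^2} X_S$: using that the infinite cyclic cover of $S^1 \times D^2$ is contractible (so $H_1(S^1 \times D^2;\Q[t^{\pm 1}]) = 0$) together with Proposition~\ref{prop:2spheresum}, one obtains $\A_{\Q}(F\#S) \cong \A_{\Q}(F) \oplus \A_{\Q}(S)$ and a factorization of $\A_{\Q}(K) \to \A_{\Q}(F\#S)$ through the $\A_{\Q}(F)$ summand, so the kernel is unchanged under 2-knot sum. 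For the $d$ stabilizations, I would then apply Proposition~\ref{prop:kerneloftwistedH1} with $R = \Q[t^{\pm 1}]$, $m=1$, and $\phi$ the abelianization $\pi_1(X_K) \to \Z = \GL_1(\Q[t^{\pm 1}])$; this extends over $\pi_1(X_T)$ by Construction~\ref{cons:standardcobordism}, since the attaching circle of the 5-dimensional 2-handle is null-homologous. The proposition then yields a generating set for $Q$ consisting of $P_j$ together with at most $d$ further elements, so $\gr(Q/P_j) \leq d$.

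The conclusion is a short module-theoretic calculation. The image of $P_2$ under the quotient $Q \twoheadrightarrow Q/P_1$ is, by the second isomorphism theorem, isomorphic to $P_2/(P_1 \cap P_2)$. Lemma~\ref{lemma:gen-rank-facts}~(\ref{item:gen-rank-subgroup}) gives
\[\gr(P_2/(P_1 \cap P_2)) \leq \gr(Q/P_1) \leq d,\]
while Lemma~\ref{lemma:gen-rank-facts}~(\ref{item:gen-rank-ses}) applied to $0 \to P_1 \cap P_2 \to P_2 \to P_2/(P_1 \cap P_2) \to 0$ gives
\[\gr(P_2/(P_1 \cap P_2)) \geq \gr(P_2) - \gr(P_1 \cap P_2) = n - k,\]
so $d \geq n-k$, as desired. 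The main technical subtlety I anticipate is the clean identification of the 2-knot step at the level of Alexander modules rather than merely of fundamental groups; once that is in place, the argument is a direct assembly of the cobordism results of Section~\ref{section:cobordisms} and the generating-rank lemmas of Section~\ref{section:gen-rank}.
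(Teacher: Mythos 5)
Your proposal is correct and follows essentially the same route as the paper: both use Proposition~\ref{prop:2spheresfirst} to reorder moves, Proposition~\ref{prop:kerneloftwistedH1} (with $m=1$ and $R=\Q[t^{\pm1}]$) to get $P_j\subseteq Q$ plus a $d$-element generating set for $Q$ relative to $P_j$, and Lemma~\ref{lemma:gen-rank-facts}~(\ref{item:gen-rank-subgroup}),~(\ref{item:gen-rank-ses}) for the final count. The only differences are cosmetic: the paper bounds $\gr(Q)$ from below by $2n-k$ via the map $P_1\oplus P_2\to Q$ and from above by $n+d$, whereas you bound $\gr(Q/P_1)$ between $n-k$ and $d$ directly; and for the 2-knot step you spell out the Mayer--Vietoris argument on Alexander modules rather than citing Proposition~\ref{prop:2spheresum} (which is stated at the level of $\pi_1$), which is a harmless and arguably more self-contained substitution.
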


\begin{proof}
Suppose that $F$ is a genus $g$ surface to which both $\Delta_1$ and $\Delta_2$ stabilize by $g$ 1-handle additions and some number of 2-knot additions.
 We will show that $g \geq n-k$.
 By Proposition~\ref{prop:2spheresfirst}, for $j=1,2$ there exist a disc $\Delta_j'$ obtained from $\Delta_j$ by connected sum with some number of knotted 2-spheres such that $F$ is obtained from $\Delta_j'$ by $g$ 1-handle additions.
It follows from Proposition~\ref{prop:2spheresum} that for $j=1,2$ we have
 \[P_j' :=\ker(\A_{\Q}(K) \to \A_{\Q}(\Delta_i'))=P_j.
 \]

Let $P:= \ker(\A_{\Q}(K) \to \A_{\Q}(F))$.
 By Proposition~\ref{prop:kerneloftwistedH1}, we see that both $P_1'$ and $P_2'$ are submodules of $P$. We now argue  that the generating rank of $P$, considered as a $\Q[t^{\pm1}]$-module,  is at least $2n-k$. To see this we show that $\Imm(P_1'\oplus P_2' \to P)$ has generating rank at least $2n-k$ and apply Lemma~\ref{lemma:gen-rank-facts}~(\ref{item:gen-rank-subgroup}).
 Let $i_1 \colon P_1' \to P$ and $i_2 \colon P_2' \to P$ be the inclusion maps.  Both $P_1'$ and $P_2'$ are submodules of $P$, so
 \[\ker(i_1 \oplus -i_2 \colon P_1' \oplus P_2' \to P) = \{(p_1,p_2) \in P_1' \oplus P_2' \mid i_1(p_1) = i_2(p_2) \in P\} \cong P_1' \cap P_2'.\]
 We obtain a short exact sequence
 \[0 \to P_1' \cap P_2' \to P_1' \oplus P_2' \to \Imm(i_1 \oplus -i_2) \to 0,\]
and conclude by  Lemma~\ref{lemma:gen-rank-facts}~(\ref{item:gen-rank-ses}) that $\gr(\Imm(i_1 \oplus -i_2)) \geq 2n-k$. Therefore by Lemma~\ref{lemma:gen-rank-facts}~\eqref{item:gen-rank-subgroup}, $\gr(P) \geq 2n-k$.  Note that this uses that $\Q[t^{\pm1}]$ is a PID.

 However, Proposition~\ref{prop:kerneloftwistedH1} applied with $m=1$ also tells us that there exist some $x_1, \dots, x_g$ in $P$ such that $P$ is generated by $P_1' \cup \{x_1, \dots, x_g\}$. Therefore  the generating rank  of $P$ is at most $n+g$, and so we have $n + g \geq \gr(P) \geq 2n-k$, from which it follows as desired that~$g \geq n-k$.
%
%
\end{proof}

The next proposition completes the proof of Theorem~\ref{thm:B}.

\begin{prop} Let $K_0$ be the knot $9_{46}$ and let $K=\#_{i=1}^n K_0$. Let $\Delta_1= \natural_{i=1}^n D_1$ and let $\Delta_2:=\natural_{i=1}^n D_2$ be the  `left band only' and `right band only' slice discs. Then
\[ d_2(\Delta_1, \Delta_2)=n.
\]
\end{prop}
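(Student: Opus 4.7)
The plan is to establish matching upper and lower bounds $d_2(\Delta_1,\Delta_2) \leq n$ and $d_2(\Delta_1,\Delta_2) \geq n$. The lower bound will come from applying Proposition~\ref{prop:showing-d_2-large} once we compute the relevant Alexander-module kernels. The upper bound will follow by exhibiting, for each summand, an explicit single stabilization that reconciles the two band choices.

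For the lower bound, I would first record how Alexander modules and their inclusion-induced kernels behave under connected sum of knots and boundary connected sum of slice discs. A Mayer-Vietoris argument applied to the splittings $X_{\#_i K_i} = \bigcup_i X_{K_i}$ glued along thickened meridians, and $X_{\natural_i D_i} = \bigcup_i X_{D_i}$ glued along thickened meridional discs, shows that $\A_\Q$ is additive and that the inclusion induced map is the direct sum of the individual maps. Combining this with Example~\ref{exl:946}, we obtain
\[ \A_\Q(K) \cong \bigoplus_{i=1}^n \bigl(\Q[t^{\pm1}]/\langle 2t-1\rangle \oplus \Q[t^{\pm 1}]/\langle t-2\rangle\bigr), \]
and the kernels
\[ P_1 = \ker\bigl(\A_\Q(K) \to \A_\Q(\Delta_1)\bigr) \cong \bigoplus_{i=1}^n \Q[t^{\pm1}]/\langle t-2\rangle, \qquad P_2 \cong \bigoplus_{i=1}^n \Q[t^{\pm1}]/\langle 2t-1\rangle \]
sit in $\A_\Q(K)$ as complementary sets of summands. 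Hence $\gr(P_1) = \gr(P_2) = n$ and $P_1 \cap P_2 = 0$, so $\gr(P_1 \cap P_2) = 0$. Proposition~\ref{prop:showing-d_2-large} then gives $d_2(\Delta_1,\Delta_2) \geq n - 0 = n$.

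For the upper bound I would reuse the concrete surface manipulation from the proof of Proposition~\ref{prop:largedistancetotrivial}. That argument shows that the result of stabilizing $D_2$ by tubing together its two cap discs is isotopic to the pushed-in Seifert surface $\Sigma$ for $9_{46}$, which is an unknotted torus when viewed as $D_2 \cup \Sigma \subset S^4$ — but more importantly, inside $D^4$ it is simply the Seifert surface of $9_{46}$ pushed into the interior. The same manipulation applies verbatim to $D_1$: one stabilization of $D_1$ also yields the pushed-in Seifert surface $\Sigma$, because the stabilized surface is obtained by capping off a $2$-component unlink by an annulus rather than by two discs, and this operation is independent of the band chosen. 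Thus $d_2(D_1,D_2) \leq 1$. Boundary connected summing these isotopies across the $n$ summands of $K$ shows that $\Delta_1$ and $\Delta_2$ both become isotopic to the same pushed-in boundary connected sum of Seifert surfaces after exactly $n$ stabilizations, giving $d_2(\Delta_1,\Delta_2) \leq n$.

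The main obstacle is verifying cleanly that boundary connected sum of slice discs induces a direct sum decomposition of Alexander modules that is compatible with the inclusion induced maps from $\A_\Q$ of the knot; this is essentially a Mayer--Vietoris computation but requires care identifying the gluing meridian and checking that it does not contribute new relations in $\Q[t^{\pm 1}]$-coefficients. Once this functoriality statement is in place, both bounds follow from ingredients already developed in the paper, and combining them yields $d_2(\Delta_1,\Delta_2) = n$.
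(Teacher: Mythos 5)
Your proposal is correct and follows essentially the same two-pronged argument as the paper: the upper bound comes from observing that both $\Delta_1$ and $\Delta_2$ are obtained by surgering the same genus-$n$ Seifert surface for $K$, so each recovers the pushed-in Seifert surface after $n$ tubing stabilizations; the lower bound comes from the direct-sum decomposition of $\A_\Q(K)$ and the computation $\gr(P_1)=\gr(P_2)=n$, $P_1\cap P_2=0$, fed into Proposition~\ref{prop:showing-d_2-large}. The additivity of Alexander modules and their inclusion-induced kernels under (boundary) connected sum that you flag as the remaining obstacle is indeed the routine Mayer--Vietoris argument you sketch, and the paper likewise treats it as standard.
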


\begin{proof}
First, note that  we can obtain both $\Delta_1$ and $\Delta_2$ from surgery on a genus $n$ Seifert surface for $K$ and so $d_2(\Delta_1, \Delta_2) \leq n$.

There is an identification
\[\A_{\Q}(K)  \cong \bigoplus_{i=1}^n \A_{\Q}(K_0) \cong \bigoplus_{i=1}^n \left( \Q[t^{\pm1}]/ \langle 2t-1 \rangle \oplus  \Q[t^{\pm1}]/ \langle t-2 \rangle   \right)
\]
 such that
\begin{align*}
P_1&:=\ker(\A_{\Q}(K) \to \A_{\Q}(\Delta_1)) = \bigoplus_{i=1}^n \Q[t^{\pm1}]/ \langle t-2 \rangle   \\
\text{ and } P_2&:=\ker(\A_{\Q}(K) \to \A_{\Q}(\Delta_2)) = \bigoplus_{i=1}^n \Q[t^{\pm1}]/ \langle 2t-1 \rangle .\\
\end{align*}
In particular, $P_1 \cap P_2 =\{0\}$.
Now, $\gr (P_1) = \gr (P_2) = n$, and $\gr(P_1 \cap P_2) =0$. It follows from Proposition~\ref{prop:showing-d_2-large} that $d_2(\Delta_1, \Delta_2)\geq n$ as required.
\end{proof}

\section{Secondary lower bounds using metabelian twisted homology}\label{section:slice-discs-2}

We now construct subtler examples of pairs of slice discs with high stabilization distance.

%

\subsection{Satellite knots and satellite slice discs}

Our examples come from the satellite construction.
Let $R$ and $J$ be knots and let $\eta \subset S^3 \ssm R$ be an unknotted simple closed curve in the complement of $R$.
Recall that $S^3 \ssm \nu(\eta) \cup X_J \cong S^3$, where the meridian of $\eta$ is identified with the longitude of $J$, and vice versa. The image of $R \subset S^3 \ssm \nu (\eta)$ under this homeomorphism is by definition the satellite knot $R_{\eta}(J)$.

It is a well known fact that if $R$ and $J$ are slice knots and $\eta$ is any unknot in the complement of $R$, then the satellite knot $R_{\eta}(J)$ is also slice. It will be useful to have an explicit construction of a slice disc $\Delta_D$ for $R_{\eta}(J)$ coming from a choice of slice discs $\Delta_0$ for $R$ and $D$ for $J$, together with compatible degree 1 maps $f \colon X_{R_\eta(J)} \to X_R$ and $g \colon X_{\Delta_D} \to X_{\Delta_0}$.

\begin{cons}[Satellite slice discs and degree 1 maps]\label{cons:satellitedisc}
Let $R$ be a knot with slice disc  $\Delta_0$ and let $\eta$ be an unknotted curve in $S^3 \ssm \nu(R)$.
Identify $D^4\supset \Delta_0$ as $D^2 \times D^2$ in such a way that when we consider $\partial(D^2 \times D^2) = (S^1 \times D^2) \cup (D^2 \times S^1)$ we have $D^2 \times S^1= \nu(\eta)$ and so $R=\partial \Delta_0 \subseteq S^1 \times D^2$.

Now let $J$ be a knot with slice disc $D$. We obtain a slice disc denoted $\Delta_D$ for $R_{\eta}(J)$ by considering
\[ \Delta_0\subseteq  D \times D^2= \nu(D) \subset D^4.\]

Note that $X_{\Delta_D}= X_{\Delta_0} \cup_{S^1 \times D^2} X_D$, where $S^1 \times D^2$ is identified with $\nu(\eta) \subseteq X_R \subset \partial X_{\Delta_0}$ and with $S^1 \times D \subset \partial X_D$, and that this identification is evidently compatible with the decomposition $X_{R_{\eta}(J)}= (X_{R} \ssm \nu(\eta)) \cup_{T^2} X_J.$

For every knot $J$ there is a standard degree 1 map $f_0 \colon X_J \to X_U$ which sends $\mu_J$ to $\mu_U$ and $\lambda_J$ to $\lambda_U$,  and for any slice disc $D$ there is a similar degree one map $g_0 \colon X_D \to X_E$, where $E$ denotes the standard slice disc for the unknot.
For the sake of completeness, we give this construction, emphasizing that one can choose $g_0$ to be an extension of $f_0$.

Parametrize
\[ \nu\left( \partial X_J\right)= \partial X_J \times [0,\delta] =\{ (p,s,t)\in S^1 \times \left( [0,2\pi]/\sim \right) \times [0,\delta]\},
\]
where $\{(p,0,0)\}= \lambda_J$ and $\{(1, s, 0)\}= \mu_J$.
Now let $F \subset X_J$ be a (truncated) Seifert surface for $J$ with tubular neighborhood $\nu(F)= F \times [0,\varepsilon]$. We can assume that
\[ \nu(F) \cap \nu(\partial X_J)=  \{ (p,s,t)\in S^1 \times [0,\varepsilon] \times [0,\delta]\},
\]
as illustrated below.
\begin{figure}[h]
\includegraphics[height=3cm]{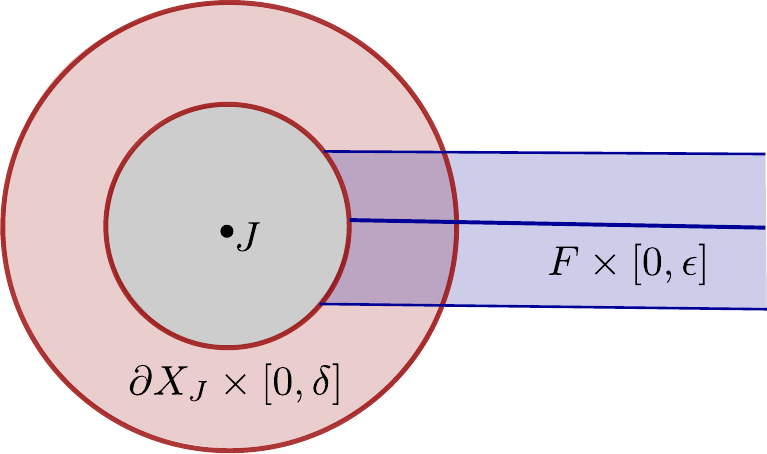}
\caption{A cross section of $X_J$ near its boundary.  Note that the grey region represents $\nu(J)$ and is therefore not part of $X_J$.}
\end{figure}

We  write $X_U= S \times D$ for $S= \left([0,\varepsilon]/ \sim\right) \cong S^1$ and $D= (S^1 \times [0,\delta])/ (S^1 \times {\delta}) \cong D^2$.
Define $f_0$ on $\nu(\partial X_J)$ by
\begin{align*}
f_0(p,s,t)= \begin{cases}
  (s,(p,t)) & \text{ if }  0 \leq s \leq \varepsilon \\
(\varepsilon,(p,t)) & \text{ if } \varepsilon<s,
\end{cases}
\end{align*}
and then extend over the rest of $\nu(F)= F \times [0,\varepsilon]$ by
$f_0(y, s)=(s,(0,\delta))$. Finally, for any $x$ in neither $\nu(F)$ nor $\nu(\partial X_K)$, we define $f_0(x)= (\varepsilon, (0, \delta))$.

The construction of $g_0$ is very similar, only with a compact orientable 3-manifold $G$ with boundary $\partial G= F \cup_J D$ playing the r{o}le
of the Seifert surface: we extend $f_0$ as defined above on $X_J$  over $X_J \times I$, then over the rest of $\nu(\partial X_D)$, then over $\nu(G) \cong G \times I$ and then send the entirety of $X_D \setminus (\nu(\partial X_D) \cup \nu(G))$ to a single point in $X_E$.

Here are the details, which closely parallel the construction of~$f_0$, though with extra care  taken to ensure that $g_0|_{X_J} = f_0$:

First parametrize a neighborhood of the slice disc $D$ as $D^2 \times D^2$, naturally a manifold with corners, such that $S^1 \times D^2$ is a tubular neighborhood of $J$ and $S^1 \times S^1 = \partial X_J$. Consider a collar on this part of $\partial X_D$ as follows. We think of $X_D$ as a manifold with corners, with $\partial X_J$ the corner set, dividing $\partial X_D$ as $X_J \cup_{\partial X_J} D^2 \times S^1$. Then we consider a collar on the $D^2 \times S^1$ part of the boundary that restricts on $X_J$ to a collar for $\partial X_J$ in $X_J$.
Parametrize this collar as
\[ \nu\left( D^2 \times S^1 \right)= D^2 \times S^1 \times [0,\delta] =\{ (p,s,t)\in D^2 \times \left( [0,2\pi]/\sim \right) \times [0,\delta]\},
\]
where $\{(p,0,0)\}$ is a push-off of the slice disc with boundary $\lambda_J$  and $\{(1, s, 0)\}= \mu_J$.

Now let $G \subset X_D$ be a (truncated) 3-manifold with $\partial G = F \cup \{(p,0,0)\}$, with tubular neighborhood $\nu(G)= G \times [0,\varepsilon]$.
We note that the existence of such a 3-manifold follows from a standard obstruction theoretic argument, see e.g.~\cite[Lemma~8.14]{Lickorish-text}.
 We can assume this restricts to the tubular neighborhood of $F$ used above in the definition of $f_0$, and that
\[ \nu(G) \cap \nu(D^2 \times S^1)=  \{ (p,s,t)\in D^2 \times [0,\varepsilon] \times [0,\delta]\}.
\]

We  write $X_E= S \times B$ for $S= \left([0,\varepsilon]/ \sim\right) \cong S^1$ and $B= (D^2 \times [0,\delta])/ (D^2 \times {\delta}) \cong D^3$.
Note that we have a natural inclusion $D \subset B$ corresponding to $X_U = S \times D \subset S \times B = X_E$.
Define $g_0$ on $\nu(D^2 \times S^1)$ by
\begin{align*}
g_0(p,s,t)=
\begin{cases}
  (s,(p,t)) & \text{ if }  0 \leq s \leq \varepsilon \\
(\varepsilon,(p,t)) & \text{ if } \varepsilon<s,
\end{cases}
\end{align*}
and then extend over the rest of $\nu(G)= G \times [0,\varepsilon]$ by
$g_0(y, s)=(s,(0,\delta))$. Finally, for any $x$ in neither $\nu(G)$ nor $\nu(D^2 \times S^1)$, we define $g_0(x)= (\varepsilon, (0, \delta))$.

By using the above decompositions $X_{R_{\eta}(J)}= (X_{R} \ssm \nu(\eta)) \cup_{T^2} X_J$
 and $X_{\Delta_D}= X_{\Delta_0} \cup_{S^1 \times D^2} X_D$, we obtain compatible degree 1 maps
\begin{align*}
f= \Id \cup f_0 \colon  X_{R_{\eta}(J)} \to X_R \text{ and }  g=\Id \cup g_0 \colon X_{\Delta_D} \to X_{\Delta_0}.
\end{align*}
This completes Construction~\ref{cons:satellitedisc}.
\end{cons}

Recall that for a connected space $X$ equipped with a surjective map $\varepsilon \colon \pi_1(X) \to \Z$, we let $\A(X)$ denote the induced $\Z[t^{\pm1}]$-twisted first homology, and for a knot or disc $L $ we often let $\mathcal{A}(L)$ denote $\mathcal{A}(X_L)$.

\begin{prop}\label{prop:homologycoverinfection}
Let $R$, $\Delta_0$, $\eta$, $J$, and $D$ be as above. Suppose that the linking number of $\eta$ and $R$ in $S^3$ is 0.
Letting $f$ and $g$ be the degree 1 maps discussed above, the following diagram commutes, where the horizontal maps are the usual inclusion induced maps:
\[
\begin{tikzcd}
&\A(R_{\eta}(J)) \arrow{d}{f_*} \arrow{r} &\A(\Delta_D) \arrow{d}{g_*}& \\
&\A(R) \arrow{r} &\A(\Delta_0).&
 \end{tikzcd}
\]
Moreover, $f_*$ and $g_*$ are isomorphisms and so
\[\ker(\A(R_{\eta}(J)) \to  \A(\Delta_D))= f_*^{-1}( \ker(\A(R) \to \A(\Delta_0))) \cong \ker(\A(R) \to \A(\Delta_0))\]
is independent of the choice of slice disc $D$ for $J$.
\end{prop}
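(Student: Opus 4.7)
My plan is to verify commutativity of the square, prove that the vertical maps are isomorphisms by a Mayer--Vietoris and five lemma argument, and then extract the kernel statement by a diagram chase. The hypothesis $\mathrm{lk}(\eta,R)=0$ is crucial throughout: it forces $[\lambda_\eta]=0 \in H_1(X_R)$, hence $[\mu_J]=[\lambda_\eta]=0$ in $H_1(X_{R_\eta(J)})$ and in $H_1(X_{\Delta_D})$, which in turn ensures that the abelianizations $\varepsilon$ are compatible with $f_*$ and $g_*$ so that these maps lift to the infinite cyclic covers. Given this, since $g_0|_{X_J}=f_0$ by the construction of $g_0$ in Construction~\ref{cons:satellitedisc}, we have $g|_{X_{R_\eta(J)}}=f$ as maps of spaces, and commutativity of the displayed diagram follows by naturality of inclusion-induced maps on twisted $H_1$.

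To show $f_*$ is an isomorphism, I would apply the Mayer--Vietoris sequences with $\Z[t^{\pm 1}]$-coefficients to the decompositions
\[X_R = (X_R \ssm \nu\eta) \cup_{T^2} \nu(\eta), \qquad X_{R_\eta(J)} = (X_R \ssm \nu\eta) \cup_{T^2} X_J.\]
The vanishing of $\varepsilon$ on $\pi_1(X_J)$, $\pi_1(\nu\eta)$, and $\pi_1(T^2)$ means the restricted infinite cyclic cover on each of these pieces is a disjoint union $\coprod_\Z$, so the twisted homology of each piece reduces to $\Z[t^{\pm 1}] \otimes_\Z H_*(-;\Z)$. Because $f_0\colon X_J \to \nu(\eta)$ sends $\mu_J \mapsto \mu_U$, and $X_J$ and $\nu(\eta)$ both have the integral homology of $S^1$ in degrees $\le 2$, the map $f_0$ is an isomorphism on $H_*(-;\Z)$ in this range. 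Since $f$ is the identity on the other pieces, the five lemma yields that $f_*$ is an isomorphism. The argument for $g_*$ is structurally identical but one dimension higher, applied to $X_{\Delta_D} = X_{\Delta_0} \cup_{S^1 \times D^2} X_D$ together with a companion decomposition of $X_{\Delta_0}$ obtained by carving out a collar neighborhood $N$ of $\nu(\eta) \subset \partial X_{\Delta_0}$ (so $N \simeq X_E \simeq S^1$). The map $g_0\colon X_D \to X_E$ induces isomorphisms on $H_*(-;\Z)$ for $* \le 2$ since both $X_D$ and $X_E$ are homology circles in this range (the discrepancy between $H_3(X_D)=\Z$ and $H_3(X_E)=0$ is irrelevant for computing $H_1$ of the union), and a second application of the five lemma concludes.

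Finally, commutativity plus injectivity of $g_*$ gives the chain of equivalences $x \in \ker(\A(R_\eta(J)) \to \A(\Delta_D)) \iff g_*$ kills the image of $x \iff f_*(x) \in \ker(\A(R) \to \A(\Delta_0))$, establishing the displayed equality; the abstract isomorphism then follows from $f_*$ being a bijection, and independence of the choice of $D$ is immediate since the right-hand side involves only $R$, $\Delta_0$, $\eta$, and $J$. The main obstacle will be the four-dimensional Mayer--Vietoris argument for $g_*$: correctly identifying the collar $N$ with a model for $X_E$ and tracking which boundary pieces lie in $\partial X_{\Delta_0}$ versus the interior requires bookkeeping, but the underlying idea mirrors the three-dimensional case.
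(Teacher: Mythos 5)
Your proposal follows essentially the same route as the paper: commutativity from the compatibility $g|_{X_{R_\eta(J)}} = f$ built into Construction~\ref{cons:satellitedisc}, Mayer--Vietoris with $\Z[t^{\pm1}]$-coefficients comparing the two decompositions (using $\lk(\eta,R)=0$ to trivialize the restrictions of $\varepsilon$ on the glued-in pieces) to show $f_*$ and $g_*$ are isomorphisms, and a diagram chase for the kernel identity. The only blemish is the parenthetical claim that $H_3(X_D)=\Z$ --- in fact a slice disc exterior $X_D$ in $D^4$ is a homology circle so $H_3(X_D)=0$ --- but as you note this degree plays no role in the degree-one Mayer--Vietoris comparison, so the argument stands.
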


\begin{proof}
The fact that the diagram commutes follows immediately from the compatibility of $f$ and $g$ as defined in Construction~\ref{cons:satellitedisc}. Since the linking number of $R$ and $\eta$ is 0, the fact that
$f_*$ is an isomorphism is a standard fact (one can also imitate the proof of Proposition~\ref{prop:satellitekernel} in a simpler setting). Briefly, one compares the Mayer-Vietoris sequences for $X_{R_{\eta}(J)} = X_{R\cup \eta} \cup_{S^1 \times S^1} X_J$ and $X_R = X_{R_{\eta}(U)} = X_{R\cup \eta} \cup_{S^1 \times S^1} X_U$. The fact that the winding number of $\eta$ is zero implies that the induced representations $\pi_1(X_J) \to \Z$ and $\pi_1(X_U) \to \Z$ are trivial, so  $H_1(X_J;\Z[t^{\pm 1}])\cong H_1(X_U;\Z[t^{\pm 1}]) \cong \Z[t^{\pm 1}]$.

To see that $g_*$ induces an isomorphism consider the following diagram, where the rows are the Mayer-Vietoris sequences in $\Z[t^{\pm 1}]$-coefficients corresponding to the decompositions $X_{\Delta_D}= X_{\Delta_0} \cup_{S^1 \times D^2} X_D$ and $X_{\Delta_0}= X_{\Delta_0} \cup_{S^1 \times D^2} X_E$. We have replaced the $H_0$ terms with zeroes, since the maps from $H_0(S^1 \times D^2;\Z[t^{\pm 1}])$ are injective.
\[
\adjustbox{scale=0.9}{%
\begin{tikzcd}
  H_1(S^1 \times D^2;\Z[t^{\pm 1}]) \arrow{d} \arrow{r} & H_1(X_{\Delta_0};\Z[t^{\pm 1}]) \oplus H_1(X_D;\Z[t^{\pm 1}]) \arrow{d}{\operatorname{Id} \oplus (g_0)_*} \arrow{r} & H_1(X_{\Delta_D};\Z[t^{\pm 1}]) \arrow{r} \arrow{d}{g_*} & 0  \\
  H_1(S^1 \times D^2;\Z[t^{\pm 1}])  \arrow{r} & H_1(X_{\Delta_0};\Z[t^{\pm 1}]) \oplus H_1(X_E;\Z[t^{\pm 1}])  \arrow{r} & H_1(X_{\Delta_0};\Z[t^{\pm 1}]) \arrow{r} & 0
\end{tikzcd}}\]
Since the linking number of $\eta$ and $R$ is 0, the cores of the copies of $S^1 \times D^2$ along which the spaces are glued, when  thought of as fundamental group elements, map trivially to $\Z$ via the appropriate version of $\varepsilon$. Therefore $H_1(S^1 \times D^2;\Z[t^{\pm 1}]) \cong H_1(S^1 \times D^2;\Z) \otimes \Z[t^{\pm 1}] \cong \Z[t^{\pm 1}].$  Similarly, since $S^1 \times D^2\to X_D$ and $S^1 \times D^2 \to X_E$ are $\Z$-homology equivalences, the maps  $\pi_1(X_D) \to \Z$ and $\pi_1(X_E) \to \Z$ are likewise trivial, and so the maps $H_1(S^1 \times D^2;\Z[t^{\pm 1}]) \to H_1(X_D;\Z[t^{\pm 1}])$ and $H_1(S^1 \times D^2;\Z[t^{\pm 1}]) \to H_1(X_E;\Z[t^{\pm 1}])$ are isomorphisms.  It follows that the diagram above reduces to the diagram:
\[\begin{tikzcd}
& H_1(X_{\Delta_0};\Z[t^{\pm 1}]) \arrow{d}{\operatorname{Id}} \arrow{r}{\cong} & H_1(X_{\Delta_D};\Z[t^{\pm 1}]) = \A(\Delta_D) \arrow{d}{g_*} & \\
  & H_1(X_{\Delta_0};\Z[t^{\pm 1}]) \arrow{r}{\cong} & H_1(X_{\Delta_0};\Z[t^{\pm 1}])= \A(\Delta_0). &
\end{tikzcd}\]
Therefore the right hand vertical map is an isomorphism induced by $g$, as required.  
\end{proof}

\begin{exl}\label{exl:61}
Let $R$ be the slice knot $6_1$, with unknotted curve $\eta \in S^3 \ssm \nu(R)$ as shown on the left of Figure~\ref{fig:61}.
We will be interested in the satellite knot $R_{\eta}(J)$, depicted on the right of Figure~\ref{fig:61}, for certain choices of $J$.
\begin{figure}[ht]
  \includegraphics[height=5cm]{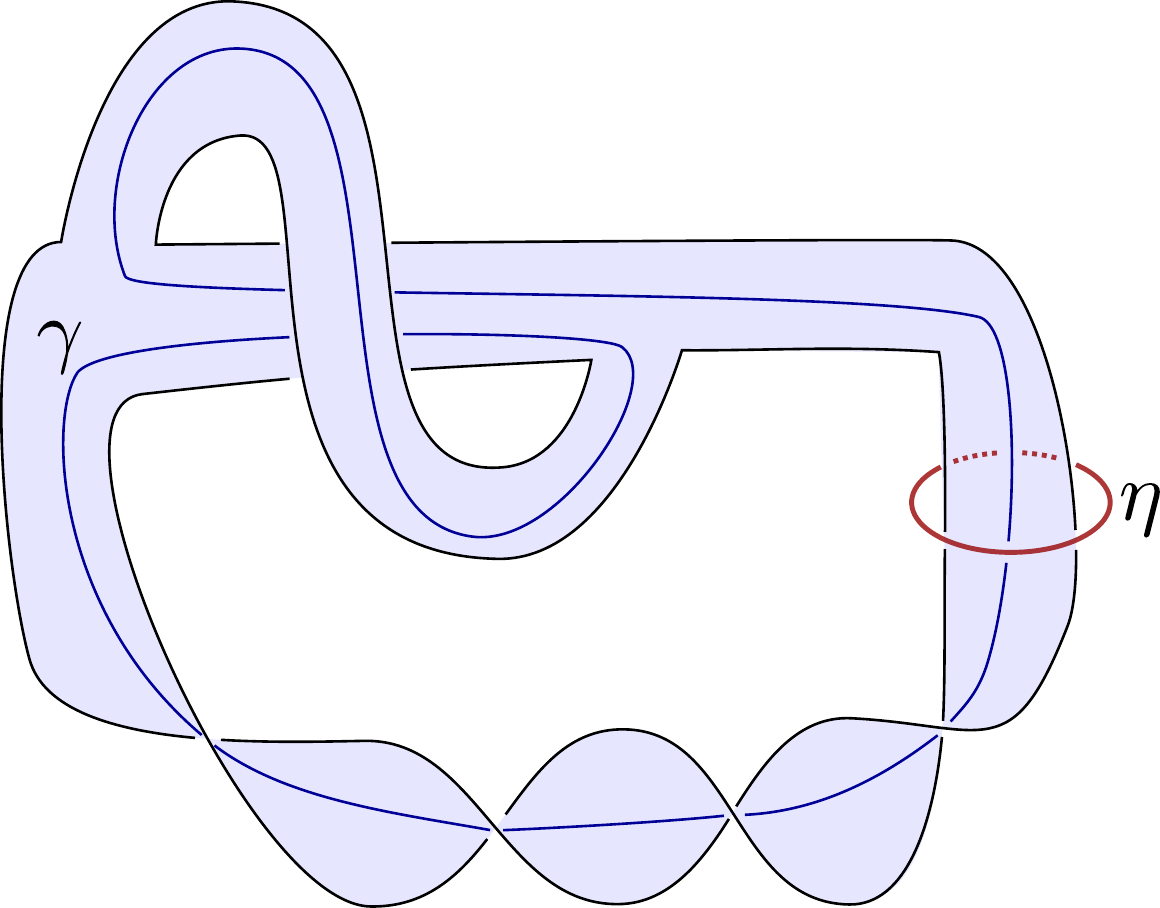}
  \qquad \qquad
    \includegraphics[height=5cm]{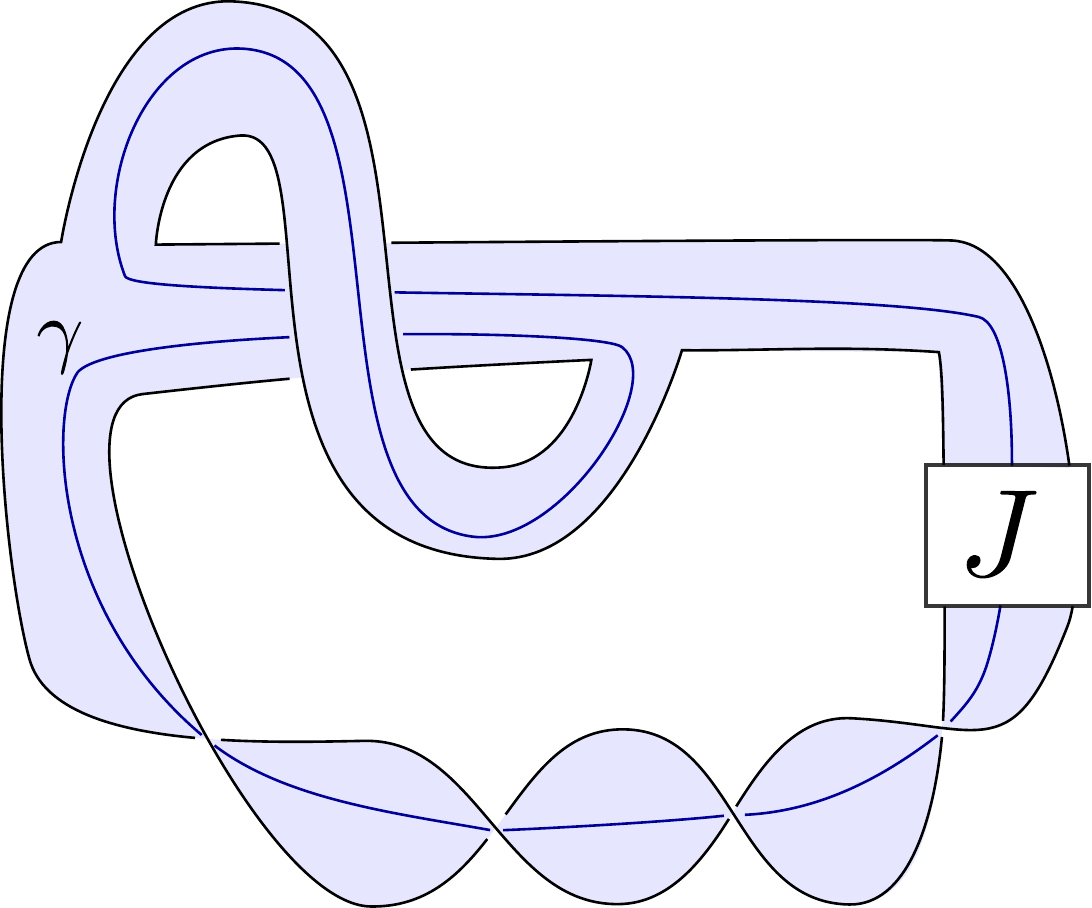}
  \caption{The knot $R=6_1$ with a genus 1 Seifert surface $F$, a $0$-framed curve $\gamma$ on $F$, and an infection curve $\eta$ (left) and the satellite knot $R_{\eta}(J)$ (right).}
  \label{fig:61}
\end{figure}
Note that $\eta$ does not intersect  $F$ and so~$R_{\eta}(J)$ has a genus 1 Seifert surface $F_J$  as shown on the right of Figure~\ref{fig:61}.
The illustrated homologically essential 0-framed curve on $F_J$ (that, in a mild abuse of notation, we also call  $\gamma$) is  isotopic to the knot $J$ when thought as a curve in $S^3$.

Let $\Delta_0$ denote the standard slice disc for $R$, obtained by surgering $F$ along $\gamma$. Given a slice disc $D$ for $J$, in Construction~\ref{cons:satellitedisc}  we built a slice disc $\Delta_D$  for $R_{\eta}(J)$.  In this context, one can interpret this construction as follows.  Push the interior of $F_J$ into the interior of $D^4$, then remove a small neighborhood of $\gamma$ in $F_J$. This creates two new boundary components, which may be capped off with parallel copies of $D$ to yield  $\Delta_D$.  We note that a single 1-handle attachment to $\Delta_D$ that connects the two parallel copies of $D$ returns the (pushed in) Seifert surface $F_J$, and so if $D$ and $D'$ are two different slice discs for $J$ we always have that $d_2(\Delta_D, \Delta_{D'}) \leq 1$, even if  $d_2(D, D')$ is large.

As in Example~\ref{exl:946}, we can pick a basis for the first homology of the Seifert surface $F$ for which the Seifert matrix is given by
\[A=\left[ \begin{array}{cc} 1 & 1 \\ 0 & -2 \end{array}\right]\] and manipulate $tA-A^T$ to see that $\A(R) \cong \Z[t^{\pm1}]/ \langle (2t-1)(t-2) \rangle$.  We have that $\A(\Delta_0) \cong \Z[t^{\pm1}]/ \langle 2t-1 \rangle$, and that the kernel of the inclusion induced map $\A(R) \to \A(\Delta_0)$ is exactly $(t-2) \A(R)$. Details can be found in e.g.\ \cite[Section~5.2]{Conway-Powell}.
 Additionally, by substituting $t=-1$ into the above computations we discover the homology of the 2-fold branched covers: $H_1(\Sigma_2(R)) \cong \Z_9$ and $\ker(H_1(\Sigma_2(R);\Z) \to H_1(\Sigma_2(D^4, \Delta_0);\Z))= 3 \Z_9$.
\end{exl}

\subsection{Metabelian twisted homology}

We will use twisted homology coming from metabelian representations that factor through the dihedral group $D_{2n} \cong \Z_2 \ltimes \Z_n$. As noted in the introduction, these representations originate in the work of Casson-Gordon~\cite{Casson-Gordon:1978-1,Casson-Gordon:1986-1}. Our perspective on these representations is particularly indebted to the work of \cite{HKL08}, as well as \cite{Kirk-Livingston:1999-2, Let00, Friedl:2003-4}.

\begin{cons}
Consider a knot $K$ with preferred meridian $\mu_0$, an abelianization map $\varepsilon \colon \pi_1(X_K) \to \Z$, and  a map $\psi\colon H_1(X_K^2) \to \Z_n$ for some prime $n$, where $X^2_K$ is the 2-fold cyclic cover of~$X_K$.  Assume that the map $\psi$ factors as \[\psi \colon H_1(X_K^2) \to H_1(\Sg_2(K)) \xrightarrow{\chi} \Z_n,\] where the first map is induced by the inclusion $X_K^2 \subset \Sg_2(K)$, so that $\psi$ is determined by~$\chi$.  Define
\begin{align*}
\phi_\chi \colon \pi_1(X_K) &\to \Z_2 \ltimes \Z_n \text{ by }
\phi_\chi(\gamma)=([\varepsilon(\gamma)], \psi(\mu_0^{-\varepsilon(\gamma)}\gamma)),
\end{align*}
noting that $\mu_0^{-\varepsilon(\gamma)}\gamma \in \ker (\pi_1(X_K) \to \Z_2)$ and so represents an element in $\pi_1(X^2_K)$.
Letting  $\xi_n= e^{2 \pi i/n}$, we have a standard map
\begin{align*}
 \alpha \colon \Z_2 & \ltimes \Z_n \to \GL_2(\Z[\xi_n])\\
(a, b) &\mapsto
\left[ \begin{array}{cc}
0 & 1 \\
1 & 0
\end{array} \right]^{a}
\left[ \begin{array}{cc}
\xi_n^{b} & 0 \\
0 & \xi_n^{-b}
\end{array} \right].
\end{align*}

In particular, we obtain a representation $\alpha_\chi= \alpha \circ \phi_\chi$ of $\pi_1(X_K)$ into $\GL_2(\Z[\xi_n])$. We will be interested in the corresponding twisted homology $H_*^{\alpha_\chi}(X_K, \Z[\xi_n])$, especially when $\Z[\xi_n]$ is a PID, e.g.\ when $n=3$  and $\Z[\xi_3]$ is the ring of \emph{Eisenstein integers}.
For a connected space $X$ together with a map $\phi \colon \pi_1(X) \to \Z_2 \ltimes \Z_n$, we will sometimes let $H_*^{\phi}(X;\Z[\xi_n])$ be shorthand for $H_*^{\alpha \circ \phi}(X; \Z[\xi_n])$. When the coefficients are clearly understood and we are short of space, we shall abbreviate this still further to $H_*^{\phi}(X)$.
\end{cons}

\begin{remark}\label{rem:h0}
We will often have two compact connected spaces $X \subset Y$ and a map  $\alpha_\psi= \alpha \circ \phi_\psi \colon \pi_1(Y) \to \GL_2(\Z[\xi_n])$ arising as above
from $\varepsilon \colon \pi_1(Y) \to \Z$ and $\psi  \colon Y^2 \to \Z_n$.
We wish to consider the inclusion induced maps
\[i_k \colon H_k^{\alpha_{\psi} \circ i_*} (X, \Z[\xi_n]) \to H_k^{\alpha_{\psi}} (Y, \Z[\xi_n]).\]
To understand this map when $k=0$, pick a CW structure on $X$ with a single 0-cell $x$ and 1-cells $g_1, \dots, g_m$ and extend it to a CW structure on $Y$ by first adding 1-cells $g_{m+1}, \dots, g_{m+m'}$. Of course, there may be many additional $n$-cells for $n \geq 2$, but these will not impact  $H_0$ computations. The relevant twisted cellular chain complexes are
\[ C_0^{\alpha_{\psi}\circ i_*}(X) \cong C_0^{\alpha_{\psi}}(Y) \cong \Z[\xi_n]^2,\,
C_1^{\alpha_{\psi}\circ i_*}(X)\cong \Z[\xi_n]^{2m}, \text{ and }C_1^{\alpha_{\psi}}(Y)\cong \Z[\xi_n]^{2(m+m')}
\]
with differential maps  given by the matrices
\begin{align*}
d_1^X=& \left[ \begin{array}{cccc} [\alpha_{\psi}(g_1) - \Id] & [\alpha_{\psi}(g_2) - \Id] & \dots & [\alpha_{\psi}(g_m) - \Id] \end{array} \right]
\\
d_1^Y=&\left[ \begin{array}{cccccc} [\alpha_{\psi}(g_1) - \Id] & [\alpha_{\psi}(g_2) - \Id] & \dots& [\alpha_{\psi}(g_m) - \Id] & \dots & [\alpha_{\psi}(g_{m+m'}) - \Id] \end{array} \right].
\end{align*}
It follows that the map $i_0$ is always a surjection, and is an isomorphism if and only if
\[
\Span\{ \Imm [\alpha_{\psi}(g_i) - \Id]\}_{i=1}^m= \Span\{ \Imm [\alpha_{\psi}(g_i) - \Id]\}_{i=1}^{m+m'}.
\]
In order to ensure that $i_0$ is an isomorphism, it therefore suffices to check that
the two maps $\phi_\psi \circ i_*$ and $\phi_\psi$ have the same image in $\Z_2 \ltimes \Z_n$. In the rest of this section, whenever we claim that $i_0$ is an isomorphism it will be because these two images agree, though in the interest of brevity we will often leave that verification to the reader.
\end{remark}

We will need a computation of the twisted homology of a knot complement with respect to certain abelian representations into $\GL_2(\Z[\xi_n])$.
It will be convenient to have the following notation.

\begin{notation}\label{notation:Axi}
Let $X$ be a connected space equipped with a surjection $\varepsilon \colon \pi_1(X) \twoheadrightarrow \Z$, and let $\xi$ be a root of unity.
Define
$\A_{\xi}(X):=\mathcal{A}(X) \otimes_{\Z[t^{\pm1}]} \Z[\xi],$
where $\Z[\xi]$ has the $\Z[t^{\pm1}]$-module structure induced by $t \cdot a:= \xi a$.

Also,  for any $\Z[\xi]$-module $M$, let $\widebar{M}$ denote the module with conjugate $\Z[\xi]$-structure
and let  $M^{1 \oplus \bar{1}}:= M \oplus \widebar{M}$.
\end{notation}

\begin{lem}\label{lem:computation1}
Let $X$ be a connected space with a surjection   $\varepsilon\colon \pi_1(X) \twoheadrightarrow \Z$, and define
$\phi \colon \pi_1(X)  \to \GL_2(\Z[\xi_n])$ by
\begin{align*}
\gamma &\mapsto  \left[ \begin{array}{cc}
 \xi_n^{\varepsilon(\gamma)} & 0 \\ 0 & \xi_n^{-\varepsilon(\gamma)}
  \end{array} \right].
  \end{align*}
Then
$H_1^{\phi}(X; \Z[\xi_n]) \cong \A_{\xi}(X) \oplus \A_{\bar{\xi}}(X) \cong \A_{\xi}(X)^{1 \oplus \bar{1}}.$
\end{lem}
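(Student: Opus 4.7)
The plan is to exploit that the representation $\phi$ is diagonal, hence reducible. The module $\Z[\xi_n]^2$ with its $\phi$-action splits as a direct sum $V_+\oplus V_-$ of $\Z[\pi_1(X)]$-submodules, where $V_\pm$ denotes the rank-one $\Z[\xi_n]$-module on which $\gamma\in\pi_1(X)$ acts by multiplication by $\xi_n^{\pm\varepsilon(\gamma)}$. Applying $C_*(\wt X)\otimes_{\Z[\pi_1(X)]}(-)$ splits the twisted cellular chain complex, and passing to $H_1$ yields
\[
H_1^\phi(X;\Z[\xi_n]) \;\cong\; H_1(X;V_+)\oplus H_1(X;V_-).
\]
It therefore suffices to identify $H_1(X;V_+)$ with $\A_\xi(X)$; the identification $H_1(X;V_-)\cong\A_{\bar\xi}(X)$ will then follow verbatim after replacing $\xi_n$ by $\xi_n^{-1}=\bar\xi_n$.

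Since the $\pi_1(X)$-action on $V_+$ factors through $\varepsilon$, I would regard $V_+$ as the $\Z[t^{\pm1}]$-module $\Z[\xi_n]\cong \Z[t^{\pm1}]/(\Phi_n(t))$, where $\Phi_n$ is the $n$-th cyclotomic polynomial. Tensoring the short exact sequence of $\Z[t^{\pm1}]$-modules
\[
0 \longrightarrow \Z[t^{\pm1}] \xrightarrow{\;\Phi_n(t)\cdot\;} \Z[t^{\pm1}] \longrightarrow \Z[t^{\pm1}]/(\Phi_n(t)) \longrightarrow 0
\]
with the free $\Z[t^{\pm1}]$-chain complex $C_*(X;\Z[t^{\pm1}])$ yields a long exact homology sequence containing
\[
\A(X) \xrightarrow{\;\Phi_n(t)\cdot\;} \A(X) \longrightarrow H_1(X;V_+) \longrightarrow H_0(X;\Z[t^{\pm1}]) \xrightarrow{\;\Phi_n(t)\cdot\;} H_0(X;\Z[t^{\pm1}]).
\]
From this I extract the short exact sequence
\[
0 \to \A(X)/\Phi_n(t)\A(X) \to H_1(X;V_+) \to \ker\!\bigl(\Phi_n(t)\cdot\colon H_0(X;\Z[t^{\pm1}])\to H_0(X;\Z[t^{\pm1}])\bigr) \to 0,
\]
whose leftmost term is exactly $\A(X)\otimes_{\Z[t^{\pm1}]}\Z[\xi_n]=\A_\xi(X)$.

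The one step requiring real care, and which I view as the main obstacle, is verifying that the rightmost kernel vanishes (equivalently, that the potential $\Tor$ obstruction from $H_0$ disappears). Since $X$ is connected and $\varepsilon$ is surjective, $H_0(X;\Z[t^{\pm1}])\cong \Z[t^{\pm1}]/(t-1)\cong\Z$, on which $\Phi_n(t)$ acts as multiplication by the integer $\Phi_n(1)$. Because $\xi_n\neq 1$, this integer is nonzero, so the kernel is trivial and hence $H_1(X;V_+)\cong\A_\xi(X)$. Combined with the analogous identification of $H_1(X;V_-)$, this yields the desired decomposition $H_1^\phi(X;\Z[\xi_n])\cong\A_\xi(X)\oplus\A_{\bar\xi}(X)=\A_\xi(X)^{1\oplus\bar 1}$.
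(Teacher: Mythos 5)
Your proof is correct, and it reaches the conclusion by a route that is cosmetically different from, but structurally parallel to, the paper's. Both arguments begin with the same reduction: since $\phi$ is diagonal, the twisted chain complex splits, and one computes $H_1$ of the one-dimensional twisted homology (what the paper calls $H_1^\theta$, your $H_1(X;V_\pm)$). Where you differ is in how you compute that group. The paper invokes the K\"unneth spectral sequence $E^2_{p,q}=\Tor_p^{\Z[t^{\pm1}]}(H_q(X^\infty),\Z[\xi_n])\Rightarrow H_{p+q}(C_*(X^\infty)\otimes_{\Z[t^{\pm1}]}\Z[\xi_n])$, shows $E^2_{2,0}=0$ (since $\Z$ admits a length-one resolution) and $E^2_{1,0}=\Tor_1(\Z,\Z[\xi_n])=0$ using the resolution $0\to\Z[t^{\pm1}]\xrightarrow{t-1}\Z[t^{\pm1}]\to\Z\to0$, and concludes that only $E^2_{0,1}=\A_\xi(X)$ survives. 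You instead tensor the free $\Z[t^{\pm1}]$-chain complex $C_*(X^\infty)$ with the length-one free resolution $0\to\Z[t^{\pm1}]\xrightarrow{\Phi_n(t)}\Z[t^{\pm1}]\to\Z[\xi_n]\to0$ of the \emph{coefficient} module and take the long exact sequence, which has only two boundary terms and hence avoids all mention of spectral sequences. The potential obstruction term you identify, $\ker(\Phi_n(1)\cdot\colon H_0(X^\infty)\to H_0(X^\infty))$, is precisely $\Tor_1^{\Z[t^{\pm1}]}(\Z[\xi_n],\Z)$ computed from the other resolution, so by balancing of $\Tor$ this is the same vanishing statement the paper proves; you just verify it via $\Phi_n(1)\neq0$ in $\Z$ rather than $\xi_n-1\neq0$ in $\Z[\xi_n]$. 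Your version is slightly more elementary and self-contained; the paper's version generalizes more readily (the authors reuse the K\"unneth spectral sequence in the proof of Proposition~\ref{prop:2knotsandtwistedh1}, where the coefficient module no longer has an obvious length-one resolution, so setting up the spectral-sequence machinery here saves them repetition later).

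One small caveat you should make explicit: the argument requires $n\geq2$, since for $n=1$ the polynomial $\Phi_1(t)=t-1$ has $\Phi_1(1)=0$ and the $H_0$ contribution would not vanish. This is harmless for the paper's applications (the representations there are always into nontrivial $\GL_2(\Z[\xi_n])$ with $\xi_n\neq1$), but the lemma as stated does implicitly assume this, and your parenthetical ``Because $\xi_n\neq1$'' is doing real work and deserves a sentence rather than an aside.
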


\begin{proof}
First, note that $H_1^{\phi}(X; \Z[\xi_n])\cong H_1^{\theta}(X; \Z[\xi_n])^{1 \oplus \bar{1}}$, where $\theta \colon \pi_1(X) \to \Z[\xi_n]^\times$ is given by $\theta(\gamma)= \xi_n^{\varepsilon(\gamma)}$.
So it suffices to show that $H_1^{\theta}(X; \Z[\xi_n]) \cong \A_{\xi}(X)$.

Let $X^{\infty} \to X$ be the $\varepsilon$-induced $\Z$-cover of $X$.
Note that $\theta(\gamma)=0$ if and only if $\varepsilon(\gamma) \equiv 0 \mod n$, and so the $\theta$-induced cover of $X$ is the $n$-fold cyclic cover $X^{n}$. We can compute
$H_1^{\theta}(X; \Z[\xi_n])$ as
\begin{align*}
H_1^{\theta}(X; \Z[\xi_n]) &= H_1\left( C_*(X^n) \otimes_{\Z[\Z_n]} \Z[\xi_n]\right) = H_1\left( C_*(X^\infty) \otimes_{\Z[t^{\pm1}]} \Z[\xi_n]\right).
\end{align*}
The K\"unneth spectral sequence~\cite[Theorem 5.6.4,~p.~143]{Weibel94} tells us that
since $C_*(X_{\infty})$ is a bounded below complex of flat (in fact free) $\Z[t^{\pm1}]$-modules, there is a boundedly converging upper right quadrant  spectral sequence:
\[E_{p,q}^2= \Tor_p^{\Z[t^{\pm1}]}(H_q(X^{\infty}), \Z[\xi_n]) \,\Rightarrow \, H_{p+q}(C_*(X^\infty) \otimes_{\Z[t^{\pm1}]} \Z[\xi_n]).
\]

The only $E^2_{p,q}$ which could potentially contribute to $H_{1}(C_*(X^\infty) \otimes_{\Z[t^{\pm1}]} \Z[\xi_n])$ are $(p,q)\in \{(1,0), (0,1)\}$. The only relevant differential could be $d^2_{2,0} \colon E^2_{2,0} \to E^2_{0,1}$. However.
\begin{align*}
E^2_{2,0}&=\Tor_2^{\Z[t^{\pm1}]}(H_0(X^{\infty}), \Z[\xi_n])= \Tor_2^{\Z[t^{\pm1}]}(\Z[t^{\pm1}]/ \langle t-1 \rangle, \Z[\xi_n]) \\ &=\Tor_2^{\Z[t^{\pm1}]}(\Z, \Z[\xi_n])=0,
\end{align*}
since as a $\Z[t^{\pm1}]$-module $\Z$ has a length 1 projective resolution. Therefore the spectral sequence collapses on the 1-line at the $E^2$ page, and it suffices to compute $E^2_{0,1}$ and $E^2_{1,0}$.
We have that
\begin{align*}
E^2_{1,0}&= \Tor_1^{\Z[t^{\pm1}]}(H_0(X^{\infty}), \Z[\xi_n]) \\
&= \Tor_1^{\Z[t^{\pm1}]}(\Z[t^{\pm1}]/ \langle t-1 \rangle, \Z[\xi_n]) \\
&\cong\{ x \in \Z[\xi_n] \mid (t-1)\cdot x=0\} \\
& \cong \{ x \in \Z[\xi_n] \mid (\xi_n-1)x=0\} =0.
\end{align*}
Finally, since
\[E^2_{0,1} = \Tor_0^{\Z[t^{\pm1}]}(H_1(X^{\infty}), \Z[\xi_n]) \cong H_1(X^{\infty}) \otimes_{\Z[t^{\pm1}]} \Z[\xi_n]= \A_\xi(X)\]
 we obtain our desired result.
\end{proof}
%

Recall that given a slice knot $R$ with slice disc $\Delta_0$,  a slice knot $J$ with slice disc $D$, and an unknot $\eta$ in the complement of $R$, in Construction~\ref{cons:satellitedisc} we built degree one maps  $f \colon X_{R_{\eta}(J)} \to X_R$ and $g \colon X_{\Delta_D} \to X_{\Delta_0}$. The following proposition analyzes the $f$- and $g$-induced maps on certain twisted first homology modules under some additional conditions.

\begin{prop}\label{prop:satellitekernel}
Let $R$ be a slice knot with slice disc $\Delta_0$  and $J$ be a slice knot with slice disc $D$.
Let $\eta$ be an unknot in the complement of $R$ which generates $\A(R)$.
Suppose that $n$ is prime and $\chi \colon H_1(\Sg_2(R)) \to \Z_n$ is a nontrivial map such that $\phi_\chi$ extends to  $\Phi \colon \pi_1(X_{\Delta_0}) \to \Z_2 \ltimes \Z_{n}$.
There are identifications
\begin{align*}
 H_1^{\phi_\chi \circ f_*}(X_{R_{\eta}(J)}, \Z[\xi_{n}]) &\cong H_1^{\phi_\chi}(X_R, \Z[\xi_{n}]) \oplus \A_{\xi_n}(J)^{1 \oplus \bar{1}} \\
H_1^{\Phi \circ g_*}(X_{\Delta_D}, \Z[\xi_{n}]) & \cong H_1^{\Phi}(X_{\Delta_0}, \Z[\xi_{n}]) \oplus \A_{\xi_n}(D)^{1 \oplus \bar{1}}.
 \end{align*}
Moreover, these are natural with respect to inclusion maps; in particular
\[P:= \ker\left(H_1^{\phi_\chi \circ f_*} (X_{R_{\eta}(J)}, \Z[\xi_{n}])  \to  H_1^{\Phi \circ g_*}(X_{\Delta_D}, \Z[\xi_{n}]) \right)
\]
splits as the direct sum of the corresponding kernels $P_R \oplus P_J^{1 \oplus \bar{1}}$, where
\begin{align*}
P_R&:=\ker\left((H_1^{\phi_\chi}(X_R, \Z[\xi_{n}]) \to H_1^{ \Phi}(X_{\Delta_0}, \Z[\xi_{n}]) \right)\\
P_J^{1 \oplus \bar{1}} &:= \ker\left(\A_{\xi_n}(J)^{1 \oplus\bar{1}} \to \A_{\xi_n}(D)^{1 \oplus\bar{1}} \right)= \ker\left(\A_{\xi_n}(J) \to \A_{\xi_n}(D)\right)^{1 \oplus\bar{1}}.
 \end{align*}
\end{prop}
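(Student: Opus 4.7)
My plan is to adapt the proof of Proposition~\ref{prop:homologycoverinfection} to the twisted setting, using twisted Mayer--Vietoris for the satellite decompositions
\[X_{R_\eta(J)} = X_{R \cup \eta} \cup_{T^2} X_J \qquad \text{and} \qquad X_{\Delta_D} = X_{\Delta_0} \cup_{S^1 \times D^2} X_D,\]
and comparing them with the tautological cases $J = U$ and $D = E$, in which $X_R$ and $X_{\Delta_0}$ admit analogous decompositions with $X_U$ and $X_E$ in the second slot. The degree-one collapse maps $f$ and $g$ from Construction~\ref{cons:satellitedisc} restrict to the identity on the first piece of each decomposition, so the pulled-back representations $\phi_\chi \circ f_*$ and $\Phi \circ g_*$ agree with $\phi_\chi$ and $\Phi$ on $\pi_1(X_{R \cup \eta})$ and on $\pi_1(X_{\Delta_0})$ respectively.

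The central computation is to identify $\phi_\chi \circ f_*$ restricted to $\pi_1(X_J)$. Because $\eta$ generates $\A(R)$ we have $\lk(\eta, R) = 0$, so $\varepsilon_R(\eta) = 0$ and $\phi_\chi(\eta) = (0, c)$ with $c := \chi([\wt\eta])$. Since $\eta$ generates $\A(R)$ and $\chi$ is nontrivial on $H_1(\sr) = \A(R)/(t+1)$, we have $c \neq 0$; since $n$ is prime, $c$ is a unit. Under the standard identification of $X_U$ with the solid torus $\nu(\eta) \subseteq X_R$, the collapse $f_{0*}\colon \pi_1(X_J) \to \pi_1(X_U) \to \pi_1(X_R)$ sends $\mu_J$ to $\eta$ and factors through $\varepsilon_J \colon \pi_1(X_J) \to \Z$. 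Hence $\phi_\chi \circ f_*$ restricts to the abelian representation
\[\pi_1(X_J) \xrightarrow{\varepsilon_J} \Z \longrightarrow \GL_2(\Z[\xi_n]), \qquad 1 \longmapsto \operatorname{diag}(\xi_n^c, \xi_n^{-c}).\]
Lemma~\ref{lem:computation1}, applied with the primitive $n$-th root of unity $\xi_n^c$ in place of $\xi_n$, then gives $H_1^{\phi_\chi \circ f_*}(X_J) \cong \A_{\xi_n^c}(J)^{1 \oplus \bar{1}}$, which I identify with $\A_{\xi_n}(J)^{1 \oplus \bar{1}}$ via the Galois automorphism $\xi_n \mapsto \xi_n^c$ of $\Z[\xi_n]$. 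The same computation (together with Remark~\ref{rem:h0}) shows $H_*^{\phi_\chi}(X_U) = 0$ in all degrees, since $X_U \simeq S^1$ and the restricted representation is nontrivial.

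The vanishing of $H_*^{\phi_\chi}(X_U)$ causes the Mayer--Vietoris sequence for $X_R$ to degenerate, expressing $H_*^{\phi_\chi}(X_R)$ purely in terms of the shared pieces $X_{R \cup \eta}$ and $T^2$, while the corresponding sequence for $X_{R_\eta(J)}$ differs only by the additional $H_*^{\phi_\chi \circ f_*}(X_J)$ summand. Comparing the two sequences, via a geometric homotopy section of $f$ obtained by extending the identity on $X_{R \cup \eta}$ by any map $X_U \to X_J$ that restricts to the boundary identification (which automatically satisfies $f \circ s \simeq \Id$ because $X_U \simeq S^1$ and the composite induces the identity on $\pi_1$), realizes $f_* \colon H_1^{\phi_\chi \circ f_*}(X_{R_\eta(J)}) \twoheadrightarrow H_1^{\phi_\chi}(X_R)$ as a split surjection with kernel $\A_{\xi_n}(J)^{1 \oplus \bar{1}}$, yielding the first claimed splitting. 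The parallel argument with $E$, $X_E$, $X_D$, and $\Phi$ replacing $U$, $X_U$, $X_J$, and $\phi_\chi$ produces the splitting $H_1^{\Phi \circ g_*}(X_{\Delta_D}) \cong H_1^{\Phi}(X_{\Delta_0}) \oplus \A_{\xi_n}(D)^{1 \oplus \bar{1}}$. All these splittings are natural with respect to the inclusions $(X_{R_\eta(J)}, X_J) \hookrightarrow (X_{\Delta_D}, X_D)$ baked into Construction~\ref{cons:satellitedisc}, so the kernel $P$ decomposes as $P_R \oplus P_J^{1 \oplus \bar{1}}$ as claimed.

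The main obstacle will be the representation bookkeeping: one must carefully verify that $f_{0*}(\mu_J)$ and $f_{0*}(\lambda_J)$ both land in the cyclic subgroup of $\pi_1(X_R)$ generated by $\eta$ (using $\lk(\eta,R) = 0$ and that the satellite pattern has winding number zero), and then accurately account for the Galois twist $\xi_n \mapsto \xi_n^c$ when identifying $\A_{\xi_n^c}(J)$ with $\A_{\xi_n}(J)$. A secondary concern is constructing the Mayer--Vietoris splittings naturally enough that the kernel decomposition follows directly from the inclusions on both the satellite and slice disc sides.
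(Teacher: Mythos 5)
Your high-level strategy mirrors the paper's: use the twisted Mayer--Vietoris sequences for the two satellite decompositions, show that the ``pattern pieces'' $X_U$, $X_E$, $S^1 \times D^2$ have vanishing twisted $H_1$, and compute the $X_J$ (resp.\ $X_D$) contributions via Lemma~\ref{lem:computation1}. Your observation that the restriction of $\phi_\chi \circ f_*$ to $\pi_1(X_J)$ sends a generator to $\operatorname{diag}(\xi_n^c,\xi_n^{-c})$ for a unit $c$, so that one literally obtains $\A_{\xi_n^c}(J)$ and must invoke a Galois twist to write $\A_{\xi_n}(J)$, is a genuine subtlety the paper glosses over; for $n=3$, which is all that is used later, $c=\pm 1$ and this is invisible, but you are right that it must be addressed in general.

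However, there is a real gap in the step that extracts the splitting from the Mayer--Vietoris comparison. You propose to construct a homotopy section $s=\Id \cup s_0 \colon X_R \to X_{R_\eta(J)}$ of $f$ by extending the identity on $X_{R}\smallsetminus\nu(\eta)$ over $X_U \cong S^1\times D^2$ by a map $s_0 \colon X_U \to X_J$ restricting to the boundary identification on $T^2$. No such $s_0$ exists in general: $X_U$ is obtained from $T^2$ by attaching a $2$--cell along $\lambda_U$, and under the satellite gluing $\lambda_U$ is identified with $\lambda_J$, which is not null-homotopic in $X_J$ unless $J$ is unknotted. So the inclusion $T^2 \hookrightarrow X_J$ does not extend over $X_U$, and indeed already at the level of $\pi_1$ the surjection $f_*\colon \pi_1(X_{R_\eta(J)}) \to \pi_1(X_R) \cong \pi_1(X_R\smallsetminus\eta)/\langle\!\langle \mu_\eta \rangle\!\rangle$ has no splitting compatible with the amalgamated product structure, precisely because $\mu_\eta=\lambda_J$ survives in $\pi_1(X_{R_\eta(J)})$.

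What does work, and is what the paper does, is to show directly that the Mayer--Vietoris connecting map $j_J \colon H_1^{\phi}(T^2;\Z[\xi_n]) \to H_1^{\phi}(X_J;\Z[\xi_n])$ vanishes. One computes $H_1^{\phi}(T^2) \cong \bigl(\Z[\xi_n]/(\xi_n-1)\bigr)^{1\oplus\bar 1}$, generated by classes supported on the curve $\alpha$ which on the $X_J$ side is $\lambda_J$; since $[\lambda_J]=0$ in $H_1(X_J^\infty)$, the corresponding twisted class dies. The vanishing of $j_J$ then yields the algebraic splitting $H_1^{\phi_\chi\circ f_*}(X_{R_\eta(J)}) \cong H_1^{\phi_\chi}(X_R)\oplus H_1^{\phi}(X_J)$ directly from the sequence, without any geometric section. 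The same phenomenon occurs on the disc side, and one must also run the diagram chase relating the four Mayer--Vietoris sequences to see that the kernel $P$ splits as $P_R\oplus P_J^{1\oplus\bar 1}$; this is not automatic from having the two identifications ``naturally,'' but follows from an explicit chase. You should replace the appeal to a section by the $j_J=0$ computation, and then carry out the kernel identification explicitly.
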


The proof of Proposition~\ref{prop:satellitekernel}, while somewhat long and notation heavy, essentially follows from careful consideration of the relationship between four Mayer-Vietoris long exact sequences.
These sequences are related by the maps induced from the following commutative diagram, where we remind the reader that horizontal maps are inclusions and vertical maps are defined as in Construction~\ref{cons:satellitedisc}:
\[ \begin{tikzcd}
X_{R_{\eta}(J)} =X_R \ssm \nu(\eta) \cup X_J \arrow[swap, "f= \Id \cup f_0"]{d} \arrow["i_\eta\, \cup\,  i_J"]{r} & X_{\Delta_0} \cup X_D=X_{\Delta_D} \arrow[" \Id \cup g_0=g"]{d}  \\
 X_R= X_R \ssm \nu(\eta) \cup X_U \arrow[swap, "i_\eta \, \cup\, i_U"]{r} & X_{\Delta_0} \cup  X_E=X_{\Delta_0}.\\
\end{tikzcd}
\]

\begin{proof}
We abbreviate $X_R \ssm \nu(\eta)$ by $X_R \ssm \eta$ and let $\xi= \xi_n= e^{2 \pi i/n}$.

Since $\eta \in \pi_1(X_R)^{(1)}$, when we restrict $(\alpha \circ \phi_\chi) \circ f_*$ to $\pi_1(X_J)$ we see that every element of $\pi_1(X_J)$ is sent to a matrix of the form \[\left[ \begin{array}{cc} \xi^b & 0 \\0  & \xi^{-b} \end{array} \right]\] for some $b \in \Z_{n}$.
In particular, this restriction factors through $H_1(X_J;\Z) \cong \Z$.
The fact that $\eta$ generates $\A(R)$ implies that the lifts of $\eta$
 to $X_R^2$ generate $TH_1(X_R^2)$, since $TH_1(X_R^2)\cong \A(R)/ \langle t^2-1 \rangle$~\cite[Lemma~2.2]{Friedl:2003-4}.
 However, the longitudes of $\eta$ are identified with the meridians of $J$ in $X_{R_{\eta}(J)}$, and so since $\chi$ is a nontrivial (hence surjective) character, the map $\pi_1(X_J) \to \Z_n$ given by $\gamma \mapsto b(\gamma) \in \Z_n$ is surjective. Henceforth, unless otherwise specified, all homology in this proof is taken to be twisted with $\Z[\xi]$-coefficients induced by (restrictions of) the maps $\phi_\chi$ and $\Phi$,  composed with $f_*$ or $g_*$ as appropriate.

We are in the setting of Lemma~\ref{lem:computation1}  and therefore
$H_1(X_J) \cong \A_{\xi}(J)^{1 \oplus\bar{1}}$ and $H_1(X_D) \cong \A_{\xi}(D)^{1 \oplus\bar{1}}$.
The decompositions outlined in Construction~\ref{cons:satellitedisc}
are related by inclusion and degree one maps in such a way that, when we take homology with  twisted $\Z[\xi]$-coefficients, we obtain a  commutative diagram.
Note that the twisted homology $H_1(X_U)= H_1(X_E)= H_1(S^1 \times D^2)=0$, by Lemma~\ref{lem:computation1}, since each of these spaces have trivial Alexander module. Also, the maps $H_0(T^2) \to H_0(X_*)$ for $*=U, J$ and $H_0(S^1 \times D^2) \to H_0(X_*)$ for $*=E,D$ are isomorphisms, as follows from an analysis as in Remark~\ref{rem:h0}.
All horizontal sequences are exact, since they arise from Mayer-Vietoris sequences. We have simplified the following diagram using these observations:
\[\begin{tikzcd}[column sep=1.4em]
 &0 \arrow{dd} \arrow{rr}& &\begin{array}{c} H_1(X_{\Delta_0}) \\ \oplus \\ H_1(X_D) \end{array} \arrow["(\Id  \, 0)" pos=0.8]{dd} \arrow{rr}{(\pi_\Delta \, \pi_D)}& & H_1(X_{\Delta_D}) \arrow["g_*" pos=0.85]{dd} \arrow{r} &0\\
H_1(T^2) \arrow{dd}{\Id} \arrow{ur} \arrow[crossing over, "(j_{\eta} \,  j_J)" pos=.85]{rr}& &\begin{array}{c}  H_1(X_R \ssm \eta) \\ \oplus \\ H_1(X_J) \end{array}  \arrow{ur}{(i_{\eta} \, i_J)} \arrow[crossing over, "(\pi_\eta \, \pi_J)" pos=.7]{rr} && H_1(X_{R_{\eta}(J)}) \arrow{ur}{i}  \arrow[crossing over]{rr}&&0  \\
 &0 \arrow{rr}& & H_1(X_\Delta)  \arrow["\pi" pos=.3]{rr} & & H_1(X_{\Delta}) \arrow{r} &0\\
H_1(T^2) \arrow{ur} \arrow{rr}{j_R=j_{\eta}} & & H_1(X_R \ssm \eta)  \arrow[crossing over, leftarrow, "(\Id \, 0)" pos=0.2]{uu} \arrow{ur}{i_\eta} \arrow{rr}{\pi_R} && H_1(X_{R}) \arrow[crossing over, leftarrow, "f_*" pos=0.15]{uu} \arrow{ur}{i_R} \arrow{r}  & 0.& \\
\end{tikzcd}
\]
For reasons of concision, in the above diagram we use $(f_1 \, \, f_2)$ to variously refer to any of the maps $\left[ \begin{array}{c} f_1 \\ f_2 \end{array} \right]$, $[f_1 \,\, f_2]$, or $\left[ \begin{array}{cc} f_1 & 0 \\0 & f_2 \end{array} \right]$ as appropriate.

We immediately obtain that
\[ [\pi_\Delta \, \, \pi_D] \colon H_1(X_{\Delta_0}) \oplus H_1(X_D) \to H_1(X_{\Delta_D})\] is an isomorphism, which is the second identification of the proposition.
We also see that
\[H_1(X_R)= \Imm(\pi_R) \cong H_1(X_R \ssm \eta)/ \ker(\pi_R)= H_1(X_R \ssm \eta)/ \Imm(j_R)\]
and similarly that
\[H_1(X_{R_\eta(J)})= \Imm([\pi_\eta \, \, \pi_J]) \cong \big(H_1(X_R \ssm \eta)\, \oplus\, H_1(X_J) \big)/ \Imm \left[ \begin{array}{c}j_R \\ j_J\end{array} \right].\]
We can directly compute that
\[H_1(T^2)= H_1(C_*(\widetilde{T^2}) \otimes_{\Z[\pi_1(T^2)]} \Z[\xi]^2)\cong  \left(\Z[\xi]/ (\xi-1)\right)^{1 \oplus\bar{1}}\]
 is generated  as a $\Z[\xi]$-module by $\alpha \otimes [0,1]$ and $\alpha \otimes [1,0]$, where $\alpha$ is the curve on $T^2$ identified with $\mu_{\eta}$ in $X_R \ssm \eta$ and $\lambda_J$ in $X_J$. Since $[\lambda_J]=0 \in H_1(X_J^\infty)$, we see that
\[ j_J( \alpha \otimes [0,1])= j_J(\alpha \otimes [1,0])=0 \text{ in } H_1(X_J)\]
and hence that $j_J=0$.

It follows that the map induced by $[\pi_\eta \, \, \pi_J]$
from  $H_1(X_R \ssm \eta) / \Imm(j_\eta) \, \oplus \, H_1(X_J)$ to $H_1(X_{R_\eta(J)})$
 is an isomorphism, and that our desired isomorphism is given by the composition\footnote{The labels of the maps in Equation~(\ref{eqn:isomorphism}) are mild abuses of notation. In particular, $\pi_R \colon H_1(X_R \ssm \eta) \to H_1(X_R)$ is not itself an isomorphism and hence does not have an inverse until we mod out by $\Imm(j_\eta)$, and $[\pi_\eta \, \, \pi_J]$ actually has domain $H_1(X_R \ssm \eta) \oplus H_1(X_J)$, though it of course induces a well-defined map on $H_1(X_R \ssm \eta) / \Imm(j_\eta) \oplus H_1(X_J)$. Nevertheless, we hope the reader finds the reminder of how these maps are induced sufficiently helpful so as  to outweigh the indignity of slightly misleading labels.}
\begin{align}\label{eqn:isomorphism}
\Phi \colon H_1(X_R) \oplus H_1(X_J) \xrightarrow{\left[\begin{array}{cc} \pi_R^{-1}  & 0 \\ 0 & \Id \end{array} \right]} H_1(X_R \ssm \eta) / \Imm(j_\eta)& \oplus H_1(X_J) \xrightarrow{[\pi_\eta \, \, \pi_J]} H_1(X_{R_\eta(J)}).
\end{align}

It remains to show that $\Phi^{-1}(\ker(i))=\ker(i_R) \oplus \ker(i_J)$, which will follow from some diagram chasing,

\begin{claim}
  $\Phi^{-1}(\ker(i)) \subseteq \ker(i_R) \oplus \ker(i_J)$.
\end{claim}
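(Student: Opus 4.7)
The plan is a diagram chase through the commutative prism of Mayer--Vietoris sequences constructed earlier. Suppose $(a,b) \in H_1(X_R) \oplus H_1(X_J)$ satisfies $i(\Phi(a,b)) = 0$. First, choose any lift $a' \in H_1(X_R \ssm \eta)$ with $\pi_R(a') = a$; such a lift exists because the Mayer--Vietoris sequence for $X_R = (X_R \ssm \eta) \cup X_U$ ends $H_1(X_R \ssm \eta) \xrightarrow{\pi_R} H_1(X_R) \to 0$, using $H_1(X_U) = 0$. By the definition of $\Phi$ in Equation~(\ref{eqn:isomorphism}), $\Phi(a,b) = \pi_\eta(a') + \pi_J(b)$.

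Next, apply $i$ and use commutativity of the two inclusion-induced squares, $i \circ \pi_\eta = \pi_\Delta \circ i_\eta$ and $i \circ \pi_J = \pi_D \circ i_J$, to rewrite the assumption as
\[ 0 \;=\; i(\Phi(a,b)) \;=\; \pi_\Delta(i_\eta(a')) + \pi_D(i_J(b)) \;=\; [\pi_\Delta \ \pi_D]\bigl((i_\eta(a'),\,i_J(b))\bigr). \]
Since $[\pi_\Delta \ \pi_D]$ is an isomorphism---this is the top face of the prism, already identified as part of the second isomorphism in the proposition statement---I conclude $i_\eta(a') = 0$ and $i_J(b) = 0$. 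The latter gives $b \in \ker(i_J)$ immediately. For the remaining coordinate, I use commutativity of the bottom face, $i_R \circ \pi_R = \pi \circ i_\eta$, where $\pi \colon H_1(X_{\Delta_0}) \to H_1(X_{\Delta_0})$ is the Mayer--Vietoris-induced map for $X_{\Delta_0} = X_{\Delta_0} \cup_{S^1 \times D^2} X_E$. This $\pi$ is an isomorphism, as already noted in setting up the bottom row of the diagram. Therefore $i_R(a) = \pi(i_\eta(a')) = 0$, so $a \in \ker(i_R)$.

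The proof is a pure diagram chase, with no real conceptual obstacle; the main points requiring care are verifying that the choice of lift $a'$ is harmless (different lifts differ by $\ker(\pi_R) = \Imm(j_\eta) \subseteq \ker(\pi_\eta)$, which is absorbed) and correctly identifying $[\pi_\Delta \ \pi_D]$ and $\pi$ as isomorphisms from the top and bottom faces respectively. Once those are in hand, the chase closes cleanly and yields $\Phi^{-1}(\ker(i)) \subseteq \ker(i_R) \oplus \ker(i_J)$.
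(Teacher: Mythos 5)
Your proof is correct, and it follows essentially the same diagram chase as the paper: lift to the Mayer--Vietoris sequences, use commutativity of the inclusion-induced squares, and invoke the isomorphism $[\pi_\Delta \ \pi_D]$ to extract $i_\eta(a')=0$ and $i_J(b)=0$. The one notable difference is in how $i_R(a)=0$ is concluded. The paper shows $\pi_R(a) = f_*(x)$ (where $x = \Phi(a,b)$) and then uses commutativity of the outer square $i_R\circ f_* = g_*\circ i$ involving the degree-one maps $f_*$ and $g_*$. You instead observe $i_\eta(a')=0$ directly from the isomorphism $[\pi_\Delta\ \pi_D]$ and push it forward via $i_R\circ\pi_R = \pi\circ i_\eta$, which sidesteps $f_*$ and $g_*$ entirely and gives a marginally leaner chase (you do not even need $\pi$ to be an isomorphism, only that the square commutes). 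Your remark about the choice of lift $a'$ is also accurate: lifts differ by $\ker(\pi_R)=\Imm(j_\eta)$, and since $j_J=0$ the kernel of $[\pi_\eta\ \pi_J]$ is exactly $\Imm(j_\eta)\times 0$, so $\Imm(j_\eta)\subseteq\ker(\pi_\eta)$ and the formula $\Phi(a,b)=\pi_\eta(a')+\pi_J(b)$ is well-defined.
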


Let $x \in \ker(i)$. Since $(\pi_\eta \oplus \pi_J)$ is onto, there exists $a \in H_1(X_R \ssm \eta)$ and $b \in H_1(X_J)$ such that $(\pi_\eta \oplus \pi_J)(a,b)=x$. Moreover, $(\pi_R(a), b)= \Phi^{-1}(x)$, so it suffices to show that
\[i_R(\pi_R(a))=0 \in H_1(X_{\Delta_0}) \text{ and }i_J(b)=0 \in H_1(X_D).\]
Observe that by the commutativity of our large diagram,
 \begin{align*}
 \pi_R(a)= (\pi_R \circ [\Id \, \, 0]) (a, b)= (f_*\circ [\pi_\eta \,\, \pi_J])(a,b)= f_*(x).
\end{align*}
Therefore
\begin{align*}
(i_R \circ \pi_R)(a)&=(i_R \circ f_*) (x)= (g_* \circ i)(x)= g_*(0)=0.
\end{align*}
In order to show that $i_J(b)=0$, observe that
\begin{align*}
\left([\pi_\Delta \,\, \pi_D] \circ
\left[ \begin{array}{cc} i_\eta & 0 \\ 0 & i_J \end{array}\right]\right)(a, b)
 = (i \circ [\pi_\eta \,\, \pi_J])(a, b)= i(x)=0.
\end{align*}
But $[\pi_\Delta \,\, \pi_D]$ is an isomorphism, and so it follows that
\[\left[\begin{array}{cc} i_\eta & 0 \\ 0& i_J\end{array} \right](a, b)= (i_\eta(a), i_J(b))=0.\]
So $i_J(b)=0$ as desired. This completes the proof of the claim that $\Phi^{-1}(\ker(i)) \subseteq \ker(i_R) \oplus \ker(i_J)$.
\\

\begin{claim}
  $\Phi^{-1}(\ker(i)) \supseteq \ker(i_R) \oplus \ker(i_J)$.
\end{claim}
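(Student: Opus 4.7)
The plan is to chase the large commutative diagram in the reverse direction. Given $(y_R,y_J) \in \ker(i_R) \oplus \ker(i_J)$, I would first use surjectivity of $\pi_R \colon H_1(X_R \ssm \eta) \to H_1(X_R)$ (immediate from the Mayer-Vietoris sequence on the bottom, since the sequence terminates in $0$) to choose a lift $a \in H_1(X_R \ssm \eta)$ with $\pi_R(a) = y_R$. By the definition of $\Phi$ in Equation~(\ref{eqn:isomorphism}), we then have $\Phi(y_R, y_J) = \pi_\eta(a) + \pi_J(y_J)$, so the task reduces to showing that $i\big(\pi_\eta(a) + \pi_J(y_J)\big) = 0$.

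Apply $i$ and use the two commutative squares on the right face of the cube, namely $i \circ \pi_\eta = \pi_\Delta \circ i_\eta$ and $i \circ \pi_J = \pi_D \circ i_J$, to rewrite
\[ i(\Phi(y_R, y_J)) \;=\; \pi_\Delta(i_\eta(a)) + \pi_D(i_J(y_J)). \]
Since $[\pi_\Delta \,\, \pi_D]$ is an isomorphism (already established in the forward direction of the argument), it suffices to check that $i_\eta(a) = 0$ and $i_J(y_J) = 0$ separately. The second is immediate from the assumption $y_J \in \ker(i_J)$.

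For the first, I would use the commutativity of the bottom face of the cube: the relation $\pi \circ i_\eta = i_R \circ \pi_R$, where $\pi \colon H_1(X_{\Delta_0}) \to H_1(X_{\Delta_0})$ is the map on the bottom Mayer-Vietoris. As observed earlier, $\pi$ is an isomorphism because $H_1(X_E) = 0$ and the Mayer-Vietoris sequence for $X_{\Delta_0} = X_{\Delta_0} \cup X_E$ forces $\pi_\Delta$ in the bottom row to be an isomorphism onto $H_1(X_{\Delta_0})$. Thus $i_\eta(a) = \pi^{-1}(i_R(\pi_R(a))) = \pi^{-1}(i_R(y_R)) = 0$, using $y_R \in \ker(i_R)$. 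Combining, $i(\Phi(y_R,y_J)) = 0$, so $\Phi(y_R,y_J) \in \ker(i)$, which gives the reverse inclusion.

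There is no serious obstacle here beyond careful bookkeeping; the whole argument is a direct diagram chase that mirrors the forward inclusion, with the only subtle point being the observation that the bottom-face map $\pi$ is invertible so that $i_\eta(a)$ is controlled by $i_R(y_R)$ rather than only by $i_R \circ \pi_R$.
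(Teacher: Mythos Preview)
Your proposal is correct and follows essentially the same approach as the paper's proof: lift $y_R$ through $\pi_R$, use the commutativity relation $\pi \circ i_\eta = i_R \circ \pi_R$ together with the invertibility of $\pi$ to force $i_\eta(a)=0$, and handle the $J$-component via $i \circ \pi_J = \pi_D \circ i_J$. The only cosmetic differences are that the paper treats the two summands $\ker(i_R)$ and $\ker(i_J)$ separately rather than simultaneously, and that your invocation of the isomorphism $[\pi_\Delta\ \pi_D]$ is unnecessary for this direction---once $i_\eta(a)=0$ and $i_J(y_J)=0$, the sum $\pi_\Delta(i_\eta(a))+\pi_D(i_J(y_J))$ vanishes regardless.
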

It suffices to show that both $\ker(i_R)$ and $\ker(i_J)$ are contained in $\Phi^{-1}(\ker(i))$.
Observe that if $b \in \ker(i_J)$ then
\begin{align*}
i(\Phi(b))= i(\pi_J(b))= \pi_D(i_J(b))= \pi_D(0)=0,
\end{align*}
so $b \in \Phi^{-1}(\ker(i))$.
Now let $\alpha \in \ker(i_R)$ to show that $\Phi(\alpha) \in \ker(i)$. Let $a \in H_1(X_R \ssm \eta)$ be such that $\pi_R(a)= \alpha$, and observe that $\Phi(\alpha)= \pi_{\eta}(a)$.
We have that
\begin{align*}
(\pi \circ i_\eta)(a)=(i_R \circ \pi_R)(a) = i_R(\alpha)=0.
\end{align*}
Since $\pi$ is an isomorphism, this implies that $i_\eta(a)=0$ and hence that
\[i \left(\Phi(\alpha)\right)= i\left(  \pi_\eta(a)\right)= \pi_\Delta \left( i_\eta(a)\right)= \pi_\Delta(0)=0,
\]
as desired.
This completes the proof of the claim that  $\Phi^{-1}(\ker(i)) \supseteq \ker(i_R) \oplus \ker(i_J)$.

The last two claims combine to show that $\Phi^{-1}(\ker(i))=\ker(i_R) \, \oplus\,  \ker(i_J)$, which completes the proof of Proposition~\ref{prop:satellitekernel}.
\end{proof}

Note that given a properly embedded disc $D$ in $D^4$ and a knotted 2-sphere $S$ in $S^4$, we can decompose $X_{D\#S}= X_D \cup_{S^1 \times D^2} X_S$. It follows that the double cover is decomposed analogously; gluing in the branch set and applying a straightforward Mayer-Vietoris argument tells us that
\[H_1(\Sigma_2(D^4, D\#S)) \cong H_1(\Sigma_2(D^4, D)) \oplus H_1(\Sigma_2(S^4, S)).\]
Given  $\chi \colon H_1(\Sigma_2(K))) \to \Z_n$ that extends to $\chi_D\colon H_1(\Sigma_2(D^4, D)) \to \Z_n$, define
\[\chi_{D\#S} \colon H_1(\Sigma_2(D^4, D\#S)) \cong H_1(\Sigma_2(D^4, D)) \oplus H_1(\Sigma_2(S^4, S))  \xrightarrow{\chi_D\, \oplus \, 0} \Z_n.\]
We can now show an analogue of Proposition~\ref{prop:2spheresum} in the context of twisted homology.

\begin{prop}\label{prop:2knotsandtwistedh1}
Let $D$ be a properly embedded disc in $D^4$ with boundary $K$, and let $S$ be a knotted 2-sphere in $S^4$.
Let  $\chi \colon H_1(\Sigma_2(K)) \to \Z_n$ be a map that extends to $\chi_D\colon H_1(\Sigma_2(D^4, D)) \to \Z_n$, and let $\chi_{D\#S}$ be as above.
Then
\[
\ker\left(H_1^{\phi_\chi}(X_K) \to H_1^{\phi_{\chi_D}}(X_D)\right) = \ker\left(H_1^{\phi_\chi}(X_K) \to H_1^{\phi_{\chi_{D\#S}}}(X_{D \#S})\right).
\]
\end{prop}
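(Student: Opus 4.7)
By construction, $\chi_{D\#S}$ restricts to $\chi_D$ on the $H_1(\Sigma_2(D^4, D))$ summand, so the inclusion $X_D \hookrightarrow X_{D\#S}$ intertwines $\phi_{\chi_D}$ with $\phi_{\chi_{D\#S}}$, and the map $H_1^{\phi_\chi}(X_K) \to H_1^{\phi_{\chi_{D\#S}}}(X_{D\#S})$ factors through $H_1^{\phi_{\chi_D}}(X_D)$. This immediately yields one containment of the two kernels; the reverse containment follows once we show that the inclusion-induced map
$$j_D\colon H_1^{\phi_{\chi_D}}(X_D;\Z[\xi_n]) \to H_1^{\phi_{\chi_{D\#S}}}(X_{D\#S};\Z[\xi_n])$$
is injective. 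Thus the entire proposition reduces to this injectivity claim.

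My plan is to verify injectivity of $j_D$ via the twisted Mayer-Vietoris sequence for the decomposition $X_{D\#S}=X_D\cup_A X_S$, where $A\cong S^1\times D^2$ is the thickened meridian at the connect-sum site. Since the preferred meridian $\mu_0$ lies in $X_D$, the image of $\phi|_A$ is contained in that of $\phi|_{X_D}$, so Remark~\ref{rem:h0} shows that $H_0^\phi(A)\to H_0^\phi(X_D)\oplus H_0^\phi(X_S)$ is injective. The tail of the sequence therefore reduces to
$$H_1^\phi(A)\xrightarrow{(i_D^A,\,-i_S^A)} H_1^\phi(X_D)\oplus H_1^\phi(X_S)\xrightarrow{j_D+j_S} H_1^\phi(X_{D\#S})\to 0,$$
from which a diagram-chase yields $\ker(j_D)=i_D^A(\ker(i_S^A))$. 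A direct cellular computation (as in Remark~\ref{rem:h0}) gives $H_1^\phi(A)\cong \Z[\xi_n]$, generated by the lift of $\mu^2$ to the corresponding cover of $A$. Thus injectivity of $j_D$ is equivalent to injectivity of $i_S^A$.

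The main obstacle is showing $i_S^A$ is injective. Because $\chi_{D\#S}$ vanishes on $H_1(\Sigma_2(S^4,S))$, the image of $\pi_1(X_S)^{(1)}$ in $H_1(\Sigma_2(D^4,D\#S))$ lies in the summand killed by $\chi_{D\#S}$, so the restriction $\phi|_{\pi_1(X_S)}$ factors through the abelianization $\pi_1(X_S)^{ab}=\Z$. A change of basis identifies the induced $\Z[\Z_2]$-module structure on $\Z[\xi_n]^2$ with the regular representation $\Z[\Z_2]\otimes_\Z\Z[\xi_n]$, yielding
$$H_1^\phi(X_S;\Z[\xi_n]^2)\cong H_1(X_S^2;\Z)\otimes_\Z\Z[\xi_n],$$
under which $i_S^A$ sends its generator to $[\mu^2]\otimes 1$. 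The Hopf formula applied to the split extension $\pi_1(X_S^2)\cong \pi_1(\wt{X}_S)\rtimes\langle \mu^2\rangle$ then produces the direct-sum splitting
$$H_1(X_S^2;\Z)\cong \A(S)/(t^2-1)\A(S)\,\oplus\,\Z\langle[\mu^2]\rangle,$$
so $[\mu^2]$ generates a free $\Z$-summand. Since $\Z[\xi_n]$ is flat over $\Z$, this remains a free $\Z[\xi_n]$-summand of $H_1^\phi(X_S)$, and $i_S^A$ is injective. This completes the proof.
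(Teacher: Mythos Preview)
Your argument is correct and shares the paper's overall framework: both use the Mayer--Vietoris sequence for $X_{D\#S}=X_D\cup_A X_S$ and reduce the result to the injectivity of $i_S^A\colon H_1^\phi(A)\to H_1^\phi(X_S)$. The difference is in how this injectivity is established. The paper computes $H_1^\phi(X_S)\cong \Z[\xi_n]\oplus\big(\A(S)\otimes_{\Z[t^{\pm1}]}\Z[\xi_n]^2\big)$ via the K\"unneth spectral sequence and then appeals to naturality of the spectral sequence to see that $i_S^A$ carries $H_1^\phi(A)$ isomorphically onto the $\Z[\xi_n]$ summand. Your route avoids the spectral sequence: since $\phi|_{\pi_1(X_S)}$ factors through $\Z_2$ acting via the regular representation, Shapiro's lemma identifies $H_1^\phi(X_S)\cong H_1(X_S^2;\Z)\otimes_\Z\Z[\xi_n]$, and abelianizing the split extension $\pi_1(X_S^2)\cong\pi_1(X_S^\infty)\rtimes\langle\mu^2\rangle$ exhibits the free $\Z$-summand generated by $[\mu^2]$ directly. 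This is more elementary and makes the injectivity transparent; the two answers agree since $\A(S)\otimes_{\Z[t^{\pm1}]}\Z[\xi_n]^2\cong \big(\A(S)/(t^2-1)\big)\otimes_\Z\Z[\xi_n]$ when $t$ acts on $\Z[\xi_n]^2$ by the swap.

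One small imprecision: your justification for injectivity on $H_0$ invokes $\Imm(\phi|_A)\subseteq\Imm(\phi|_{X_D})$, but Remark~\ref{rem:h0} requires \emph{equality} of images to conclude that the $H_0$ map is an isomorphism, and containment alone need not give injectivity of $i_D^A$ on $H_0$. The clean fix (which the paper uses) is to observe instead that $\Imm(\phi|_A)=\Imm(\phi|_{X_S})$, both being generated by the image of a single meridian, so that $i_S^A$ is already an isomorphism on $H_0$; this suffices for the Mayer--Vietoris map on $H_0$ to be injective.
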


\begin{proof}
For a submanifold $Y \subset X_{D\#S}$ we can restrict  $\phi_{\chi_{D\#S}}$ to $\pi_1(Y)$ and, by a mild abuse of notation we let $H^{\phi_{\chi_{D\#S}}}_*(Y)$ denote the resulting twisted homology with $\Z[\xi_n]$-coefficients.

We shall use the decomposition $X_{D\#S}= X_D \cup_{S^1 \times D^2} X_S$.
First we compute the homology of $S^1 \times D^2$ and $X_S$.
Letting $t$ denote the generator of $\pi_1(S^2 \times D^2) \cong \Z$, we can pick a cell structure for (a space homotopy equivalent to) $S^1 \times D^2$ consisting of a single 0-cell and a single 1-cell and use this to compute
\begin{align*}
H^{\phi_{\chi_{D \#S}}}_1(S^1 \times D^2)
   &\cong \ker(\phi_{\chi_{D \# S}}(t) - \Id)  \\  &= \ker\left(\begin{bmatrix} 0 & 1 \\ 1 & 0 \end{bmatrix} \begin{bmatrix}\xi^b  &0 \\ 0 & \xi^{-b}  \end{bmatrix} - \begin{bmatrix} 1 & 0 \\ 0 & 1  \end{bmatrix} \colon \Z[\xi_n]^2 \to \Z[\xi_n]^2\right)
\text{, for some } b \in \Z
   \\    & = \ker \left( \begin{bmatrix} -1 & \xi^{-b} \\ \xi^b & -1  \end{bmatrix}\right) \cong \left\{
  (x,y)
   \in \Z[\xi_n]^2 \mid \xi^b x =y \right\} \cong \Z[\xi_n].
  \end{align*}
.

\begin{claim}
 We have that
  \[H^{\phi_{\chi_{D \#S}}}_1(X_S) \cong \Z[\xi_n] \oplus \left(\A(S) \otimes_{\Z[t^{\pm1}]} \Z[\xi_n]^2\right), \]
where on the right we have the action of $\Z[t^{\pm1}]$ on $ \Z[\xi_n]^2$ given by $t \cdot [x,\,y]= [y,\, x]$.
\end{claim}

To see this, use the K\"{u}nneth spectral sequence~\cite[Theorem 5.6.4]{Weibel94} as in the proof of Lemma~\ref{lem:computation1}.
Since $H_0(X^{\infty}) \cong \Z$, we obtain
\begin{align*}
E_{0,1}^2&= \A(S) \otimes_{\Z[t^{\pm1}]} \Z[\xi_n]^2\\
E_{1,0}^2&= \Tor_1^{\Z[t^{\pm1}]}(H_0(X_S^{\infty}), \Z[\xi_n]^2) \cong H^{\phi_{\chi_{D \#S}}}_1(S^1) \cong \Z[\xi_n] \\
E_{2,0}^2&= \Tor_2^{\Z[t^{\pm1}]}(H_0(X_S^{\infty}), \Z[\xi_n]^2) \cong H^{\phi_{\chi_{D \#S}}}_2(S^1) = 0.
\end{align*}
Since $E^2_{2,0} =0$ it follows that $E^2_{0,1} \cong E^{\infty}_{0,1}$. We also have $E^2_{1,0} \cong E^\infty_{1,0}$.
The spectral sequence therefore gives rise to a short exact sequence of $\Z[\xi_n]$-modules
\[0 \to \A(S) \otimes_{\Z[t^{\pm1}]} \Z[\xi_n]^2 \to H^{\phi_{\chi_{D \#S}}}_1(X_S) \to \Z[\xi_n] \to 0,\]
which splits since the last module is free.
This completes the proof of the claim.

Moreover, comparing the spectral sequences for $S^1 \times D^2$ and $X_S$ using naturality, it follows that the map $\Z[\xi_n] \cong H^{\phi_{\chi_{D \# S}}}_1(S^1 \times D^2) \to H^{\phi_{\chi_{D \# S}}}_1(X_S)$ is injective and maps onto $\Z[\xi_n]$.

Since the restriction of \[\phi_{\chi_{D\#S}} \colon \pi_1(X_{D \# S}) \to \Z_2 \ltimes \Z_n\] to $\pi_1(X_S)$ is the map $\gamma \mapsto ([\varepsilon_S(\gamma)], 0)$ we have that
\[H_0^{\phi_{\chi_{D \#S}}}(S^1 \times D^2) \to H_0^{\phi_{\chi_{D \#S}}}(X_S)\]
 is an isomorphism, see Remark~\ref{rem:h0}.
The Mayer-Vietoris sequence for $X_{D\#S}= X_D \cup_{S^1 \times D^2} X_S$ with $\Z[\xi_n]$-coefficients therefore gives us that
 \[H^{\phi_{\chi_{D\# S}}}_1(X_{D \#S}) \cong H^{\phi_{\chi_D}}_1(X_D) \oplus \left(\A(S) \otimes_{\Z[t^{\pm1}]} \Z[\xi_n]^2\right),\]
 since $H^{\phi_{\chi_{D \#S}}}_1(S^1 \times D^2) \cong \Z[\xi_n]$ maps onto the $\Z[\xi_n]$-summand of $H_1^{\phi_{\chi_{D \#S}}}(X_S)$.

  Since $X_K \subset X_D$, the inclusion induced map $H^{\phi_{\chi}}_1(X_K) \to H^{\phi_{\chi_{D \# S}}}_1(X_{D\# S})$ factors as
  \[H^{\phi_{\chi}}_1(X_K) \to H^{\phi_{\chi_{D}}}_1(X_{D}) \to H^{\phi_{\chi_{D}}}_1(X_{D}) \oplus (\A(S) \otimes_{\Z[t^{\pm1}]} \Z[\xi_n]^2) \cong  H^{\phi_{\chi_{D \# S}}}_1(X_{D\# S}).\]
  We saw that the central map is a split injection, the inclusion of the $H^{\phi_{\chi_{D}}}_1(X_{D})$ direct summand.  It follows that
 \[ \ker(H^{\phi_{\chi}}_1(X_K) \to H^{\phi_{\chi_{D\# S}}}_1(X_{D \#S})) \cong \ker(H^{\phi_{\chi}}_1(X_K) \to H^{\phi_{\chi_D}}_1(X_{D}))\]
 as desired.
\end{proof}

\subsection{Construction of examples and proof of Theorem~\ref{thm:C}}

 Recall from Notation~\ref{notation:Axi} that for a space $X$ and a root of unity $\xi$, we define
\[A_{\xi}(X):=\mathcal{A}(X) \otimes_{\Z[t^{\pm1}]} \Z[\xi].\]
Now let $J_0$ be a ribbon knot with preferred ribbon disc $D_0$
 such that
\[\A_{\xi_3}(J_0)/ \ker\left( \A_{\xi_3}(J_0) \to  \A_{\xi_3}(D_0)\right)\]
is nonzero.
The knot $J:= J_0 \# -J_0$ has two preferred slice (in fact ribbon) discs: $D_1$ consists of $D_0 \natural -D_0$ and $D_2$ is the standard ribbon disc for any knot of the form $K \# -K$ obtained by spinning.
Note that $ \A(J) \cong\A(J_0) \oplus \A(J_0)$, $\A(D_1)\cong \A(D_0) \oplus \A(D_0)$, and by the next lemma
$\A(D_2)\cong \A(J_0)$.

\begin{lem}\label{lem:spun}
  The spun slice disc satisfies $\A(D_2)\cong \A(J_0)$.
\end{lem}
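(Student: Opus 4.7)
The plan is to identify $X_{D_2}$ with the knot exterior $X_{J_0}$ up to homotopy in a way that respects the abelianization to $\Z$, so that the first Alexander modules automatically coincide.

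First I would make the spinning construction explicit: write $J_0$ as the closure of a $1$-string tangle $T$ in a $3$-ball $B \subset S^3$, meaning that $J_0 \cap B = T$ and $T' := J_0 \cap (S^3 \setminus \operatorname{int}(B))$ is an unknotted arc in the complementary ball $B'$. Identifying $D^4 = B \times I$ with $\partial D^4 = S^3$ in the usual way, set $D_2 := T \times I$. A check with orientations confirms that $\partial D_2 = J_0 \# -J_0$: the two copies of $T$ lying on $B \times \{0\}$ and $B \times \{1\}$ inherit mirrored orientations once they are glued through $S^2 \times I$ to form the boundary sphere.

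The key geometric observation is that a tubular neighborhood of $T \times I$ in $B \times I$ has product form $\nu(T) \times I$, so
\[
X_{D_2} = (B \setminus \nu(T)) \times I = X_T \times I \simeq X_T,
\]
where $X_T := B \setminus \nu(T)$ is the tangle exterior.  Next I would show that the inclusion $X_T \hookrightarrow X_{J_0}$ is a homotopy equivalence. Writing $X_{T'} := B' \setminus \nu(T')$, we have the decomposition $X_{J_0} = X_T \cup X_{T'}$ glued along $\Sigma := \partial B \setminus \nu(\partial T) \cong S^2 \setminus (2\text{ discs})$. Since $T'$ is unknotted, $X_{T'}$ is a solid torus, and both $X_{T'}$ and $\Sigma$ are homotopy equivalent to $S^1$; the inclusion $\Sigma \hookrightarrow X_{T'}$ is a homotopy equivalence because the generating loop of each represents a meridian of $T'$. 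Therefore $X_{J_0}$ deformation retracts onto $X_T$, so $X_{D_2} \simeq X_T \simeq X_{J_0}$. Since meridians of $D_2$ correspond under these equivalences to meridians of $T$ and thence to meridians of $J_0$, the induced abelianizations to $\Z$ agree, and hence
\[
\A(D_2) = H_1(X_{D_2}; \Z[t^{\pm 1}]) \cong H_1(X_{J_0}; \Z[t^{\pm 1}]) = \A(J_0).
\]

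The main obstacle I anticipate is bookkeeping with conventions. Verifying that the spinning construction yields $J_0 \# -J_0$ (rather than $J_0 \# J_0$, which would not even be slice) requires attention to how $\partial(B \times I) = S^3$ inherits orientations on its two copies of $B$. One also needs to confirm that each of the homotopy equivalences above genuinely respects the abelianization, so that the final identification is one of $\Z[t^{\pm 1}]$-modules and not merely of abelian groups.
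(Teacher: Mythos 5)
Your proof is correct and follows essentially the same route as the paper: identify $X_{D_2}$ with $(\text{tangle exterior}) \times I$ and observe that the tangle exterior has the same Alexander module as the knot exterior. You simply spell out the step $\A(J_0^{\dag}) \cong \A(J_0)$ (via the decomposition $X_{J_0} = X_T \cup_{\Sigma} X_{T'}$ and the deformation retraction of the solid torus $X_{T'}$ onto the annulus $\Sigma$) that the paper asserts in one line.
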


\begin{proof}
 Let $J_0^{\dag}$ be a tangle $D^1 \subseteq D^3$ arising from removing a trivial ball-arc pair $(D^3,D^1)$ from $(S^3,J_0)$. Note that \[\A(J_0^{\dag}) = H_1(D^3\setminus \nu J_0^{\dag}) \cong \A(J_0)\] and \[D^4 \setminus \nu D_2 \cong (D^3 \setminus \nu J_0^{\dag}) \times I \simeq D^3 \setminus \nu J_0^{\dag}.\]
 It follows that $\A(D_2)\cong \A(J_0)$ as claimed.
\end{proof}

Moreover, the map
$i_1 \colon \A(J)\to \A(D_1)$ is given by $(x,y) \mapsto (i_0(x), i_0(y))$ and
the map $i_2 \colon \A(J) \to \A(D_2)$ is given by $(x,y) \mapsto x+y$.

\begin{exl}\label{exl:61again}
One example of such a knot is $J_0= 6_1$.  As noted in Example~\ref{exl:61},  $\A(J_0)=  \Z[t^{\pm1}]/ \langle (2t-1)(t-2) \rangle$, $\A(D_0)=\Z[t^{\pm1}]/ \langle t-2 \rangle$ and the map $i_0 \colon \A(J_0) \to \A(D_0)$  is given by multiplication by $2t-1$.
In particular, we have that
\begin{align*}
\A_{\xi_3}(J_0)/ \ker\left( \A_{\xi_3}(J_0) \to  \A_{\xi_3}(D_0)\right) &\cong \Z[\xi_3]/ \langle (2 \xi_3-1)(\xi_3-2), \xi_3-2 \rangle\\
& \cong \Z_7[x]/ \langle x-2 \rangle \neq 0.
\end{align*}
Here the $\Z_7$ comes from $\xi_3^2 + \xi_3+1=0$, combined with $\xi_3-2=0$.
\end{exl}

Now we prove the following more explicit version of Theorem~\ref{thm:C}.

\begin{thm}\label{thm:subtlerlargedistancedetail}
Let $(R, \eta, \Delta_0)$ be as in Example~\ref{exl:61} and let  $J_0$ be a ribbon knot with  preferred ribbon disc $D_0$ such that
$\A_{\xi_3}(J_0)/ \ker\left( \A_{\xi_3}(J_0) \to  \A_{\xi_3}(D_0)\right)$ is nonzero.  Let $J=J_0 \#-J_0$, $D_1$, and $D_2$ be defined as above.
Then for any $g\geq 0$,
the knot $\displaystyle K:= \#_{i=1}^{4g} R_{\eta}(J)$ has ribbon discs $\Delta_1$, the boundary connected sum of $4g$ copies of $\Delta_{D_{1}}$, and $\Delta_2$, the boundary connected sum of $4g$ copies of $\Delta_{D_2}$,  such that
\[ \ker(\A_{\Q}(K) \to \A_{\Q}(\Delta_1)) \cong \ker(\A_{\Q}(K) \to \A_{\Q}(\Delta_2)).\]
and yet
\[d_2(\Delta_1, \Delta_2) \geq g.\]
\end{thm}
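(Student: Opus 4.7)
The theorem has two separate claims: the equality of the $\Q[t^{\pm 1}]$-kernels, and the lower bound $d_2(\Delta_1,\Delta_2) \geq g$. I would prove them in turn.

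For the equality of kernels, I would apply Proposition~\ref{prop:homologycoverinfection} to each of the $4g$ summands of $K$. The curve $\eta$ chosen in Example~\ref{exl:61} has linking number zero with $R$, so that proposition gives that $\ker(\A(R_\eta(J)) \to \A(\Delta_{D_j}))$ is independent of the choice of slice disc $D_j$ for $J$. Additivity of rational Alexander modules and their kernels under connected sum (via the Mayer--Vietoris argument used in the proof of Proposition~\ref{prop:largedistancetotrivial}) then forces the total kernels for $\Delta_1$ and $\Delta_2$ to agree after tensoring with $\Q$.

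For the distance lower bound, I would adapt the argument of Proposition~\ref{prop:showing-d_2-large} to metabelian twisted homology with coefficients in the PID $\Z[\xi_3]$. Using Example~\ref{exl:61}, fix the surjection $\chi_R \colon H_1(\Sigma_2(R)) \cong \Z_9 \twoheadrightarrow \Z_3$ whose kernel contains $3\Z_9 = \ker(H_1(\Sigma_2(R)) \to H_1(\Sigma_2(D^4,\Delta_0)))$, so that $\phi_{\chi_R}$ extends to $\pi_1(X_{\Delta_0})$. Pulling $\chi_R$ back along the degree-one map $f \colon X_{R_\eta(J)} \to X_R$ from Construction~\ref{cons:satellitedisc} on each of the $4g$ summands and summing yields a character $\chi$ on $H_1(\Sigma_2(K))$ whose associated metabelian representation extends over $\pi_1(X_{\Delta_j})$ for both $j=1,2$, since the extensions originate entirely from the common $R$-side. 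Proposition~\ref{prop:satellitekernel} then decomposes the metabelian kernel on each summand as $P_R \oplus P_{J,D_j}^{1 \oplus \bar 1}$, where $P_{J,D_j} = \ker(\A_{\xi_3}(J) \to \A_{\xi_3}(D_j))$. Writing $A := \A_{\xi_3}(J_0)$ and $N := \ker(A \to \A_{\xi_3}(D_0))$, the splitting $\A_{\xi_3}(J) \cong A \oplus A$ (from $J = J_0 \# -J_0$) together with Lemma~\ref{lem:spun} identifies $P_{J,D_1} \cong N \oplus N$ and $P_{J,D_2} \cong \{(x,-x) : x \in A\}$, with intersection $\cong N$.

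To finish, a Mayer--Vietoris assembly over the $4g$ summands of $K$ (parallel to the Alexander-module additivity in Proposition~\ref{prop:largedistancetotrivial}) produces total kernels $P_1,P_2 \subseteq H_1^{\phi_\chi}(X_K;\Z[\xi_3])$ with
\[
\gr(P_1) - \gr(P_1 \cap P_2) \;=\; 4g \cdot 2\gr N \;=\; 8g,
\]
using $\gr N = 1$ from Example~\ref{exl:61again}. Proposition~\ref{prop:2knotsandtwistedh1} renders 2-sphere additions invisible to these metabelian kernels, and Proposition~\ref{prop:kerneloftwistedH1} applied with $m=2$ bounds $\gr P \leq \gr P_j + 2h$ for the kernel $P$ of any common stabilization via $h$ 1-handle additions. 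Combining with $\gr P \geq \gr P_1 + \gr P_2 - \gr(P_1 \cap P_2)$ from Lemma~\ref{lemma:gen-rank-facts}\eqref{item:gen-rank-ses} yields $2h \geq 8g$, and hence $h \geq g$ as required. The main technical obstacle is establishing the Mayer--Vietoris additivity for the metabelian kernels across the connected-sum decomposition of $K$, together with verifying the compatibility of the single character $\chi$ at all meridional gluings and its simultaneous extension over both $X_{\Delta_1}$ and $X_{\Delta_2}$; unlike the abelian case, nonabelian representations do not split cleanly over connected sums without some care.
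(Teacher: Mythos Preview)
Your treatment of the first claim (equality of the $\Q[t^{\pm 1}]$-kernels) is fine and matches the paper.

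For the distance bound, however, there is a genuine gap. You fix the character $\chi$ in advance, nontrivial on every one of the $4g$ summands, and verify that $\phi_\chi$ extends over $\pi_1(X_{\Delta_1})$ and $\pi_1(X_{\Delta_2})$. But Proposition~\ref{prop:kerneloftwistedH1} requires more: the representation must extend over $\pi_1(X_T)$, where $T$ is the cobordism built from the \emph{specific} sequence of $h$ 1-handle additions taking $\Delta_j'$ to $F$. Since $X_{T_j}$ is obtained from $X_{\Delta_j'} \times I$ by attaching 5-dimensional 2-handles along curves of the form $\mu_1\beta\mu_2^{-1}\beta^{-1}$, and the target group $\Z_2 \ltimes \Z_3$ is nonabelian, there is no reason your fixed $\phi_\chi$ sends these curves to the identity. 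Without extension over $X_T$, you cannot invoke Proposition~\ref{prop:kerneloftwistedH1}, and the comparison of $P_1$, $P_2$, and the kernel $P_F$ for the common stabilized surface collapses.

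This is precisely the obstacle that forces the paper's more elaborate argument. The paper fixes the stabilization first, then shows (its Claim~\ref{claim:1}) that extension over $X_T^2$ amounts to the vanishing of $\overline{\chi \circ \mathfrak f}$ on $2h$ specific elements of $H_1(X_K^2)$ determined by $T$. A dimension count over $\Z_3$ then produces a $\chi$ satisfying these $2h$ linear conditions while keeping at least $4g - 2h \geq 2g$ of the $\chi_i$ nontrivial. Only those $\geq 2g$ summands contribute to the generating-rank estimate, which is why the final inequality is $2h \geq 2g$ rather than your $2h \geq 8g$. Indeed, your conclusion $h \geq 4g$ would pin down $d_2(\Delta_1,\Delta_2) = 4g$ exactly (since the paper notes $d_2 \leq 4g$), a result the authors explicitly say they cannot obtain; this discrepancy is a signal that something has gone wrong. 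A secondary issue: your formula $\gr(P_1) - \gr(P_1 \cap P_2) = 4g \cdot 2\gr N$ treats generating rank as additive over direct sums, which fails over a PID in general (e.g.\ $\Z_2 \oplus \Z_3 \cong \Z_6$); the paper instead bounds $\gr\big(P_2/(P_1 \cap P_2)\big)$ directly.
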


As discussed in Example~\ref{exl:61}, since both $\Delta_{D_1}$ and $\Delta_{D_2}$ are obtained from surgery on a genus 1 Seifert surface for $R_{\eta}(J)$, we know that $d_2(\Delta_{D_1}, \Delta_{D_2}) \leq 1$.
It follows that $d_2(\Delta_1, \Delta_2) \leq 4g$, though we are not able to determine $d_2(\Delta_1, \Delta_2)$ precisely.

\begin{remark}
The proof that  $d_2(\Delta_1, \Delta_2) \geq g$ is somewhat long and involved, so for the reader's convenience we outline the key points in advance:

We suppose that $F$ is a genus $h \leq g$ surface to which both $\Delta_1$ and $\Delta_2$ stabilize by addition of $h$ 1-handles and some number of local 2-knots, in order to show $h=g$.

For $j=1,2$ let $\Delta_j'$ be a disc obtained from $\Delta_j$ by 2-knot addition which stabilizes to $F$ via $h$ 1-handle additions.
 Let $T= T_1 \cup -T_2$ denote the standard cobordism built as in Construction~\ref{cons:standardcobordism}, so $X_T$ is a cobordism from $X_{\Delta_1'}$ through $X_F$  to $X_{\Delta_2'}$.
Our first main argument proving Claim~\ref{claim:1} below shows that there exists a highly nontrivial character on $H_1(\Sigma_2(K))$ giving rise to a representation $\pi_1(X_K) \to \Z_2 \ltimes \Z_3$ that extends over $X_T$ to a map $\Phi$ with certain nice properties.

Just as in the proof of Theorem~\ref{thm:B}, we compare $\ker(H_1^{\Phi}(X_K) \to H_1^{\Phi}(X_{\Delta_1}))$ and $\ker(H_1^{\Phi}(X_K) \to H_1^{\Phi}(X_{\Delta_2}))$.
Essentially by Proposition~\ref{prop:2knotsandtwistedh1} and the careful construction of~$\Phi$, we are able to work with $\ker(H_1^{\Phi}(X_K) \xrightarrow{\iota_1} H_1^{\Phi}(X_{\Delta_1'}))$ and $\ker(H_1^{\Phi}(X_K) \xrightarrow{\iota_2}  H_1^{\Phi}(X_{\Delta_2'}))$ instead.
By the construction of our examples, work before the statement of Theorem~\ref{thm:subtlerlargedistancedetail}, and Proposition~\ref{prop:satellitekernel},  we can show that  $\ker(\iota_2)/ (\ker(\iota_1) \cap \ker(\iota_2))$ has generating rank $x$ at least $2g$. We then  use Proposition~\ref{prop:kerneloftwistedH1} to show that $\ker(\iota_F)$ both contains $\ker(\iota_2)$ and is generated by $\ker(\iota_1)$ together with some other $2h$ elements. It follows that $\ker(\iota_2)/ (\ker(\iota_1) \cap \ker(\iota_2))$  has generating rank $x$ no more than $2h$, and hence $2g \leq x \leq 2h$ so $g \leq h$. We assumed $h \leq g$ so $g=h$ as desired.
\end{remark}

\begin{proof}[Proof of Theorem~\ref{thm:subtlerlargedistancedetail}]
Fix $g \in \mathbb{N}$, and let $K$, $\Delta_1$, and $\Delta_2$ be as above.
Define $N=4g$,  $\xi:=\xi_3$, and recall that for any knot or slice disc $L$ we have $\A_{\xi}(L):=\A(L) \otimes_{\Z[t^{\pm1}]} \Z[\xi]$.
By Proposition~\ref{prop:homologycoverinfection} we have identifications
 \begin{align*}
 \A(K) &\cong \bigoplus_{i=1}^N \A(R_{\eta}(J)) \cong \bigoplus_{i=1}^N \A(R)\\
\text{ and } \A(\Delta_j)& \cong \bigoplus_{i=1}^N \A(\Delta_{D_j}) \cong  \bigoplus_{i=1}^N \A(\Delta_0) \text{ for }j=1,2
 \end{align*}
in such a way that $\ker(\A(K) \to \A(\Delta_1))$ and $\ker(\A(K) \to \A(\Delta_2))$ are both identified with a sum $\bigoplus_{i=1}^N \ker( \A(R) \to \A(\Delta_0))$, and in particular are equal. Since $\A_{\Q}(L) \cong \A(L) \otimes \Q$ for any knot or slice disc $L$, our first conclusion follows.\\

Now suppose that $F$ is a genus $h \leq g$ surface to which both $\Delta_1$ and $\Delta_2$ stabilize by addition of $h$ 1-handles and some number of local 2-knots.
We shall show under these assumptions that $h \geq g$.
As in the proof of Proposition~\ref{prop:showing-d_2-large}, for $j=1,2$ there exist discs $\Delta_j'$ obtained from $\Delta_j$ by connected sum with  local 2-knots such that $F$ is obtained from $\Delta_j'$ by $h$ 1-handle additions. For $j=1,2$ we write $\Delta_j'= \Delta_j \# S_j$ for some local 2-knot $S_j$.

Note that $f \colon X_{R_{\eta}(J)} \to X_R$ lifts to give a degree one map $X_{R_{\eta}(J)}^2 \to X_R^2$, which extends to give
$\widebar{f}\colon \Sigma_2(R_{\eta}(J)) \to \Sigma_2(R)$.
Moreover, Proposition~\ref{prop:homologycoverinfection} implies that $\widebar{f}$ induces an isomorphism on first homology.
So we obtain an isomorphism
\[\mathfrak{f} \colon H_1(\Sigma_2(K)) \cong \oplus_{i=1}^N H_1(\Sigma_2(R_{\eta}(J)))\xrightarrow{\oplus_{i=1}^N \widebar{f}_*} \oplus_{i=1}^N H_1(\Sigma_2(R)) \cong H_1(\Sigma_2(R_N))
\]
where we let $R_N$ denote the connected sum of $N$ copies of $R$.

Let $T_1$ and $T_2$ be appropriate unions of the simple cobordisms built
 in Construction~\ref{cons:standardcobordism}, such that $X_{T_1}$ is a cobordism from $X_{\Delta_1'}$ to $X_F$ rel.\ $X_K$ and $X_{T_2}$ is a cobordism from $X_{\Delta_2'}$ to $X_F$ rel.\ $X_K$. We let  $X_T:=X_{T_1} \cup_{X_F} - X_{T_2}$.

\begin{claim}\label{claim:1}
There exists a map
\[\chi =(\chi_i)_{i=1}^N \colon \oplus_{i=1}^N H_1(\Sigma_2(R)) \to \Z_3\]
 with at least $2g$ of the $\chi_i$ nonzero  such that
 $\phi_{\chi \circ \mathfrak{f}} \colon \pi_1(X_K) \to \Z_2 \ltimes \Z_3$ extends over $\pi_1(X_T)$ to a map $\Phi \colon \pi_1(X_T) \to \Z_2 \ltimes \Z_3$ and for $j=1,2$ the composition
\[\pi_1(X_{S_j}) \to \pi_1(X_{\Delta_j}) *_{\Z} \pi_1(X_{S_j}) \cong \pi_1(X_{\Delta_j'}) \to \pi_1(X_T) \xrightarrow{\Phi} \Z_2 \ltimes \Z_3
\]
is given by $\gamma \mapsto ([\varepsilon(\gamma)], 0)$.
  \end{claim}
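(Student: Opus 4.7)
The plan is to translate the existence of $\Phi$ into a linear system over $\Z_3$ and resolve it by a dimension count. Identifying $H_1(\Sigma_2(R))\cong\Z_9$ as in Example~\ref{exl:61}, every character $\chi=(\chi_i)_{i=1}^N\in\Hom\bigl(\bigoplus_{i=1}^N\Z_9,\Z_3\bigr)$ corresponds to a vector in $\Z_3^N$, and ``at least $2g$ of the $\chi_i$ nonzero'' is the Hamming-weight bound $|\chi|\geq 2g$.

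First I would show that $\phi_{\chi\circ\mathfrak{f}}$ always extends over $\pi_1(X_{\Delta_j'})$ in the required form, for every $\chi\in\Z_3^N$. By Proposition~\ref{prop:2knotsandtwistedh1}, connect-summing with the local 2-knot $S_j$ does not alter the inclusion-induced kernel on $H_1$ of the double branched cover, and by Seifert--van Kampen $\pi_1(X_{\Delta_j'})\cong\pi_1(X_{\Delta_j})*_{\Z}\pi_1(X_{S_j})$ the extension is determined by its restrictions to the two factors. Prescribing the $\pi_1(X_{S_j})$-factor to be $\gamma\mapsto([\varepsilon(\gamma)],0)$ is consistent at the amalgamated meridian~$\mu$, which maps to $([1],0)$ from both sides. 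Extension over $\pi_1(X_{\Delta_j})$ amounts to $\chi\circ\mathfrak{f}$ vanishing on $\ker\bigl(H_1(\Sigma_2(K))\to H_1(\Sigma_2(D^4,\Delta_j))\bigr)$; reducing Proposition~\ref{prop:homologycoverinfection} modulo $t+1$ (the relevant $\Tor$ vanishes because $\A(\Delta_0)\cong\Z[t^{\pm1}]/\langle 2t-1\rangle$ is $(t+1)$-torsion free) and summing over the connected-sum decomposition of $K$ and the boundary-connected-sum decomposition of $\Delta_j$, this kernel is identified under $\mathfrak{f}$ with $\bigoplus_{i=1}^N 3\Z_9\subseteq\bigoplus_{i=1}^N\Z_9$. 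Since every homomorphism $\chi_i\colon\Z_9\to\Z_3$ annihilates $3\Z_9$ automatically, this obstruction vanishes identically.

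Next I would analyze extension across the cobordism. By Construction~\ref{cons:standardcobordism}, $X_{T_j}$ is $X_{\Delta_j'}\times I$ with $h$ five-dimensional 2-handles attached along commutators $\gamma_1^{(j)},\dots,\gamma_h^{(j)}$, or dually $X_F\times I$ with $h$ five-dimensional 3-handles attached. Since 5-dimensional 3-handles have simply-connected attaching sphere $S^2$, they do not affect $\pi_1$, so $\pi_1(X_{T_j})\cong\pi_1(X_F)$ and Seifert--van Kampen yields $\pi_1(X_T)\cong\pi_1(X_{T_1})*_{\pi_1(X_F)}\pi_1(X_{T_2})\cong\pi_1(X_F)$. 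Consequently the two proposed extensions automatically agree on $\pi_1(X_F)$, since both come from the unique extension along the surjection $\pi_1(X_K)\twoheadrightarrow\pi_1(X_F)$. Each commutator $\gamma_i^{(j)}$ lifts to a class in $H_1(\Sigma_2(D^4,\Delta_j'))\cong\Z_3^N\oplus H_1(\Sigma_2(S^4,S_j))$, and because the extension of $\chi\circ\mathfrak{f}$ has been arranged to kill the 2-knot summand, the vanishing condition on $[\gamma_i^{(j)}]$ amounts to a single $\Z_3$-linear equation on $\chi\in\Z_3^N$.

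Collecting, we have at most $2h$ linear constraints on $\chi\in\Z_3^N$, so the solution space $V$ has $\dim_{\Z_3}V\geq N-2h=4g-2h\geq 2g$. Putting a basis of $V$ in row-echelon form exhibits a set $S$ of $|S|=\dim V\geq 2g$ pivot coordinates such that the projection $V\to\Z_3^S$ is surjective; choosing $\chi\in V$ whose $S$-coordinates are all nonzero produces a vector with at least $2g$ nonzero entries, completing the proof. The main technical obstacle I expect is the first step: promoting Proposition~\ref{prop:homologycoverinfection} from the Alexander module to $H_1(\Sigma_2)$ via a $\Tor$-vanishing check, and verifying the compatible direct-sum decompositions of $H_1$ of the various double branched covers under the (boundary-)connected-sum operations; once these are in place, the remainder is essentially bookkeeping.
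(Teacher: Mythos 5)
Your overall strategy mirrors the paper's: set up the existence of $\Phi$ as a linear system over $\Z_3$ (conditions for a character $\chi\in\Z_3^N$ to extend), bound the number of constraints by $2h$, and conclude with the dimension count $N-2h\geq 2g$. Two of your local choices are genuine variants: for the extension over $\pi_1(X_{\Delta_j})$ you observe that any $\Z_3$-valued character on $\Z_9^N$ automatically annihilates $\bigoplus 3\Z_9$, whereas the paper invokes the ribbon property to control the target; and for the conditions coming from the cobordism you parameterize them by the homology classes of lifted $2$-handle attaching circles $\gamma^{(j)}_i$, whereas the paper reaches the same count via Proposition~\ref{prop:kerneloftwistedH1} together with the Shapiro identification $H_1^e(X)\cong H_1(X^2)$. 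The latter is more than bookkeeping: it is what establishes cleanly that the kernel $\ker\bigl(H_1(X_K^2)\to H_1(X_T^2)\bigr)$ is generated by the already-killed submodule $\ker\bigl(H_1(X_K^2)\to H_1(X_{\Delta_1'}^2)\bigr)$ together with exactly $2h$ extra elements.

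The genuine gap is the sentence asserting that ``the two proposed extensions automatically agree on $\pi_1(X_F)$, since both come from the unique extension along the surjection $\pi_1(X_K)\twoheadrightarrow\pi_1(X_F)$.'' That surjectivity is false in general: the inclusion $\pi_1(X_K)\to\pi_1(X_F)$ only normally generates, and as soon as $F$ involves connected sum with a nontrivial $2$-knot $S_j$ the free factor $\pi_1(X_{S_j})$ sits in $\pi_1(X_{\Delta_j'})$ (hence, after collapsing the $\gamma^{(j)}_i$, potentially in $\pi_1(X_F)$) but not in the image of $\pi_1(X_K)$. Indeed, in the degenerate case $h=0$, $\pi_1(X_F)=\pi_1(X_{\Delta_1'})\cong\pi_1(X_{\Delta_1})\ast_{\Z}\pi_1(X_{S_1})$ and the image of $\pi_1(X_K)$ is only $\pi_1(X_{\Delta_1})$. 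So you cannot use surjectivity of $\pi_1(X_K)\to\pi_1(X_F)$ to conclude the extensions built separately on $\pi_1(X_{\Delta_1'})$ and $\pi_1(X_{\Delta_2'})$ coincide on $\pi_1(X_F)$; without that, your $\Phi$ is not a priori well-defined on $\pi_1(X_T)$, and in particular it is unclear that a map built from the $\Delta_1'$ side is trivial on $\pi_1(X_{S_2})$ as the Claim requires. The paper sidesteps this by never constructing two competing extensions: it builds a single character on the double cover of $Y=X_{\Delta_1'}\cup(X_K\times I)\cup X_{\Delta_2'}$ (compatibility over $X_K$ is built in), observes that $H_1(Y^2)\to H_1(X_T^2)$ is surjective so the extension over $X_T^2$ is determined once it exists, and identifies the obstruction to that single extension with vanishing on $\ker\bigl(H_1(X_{\Delta_1}^2)\to H_1(X_T^2)\bigr)$, which Proposition~\ref{prop:kerneloftwistedH1} bounds by $2h$ extra generators. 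If you want to keep your handle-by-handle description of the constraints, you should likewise define the extension on $Y$ first and then record the $2h$ vanishing conditions on $\ker\bigl(H_1(Y^2)\to H_1(X_T^2)\bigr)$, rather than appealing to a surjection $\pi_1(X_K)\to\pi_1(X_F)$ that does not exist.
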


We will always construct our extensions in stages, first extending over
\[Y= X_{\Delta_1'} \cup (X_K \times I) \cup X_{\Delta_2'}\]
and then extending over the rest of $X_T$.

Note that $H_1(\Sigma_2(R)) \cong \Z_9$ and that it follows from Proposition~\ref{prop:homologycoverinfection} that
\begin{align}\label{eqn:branchedcoverhomology}
\ker\big(H_1(\Sigma_2(K)) \to H_1(\Sigma_2(D^4, \Delta_j))\big)& \cong \ker\Big(\bigoplus_{i=1}^N H_1(\Sigma_2(R)) \to \bigoplus_{i=1}^N H_1(\Sigma_2(D^4, \Delta_0))\Big) \\ &\cong\bigoplus_{i=1}^N 3 \Z_9.
\end{align}

It follows that for $j=1,2$ and for any character $\chi\colon H_1(\Sigma_2(R_N)) \to \Z_3$ we have that $\chi \circ \mathfrak{f}$ extends to a map $\chi_j$ on $H_1(\Sigma_2(D^4, \Delta_j))$, up to a priori extending its range to $\Z_{3^a}$ for some $a\geq 1$. However, since our slice discs $\Delta_j$ are in fact ribbon discs, the inclusion induced map $\pi_1(X_K) \to \pi_1(X_{\Delta_j})$ is surjective for $j=1,2$. So we can take $a=1$.
%
%

Note that any map $\chi \circ \mathfrak{f}  \colon H_1(\Sigma_2(K)) \to \Z_3$ induces
$\widebar{\chi \circ \mathfrak{f}} \colon H_1(X_K^2) \to \Z_3$ by precomposition with the natural inclusion induced map $H_1(X_K^2) \to H_1(\Sigma_2(K))$.
Since inclusion induces isomorphisms of $H_1(X_K)$ with $H_1(X_T)$, in order to show that a given $\phi_{\chi \circ \mathfrak{f}}$ extends over $\pi_1(X_T)$ it suffices to extend the corresponding $\widebar{\chi \circ \mathfrak{f}}$
first over $\pi_1(X_{\Delta_1'}^2 \cup (X_K^2 \times I) \cup X_{\Delta_2'}^2)$ and then
over $\pi_1(X_T^2)$.

Now, consider the Mayer-Vietoris sequence for $X_{\Delta_1'}^2 \cup (X_K^2 \times I) \cup X_{\Delta_2'}^2$, which we note is diffeomorphic to $X_{\Delta_1'}^2 \cup_{X_K^2} X_{\Delta_2'}^2$:
\[
H_1(X_K^2) \xrightarrow{i_1' \oplus i_2'} H_1(X_{\Delta_1'}^2 ) \oplus H_1(X_{\Delta_2'}^2) \xrightarrow{j_1 \oplus j_2} H_1( X_{\Delta_1'}^2 \cup_{X_K^2} X_{\Delta_2'}^2) \to 0.
\]

For $j=1,2$ we have that  $H_1(X_{\Delta_j'}^2) \cong H_1(X_{\Delta_j}^2) \oplus H_1(\Sigma_2(S^4, S_j))$ in such a way that $i_j' \colon H_1(X_K^2) \to H_1(X_{\Delta_j'}^2)$ is given by $i_j \,\oplus\, 0$, where $i_j \colon H_1(X_K^2) \to H_1(X_{\Delta_j}^2)$ is the inclusion-induced map.
We therefore obtain, recalling that  the map $H_1(X_K^2) \to H_1(X_{\Delta_j}^2)$ is surjective since $\Delta_j$ is a ribbon disc, that
\[ H_1( X_{\Delta_1'}^2 \cup_{X_K^2} X_{\Delta_2'}^2) \cong
H_1(X_{\Delta_1}^2) \oplus H_1(\Sigma_2(S^4, S_1)) \oplus H_1(\Sigma_2(S^4, S_2)).
\]

Therefore any $\widebar{\chi \circ \mathfrak{f}}$  can be extended over
\begin{align*}
X_{\Delta_1'}^2 \cup (X_K^2 \times I) \cup X_{\Delta_2'}^2
&=(X_{\Delta_1}^2 \cup X_{S_1}^2) \cup (X_K^2 \times I) \cup (X_{\Delta_2}^2 \cup X_{S_2}^2) \subset \partial X_{T}^2
\end{align*}
so that the extension is trivial on the $H_1(\Sigma_2(S^4, S_1)) \oplus H_1(\Sigma_2(S^4, S_2))$-summand.
Moreover, such a map extends over $H_1(X_T^2)$ if and only if it vanishes on
\[H:= \ker \big(
H_1(X_{\Delta_1'}^2 \cup (X_K^2 \times I) \cup X_{\Delta_2'}^2)
\to H_1(X_T^2)
\big).
\]
Note that our maps $\widebar{\chi \circ \mathfrak{f}}$ have been chosen to vanish on $H_1(\Sigma_2(S^4, S_1)) \oplus H_1(\Sigma_2(S^4, S_2))$, and hence vanish on $H$ if and only if they vanish on
\[H \cap H_1(X_{\Delta_1}^2)= \ker \left( H_1(X_{\Delta_1}^2) \to H_1(X_T^2) \right).\]
Moreover, $\ker \left( H_1(X_{\Delta_1}^2) \to H_1(X_T^2) \right)$ is isomorphic to a quotient of $\ker(H_1(X_K^2) \to H_1(X_T^2))$.

For a space $X$ with surjection $\varepsilon \colon H_1(X) \to \Z$, we consider the map
\begin{align*}
  e=e_X \colon \pi_1(X) &\to \GL_2(\Z) \\ \gamma &\mapsto \left[ \begin{array}{cc}0& 1 \\ 1 & 0 \end{array}\right]^{\varepsilon(\gamma)}.
\end{align*}
Note that the $e_X$ maps for $X= X_K, X_{\Delta_j'}, X_{F}, X_T$ are compatible, since inclusion $X_K \hookrightarrow X_*$ induces an isomorphism on first homology.
The proof of Proposition~\ref{prop:kerneloftwistedH1} implies that
\[ \ker(H_1^e(X_K) \to H_1^e(X_{T_1})) \cong
\ker(H_1^e(X_K) \to H_1^e(X_F)) \cong \ker(H_1^e(X_K) \to H_1^e( X_{T_2})).
\]
Proposition~\ref{prop:kerneloftwistedH1} also tells us that this kernel is generated by $\ker(H_1^e(X_K) \to H_1^e(X_{\Delta_1'}))$ along with some $2h$ elements $\{x_k\}_{k=1}^{2h} \subseteq H_1^e(X_K)$.

By the topologists' Shapiro lemma~\cite[p.~100]{Davis-Kirk}, there is a canonical identification $H_1^e(X) \cong H_1(X^2)$ for all $X$, and so
\[\ker(H_1(X_K^2) \to H_1(X_{T_1}^2)) \cong \ker(H_1(X_K^2) \to H_1(X_F^2)) \cong \ker(H_1(X_K^2) \to H_1( X_{T_2}^2)) \]
and this kernel is generated by $\ker(H_1(X_K^2) \to H_1(X_{\Delta_1'}^2))$ along with some $2h$ elements $\{x_k\}_{k=1}^{2h} \subseteq H_1(X_K^2)$.

Therefore, since every map $H_1(X_K^2) \to \Z_3$ extends over $H_1(X_{\Delta_1'}^2 \cup_{X_K^2} X_{\Delta_2'}^2)$ in our prescribed fashion, in order to ensure that $\widebar{\chi \circ \mathfrak{f}}$ extends over $H_1(X_T^2)$
it is enough to have $(\widebar{\chi\circ \mathfrak{f}})(x_k)=0$ for all $k=1, \dots, 2h$.
It follows from Equation~(\ref{eqn:branchedcoverhomology}) that $\Hom(H_1(\Sigma_2(K)), \Z_3) \cong \Z_3^N$. Using our assumption that $h \leq g$, we have
\[N-2h=(4g)-2h \geq (4g)-2g \geq 2g.\]

A linear algebraic argument as in the proof of~\cite[Theorem~6.1]{KimLivingston:2005} shows that if $A$ is an abelian group with $\Hom(A, \mathbb{F}) \cong \mathbb{F}^N$ then, given any $m$ elements $a_1, \dots, a_m \in A$ there exists a character $\chi= (\chi_i)_{i=1}^N\in \Hom(A, \mathbb{F})$ such that $\chi(a_j)=0$ for all $j=1, \dots, m$ and such that at least $N-m$ of the $\chi_i$ maps are nonzero.
It therefore follows that there exists some $\chi= (\chi_i)_{i=1}^N$ such that $\chi \circ \mathfrak{f}$ vanishes on $\{x_1, \dots, x_{2h}\}$ and at least $N-2h \geq 2g$ of the $\chi_i$ are nonzero. This completes the proof of Claim~\ref{claim:1}.
\\

Let $\chi=(\chi_i)_{i=1}^N$ be such a map. By reordering the summands, without loss of generality we may assume that $\chi_1, \dots, \chi_m$ are nonzero for some $m \geq 2g$ and that $\chi_{m+1}, \dots, \chi_N$ are zero.
Let $\phi:=\phi_{\chi \circ \mathfrak{f}}$
and let $\Phi \colon \pi_1(X_T) \to \Z_2 \ltimes \Z_{3}$ be the corresponding extension of $\phi$ over $\pi_1(X_T)$.

Observe that $X_K$ is the union of $N$ copies of $X_{R_\eta(J)}$, glued along $(N-1)$ copies of~$S^1 \times I$, and that, for $j=1,2$, $X_{\Delta_j'}$ is the union of $N$ copies of $X_{\Delta_{D_j}}$, glued along $(N-1)$ copies of $S^1 \times I \times I$, along with a single copy of $X_{S_j}$ glued along $S^1 \times D^2$ away from all the other identifications.  These decompositions are compatible.

Let $\phi_i$ denote the restriction of $\phi$ to the fundamental group of the $i$th copy of $X_{R_{\eta}(J)}$ and respectively let $\Phi_i$ denote the restriction of $\Phi$ to the $i$th copy of $\pi_1(X_{\Delta_{D_j}})$. Recall that there are some choices of basepoints and paths implicit here -- see the note at the end of Construction~\ref{cons:standardcobordism}.
 It is then straightforward to argue that our maps are related by the following commutative diagram, where unlabeled arrows are induced by inclusion and $\Phi_{\chi_i}$ denotes the unique extension of $\phi_{\chi_i}$ to $\pi_1(X_{\Delta_{0}})$:
\[
\begin{tikzcd}
\pi_1(X_{R_{\eta}(J)}) \arrow{d} \arrow[swap, "f_*"]{r} \arrow[bend left=20, "\phi_i"]{rr} & \pi_1(X_R) \arrow{d} \arrow[swap, "\phi_{\chi_i}"]{r} & \Z_2 \ltimes \Z_3 \arrow["="]{d} \\
\pi_1(X_{\Delta_{D_j}}) \arrow[bend right=20, swap, "\Phi_i"]{rr}\arrow["g_*"]{r} & \pi_1(X_{\Delta_0}) \arrow["\Phi_{\chi_i}"]{r}& \Z_2 \ltimes \Z_3.
\end{tikzcd}
\]


For $1 \leq i \leq m$, the map $\chi_i$ is nontrivial and so Proposition~\ref{prop:satellitekernel} implies that
\[H_1^{\phi_i}(X_{R_\eta(J)}) \cong H_1^{\phi_{\chi_i}}(X_{R})\oplus \A_{\xi}(J)^{1 \oplus \bar{1}} \text{
and }
H_1^{\Phi_i}(X_{\Delta_{D_j}}) \cong H_1^{\Phi_{\chi_i}}(X_{\Delta_0})\oplus \A_{\xi}(D_j)^{1 \oplus \bar{1}} \]
in such a way that
$\ker(H_1^{\phi_i}(X_{R_\eta(J)})  \to H_1^{\Phi_i}(X_{\Delta_{D_j}}))$ is identified with
\[\ker\left(H_1^{\phi_{\chi_i}}(X_{R}) \to  H_1^{\Phi_{\chi_i}}(X_{\Delta_0})\right) \oplus \ker (\A_{\xi}(J) \to \A_{\xi}(D_j))^{1 \oplus \bar{1}}.
\]

Now consider a portion of the Mayer-Vietoris sequences in twisted homology for $X_{K}= \cup_{i=1}^N X_{R_{\eta}(J)}$ and
$X_{\Delta_j}= \cup_{i=1}^N X_{\Delta_{D_j}}$ for $j=1, 2$:
\[
\begin{tikzcd}
\oplus_{i=1}^{N-1} H_1^{\phi_{i}}(S^1 \times I)  \arrow{d}{\Id} \arrow{r}{u} &\oplus_{i=1}^N H_1^{\phi_{i}}(X_{R_{\eta}(J)})  \arrow{d}{\oplus_{i=1}^n \iota^i_j} \arrow{r}{v} &H_1^{\phi}(X_K) \arrow{d}{\iota_j}  \\
\oplus_{i=1}^{N-1}H_1^{\Phi_i} (S^1 \times I \times I)   \arrow{r}{U_j} &\oplus_{i=1}^N H_1^{\Phi_i}(X_{\Delta_{D_j}})
  \arrow{r}{V_j} &H_1^{\Phi}(X_{\Delta_j}).
\end{tikzcd}
\]
In the above diagram, by a mild abuse of notation we refer to the restriction of $\phi_i$ to $\pi_1(S^1 \times I)$  as just $\phi_i$, and similarly for $\Phi_i|_{\pi_1(S^1 \times I \times I)}$.
\\

We wish to show that $\ker(\iota_2)/ (\ker(\iota_1) \cap \ker(\iota_2))$ has generating rank at least $2g$. In order to do this, we focus on a submodule $Q$ of $\oplus_{i=1}^N H_1^{\phi_{i}}(X_{R_{\eta}(J)})$ and analyze how $v(Q)$ intersects $\ker(\iota_1)$ and $\ker(\iota_2)$.

\begin{claim}\label{claim:2}
The module $Q:= \oplus_{i=1}^m \A_{\xi}(J)^{1 \oplus \bar{1}}\subset \oplus_{i=1}^N H_1^{\phi_{i}}(X_{R_{\eta}(J)})$ is carried isomorphically by $v$  to a subgroup of  $H_1^{\phi}(X_K)$ such that for $q \in Q$ we have that $v(q) \in \ker(\iota_j)$ if and only if $q \in \ker\big(\oplus_{i=1}^N \iota^i_j \big)$.
\end{claim}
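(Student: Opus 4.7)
The plan is to exploit the direct sum decompositions of $H_1^{\phi_i}(X_{R_\eta(J)})$ and $H_1^{\Phi_i}(X_{\Delta_{D_j}})$ from Proposition~\ref{prop:satellitekernel}, combined with the geometric observation that all of the annuli along which we glue to form $X_K = \cup_{i=1}^N X_{R_\eta(J)}$ and $X_{\Delta_j} = \cup_{i=1}^N X_{\Delta_{D_j}}$ are thickened meridians of $K$, and in the satellite decompositions these meridians lie entirely in the $X_R$ and $X_{\Delta_0}$ factors, disjoint from the $X_J$ and $X_{D_j}$ factors.

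First I would show that $v|_Q$ is injective. By exactness of the Mayer--Vietoris sequence in the top row, this reduces to checking that $Q \cap \Imm(u) = 0$ inside $\oplus_{i=1}^N H_1^{\phi_i}(X_{R_\eta(J)})$. For each $i$, the adjacent gluing annuli $S^1 \times I$ sit inside $\partial X_{R_\eta(J)}$ as neighborhoods of a meridian of $K$. In the satellite decomposition $X_{R_\eta(J)} = (X_R \ssm \nu(\eta)) \cup_{T^2} X_J$ of Construction~\ref{cons:satellitedisc}, such a meridian lies in the $X_R \ssm \nu(\eta)$ factor. Tracing through the Mayer--Vietoris isomorphism $\Phi$ constructed in the proof of Proposition~\ref{prop:satellitekernel}, its image in $H_1^{\phi_i}(X_{R_\eta(J)})$ factors through $\pi_\eta$ and therefore lies entirely in the $H_1^{\phi_{\chi_i}}(X_R)$ summand (for $1 \le i \le m$; for $i > m$ the $i$-th coordinate of $Q$ is zero, so nothing is required). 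Since $Q$ projects trivially onto each $H_1^{\phi_{\chi_i}}(X_R)$ summand, we obtain $Q \cap \Imm(u) = 0$.

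For the biconditional, the $(\Leftarrow)$ direction is immediate from commutativity of the Mayer--Vietoris square: if $(\oplus_{i=1}^N \iota_j^i)(q) = 0$ then $\iota_j(v(q)) = V_j((\oplus_{i=1}^N \iota_j^i)(q)) = 0$. For $(\Rightarrow)$, suppose $\iota_j(v(q)) = 0$. Then by commutativity $V_j((\oplus_{i=1}^N \iota_j^i)(q)) = 0$, so by exactness of the bottom row $(\oplus_{i=1}^N \iota_j^i)(q) \in \Imm(U_j)$. Repeating the meridional argument above with $X_{\Delta_j}$ in place of $X_K$ and the decomposition $X_{\Delta_{D_j}} = X_{\Delta_0} \cup_{S^1 \times D^2} X_{D_j}$ from Construction~\ref{cons:satellitedisc}, I obtain $\Imm(U_j) \subseteq \oplus_{i=1}^N H_1^{\Phi_{\chi_i}}(X_{\Delta_0})$. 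On the other hand, naturality of the Mayer--Vietoris decomposition of Proposition~\ref{prop:satellitekernel} under the inclusion $X_{R_\eta(J)} \hookrightarrow X_{\Delta_{D_j}}$ and the compatible degree one maps $f, g$ of Construction~\ref{cons:satellitedisc} forces $\iota_j^i$ to carry the $\A_\xi(J)^{1 \oplus \bar 1}$ summand into the $\A_\xi(D_j)^{1 \oplus \bar 1}$ summand. Since $q \in Q$ is supported in the $\A_\xi(J)^{1 \oplus \bar 1}$ summands, $(\oplus_{i=1}^N \iota_j^i)(q)$ is supported in the $\A_\xi(D_j)^{1 \oplus \bar 1}$ summands, which are complementary to the $H_1^{\Phi_{\chi_i}}(X_{\Delta_0})$ summands. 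Hence $(\oplus_{i=1}^N \iota_j^i)(q) = 0$.

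The main point requiring care is the naturality assertion in the last paragraph, which should be verified by placing the Mayer--Vietoris diagrams for $X_{R_\eta(J)}$ and $X_{\Delta_{D_j}}$ side by side and using that the gluing tori $T^2 \subset \partial X_{R_\eta(J)}$ and $S^1 \times D^2 \subset \partial X_{\Delta_{D_j}}$ are compatible under inclusion; everything else is a direct consequence of exactness together with the complementary-summand observations about where meridional classes live.
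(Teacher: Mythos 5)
Your proof is correct and follows essentially the same route as the paper's: you use the direct sum decomposition $H_1^{\phi_i}(X_{R_\eta(J)}) \cong H_1^{\phi_{\chi_i}}(X_R) \oplus \A_\xi(J)^{1\oplus\bar 1}$ and $H_1^{\Phi_i}(X_{\Delta_{D_j}}) \cong H_1^{\Phi_{\chi_i}}(X_{\Delta_0}) \oplus \A_\xi(D_j)^{1\oplus\bar 1}$ from Proposition~\ref{prop:satellitekernel}, observe that the thickened meridians along which $X_K$ and $X_{\Delta_j}$ are assembled live in the $X_R$ (resp.\ $X_{\Delta_0}$) pieces so that $\Imm(u)$ and $\Imm(U_j)$ miss the $\A_\xi(J)^{1\oplus\bar 1}$ (resp.\ $\A_\xi(D_j)^{1\oplus\bar 1}$) summands, and then conclude the biconditional by noting that $\iota_j^i$ respects the splittings and that an element lying in both $\oplus\A_\xi(D_j)^{1\oplus\bar 1}$ and $\Imm(U_j)$ must vanish. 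The paper phrases the final step through an auxiliary element $x - u(y)$ before invoking the complementary-summand argument, but this is cosmetic; the substance of the two proofs is identical.
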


First, use Proposition~\ref{prop:kerneloftwistedH1} to  decompose
\[\bigoplus_{i=1}^N H_1^{\phi_{i}}(X_{R_{\eta}(J)}) \cong \bigoplus_{i=1}^m \big( H_1^{\phi_{\chi_i}}(X_R) \oplus  \A_{\xi}(J)^{1 \oplus \bar{1}}\big) \oplus \bigoplus_{i=m+1}^N H_1^{\phi_{i}}(X_{R_{\eta}(J)}).
\]
We can then observe that since
\[(S^1 \times I)_i \subset (X_R)_i \cap (X_R)_{i+1} \subset (X_{R_{\eta}(J)})_i \cap  (X_{R_{\eta}(J)})_{i+1}\]
we have
\[ \ker(v)=\Imm(u) \subseteq \bigoplus_{i=1}^m  H_1^{\phi_{\chi_i}}(X_R) \oplus \bigoplus_{i=m+1}^N H_1^{\phi_{i}}(X_{R_{\eta}(J)}).\]
Similarly, we have that
\[ \ker(V_j)= \Imm(U_j) \subseteq \bigoplus_{i=1}^m  H_1^{\Phi_{\chi_i}}(X_{\Delta_0}) \oplus \bigoplus_{i=m+1}^N H_1^{\Phi_i}(X_{\Delta_{D_j}}).
\]
That is, $\ker(v)$ and $\ker(V_j)$ respectively intersect the $\A_{\xi}(J)^{1 \oplus \bar{1}}$ and $\A_{\xi}(D_j)^{1 \oplus \bar{1}}$ summands trivially.

In order to show that $\iota^i_j(x)=0$ if and only if $\iota_j(v(x))=0$, suppose that $x$ is an element of the $i$th copy of $ \A_{\xi}(J)^{1 \oplus \bar{1}}$ for some $1 \leq i \leq m$.
One direction follows immediately from the commutativity of our diagram: if $\iota^i_j(x)=0$, then
$\iota_j(v(x))= V_j(\iota^i_j(x))= V_j(0)=0.$
So suppose now that $\iota_j(v(x))=0$. It follows that $\iota^i_j(x) \in \ker(V_j)= \Imm(U_j)$, and so there exists $y \in \oplus_{i=1}^{n-1} H_1(S^1)$ such that $U_j(y)= \iota^i_j(x)$. Observe that $\iota^i_j(x-u(y))= \iota^i_j(x)-U_j(y)=0$, so $x-u(y) \in \ker(\iota^i_j)$.
However, since
\[ \iota^i_j(x)\in \bigoplus_{i=1}^m \A_{\xi}(D_j)^{1 \oplus \bar{1}}\] and
\[\iota^i_j(u(y))= U_j(y) \in  \Imm(U_j) \subseteq \bigoplus_{i=1}^m  H_1^{\Phi_{\chi_i}}(X_{\Delta_0}) \oplus \bigoplus_{i=m+1}^N H_1^{\Phi_i}(X_{\Delta_{D_j}})\]
 we must have $\iota^i_j(x)=0=U_j(y)$, as desired.  This completes the proof of Claim~\ref{claim:2}.
\vspace{.5cm}

For $j=1,2$ we have by Claim~\ref{claim:2} that
\begin{align}\label{eqn:pjsplit}
P_j:=v(Q) \cap \ker(\iota_j) \cong Q \cap v^{-1}(\ker(\iota_j)) =Q \cap \bigoplus_{i=1}^m   \ker(\iota_j^i).
\end{align}
We  now argue  that the subset $P_2/ (\ker(\iota_1) \cap P_2)$ of $\ker(\iota_2)/ (\ker(\iota_1) \cap \ker(\iota_2))$
has generating rank at least $2g$,  noting that  by Lemma~\ref{lemma:gen-rank-facts}~(\ref{item:gen-rank-subgroup}) this implies as desired that $\ker(\iota_2)/ (\ker(\iota_1) \cap \ker(\iota_2))$ has generating rank at least $2g$.

By the splitting of the kernel from Proposition~\ref{prop:satellitekernel} we have that
\begin{align}\label{eqn:pjsplit2}
Q \cap \bigoplus_{i=1}^m   \ker(\iota_j^i)=  \bigoplus_{i=1}^m \A_{\xi}(J)^{1 \oplus \bar{1}}\, \cap \, \bigoplus_{i=1}^m   \ker(\iota_j^i)
= \bigoplus_{i=1}^m  \ker \left(
\A_{\xi}(J)^{1 \oplus \bar{1}} \to \A_{\xi}(D_j)^{1 \oplus \bar{1}} \right).
\end{align}
From our computations of the maps $\A_{\xi}(J) \to \A_{\xi}(D_j)$ {before the statement of Theorem~\ref{thm:subtlerlargedistancedetail}}, we also have
\begin{align}\label{eqn:pjcomp}
 \ker\left(\A_{\xi}(J)^{1 \oplus \bar{1}}  \to \A_{\xi}(D_j)^{1 \oplus \bar{1}} \right) =
 \begin{cases}
   \ker(\iota_0^{\xi} \colon \A_{\xi}(J_0) \to \A_{\xi}(D_0))^{1 \oplus \bar{1}}&  j=1\\
\{ (x,-x) \mid x \in \A_{\xi}(J_0)\} &  j=2.
\end{cases}
\end{align}
Observe that by Claim~\ref{claim:2}  together with Equations~(\ref{eqn:pjsplit}) and~(\ref{eqn:pjcomp}) we have
 \begin{align*}
P_2/ \left( \ker(\iota_1) \cap P_2 \right)&= P_2 / \left(\ker(\iota_1) \cap v(Q) \cap \ker(\iota_2) \right) \\
&=  P_2/ \left( P_2 \cap P_1 \right)\\
& \cong \bigoplus_{i=1}^m \{ (x,-x) \mid x \in \A_{\xi}(J_0)\}/ \bigoplus_{i=1}^m \big\{ (x,-x)\mid x \in \ker(\iota_0^{\xi}) \big\} \\
&\cong \bigoplus_{i=1}^m \A_{\xi}(J_0)/ \ker(\iota_0^{\xi}).
\end{align*}
Since  $\A_{\xi}(J_0)/ \ker(\iota_0^{\xi})$ is nonzero, the classification theorem of finitely generated modules over commutative PIDs implies that the generating rank of $P_2/ \left( \ker(\iota_1) \cap P_2\right)$  is $m \geq n=2g$.
\\

{Now we finish the proof that $h \geq g$ by showing that the generating rank of $\ker(\iota_2)/ (\ker(\iota_1) \cap \ker(\iota_2))$ is no more than $2h$.}
Let $P_F:= \ker(H_1^{\phi}(X_K) \to H_1^{\Phi}(X_F))$. By Proposition~\ref{prop:kerneloftwistedH1} applied to $\Delta_1'$ and $F$, we have that $P_F$ is generated as a $\Z[\xi]$-module by $\ker(H_1^{\phi}(X_K) \to H_1^{\Phi}(X_{\Delta_1'}))$ together with some $2h$ elements $x_1, \dots, x_{2h}$.  Here we use that the ring of Eisenstein integers $\Z[\xi]$ is a Euclidean domain and is therefore a PID.
However, by Proposition~\ref{prop:2knotsandtwistedh1} we have that
 \[\ker\big( H_1^{\phi}(X_K) \to H_1^{\Phi}(X_{\Delta_1'})\big)= \ker\big( H_1^{\phi}(X_K) \to H_1^{\Phi}(X_{\Delta_1})\big)= \ker(\iota_1).
\]
{So for any submodule $P$ of $P_F$, the quotient module $P/ (P \cap \ker(\iota_1))$ is isomorphic to a submodule of $P_F/  \ker(\iota_1)$ and hence, by Lemma~\ref{lemma:gen-rank-facts}~(\ref{item:gen-rank-subgroup}), has generating rank at most $2h$.}
But Proposition~\ref{prop:kerneloftwistedH1}  applied to $\Delta_2'$ and $F$ together with the fact that by Proposition~\ref{prop:2knotsandtwistedh1}
 \[\ker\big( H_1^{\phi}(X_K) \to H_1^{\Phi}(X_{\Delta_2'})\big)= \ker\big( H_1^{\phi}(X_K) \to H_1^{\Phi}(X_{\Delta_2})\big)= \ker(\iota_2)
\]
implies that $\ker(\iota_2)$ is contained in $P_F$.
We can therefore conclude as desired that
\[2h \geq \gr \left( \ker(\iota_2)/ \left( \ker(\iota_2) \cap \ker(\iota_1)\right)\right) \geq 2g. \qedhere.\]
\end{proof}

\bibliographystyle{amsalpha}
\def\MR#1{}
\bibliography{research}

\end{document}